\newtheorem{theorem}{Theorem}[section]
\newtheorem{corollary}[theorem]{Corollary}
\newtheorem{prop}[theorem]{Proposition}
\newtheorem{lemma}[theorem]{Lemma}
\theoremstyle{definition}
\newtheorem{rmk}{Remark}
\numberwithin{equation}{section}
\def\neweq#1{\begin{equation}\label{#1}}
\def\endeq{\end{equation}}
\def\ep{\varepsilon}
\def\vphi{\varphi}
\def\la{\lambda}
\def\a{\alpha}
\def\b{\beta}
\def\intl{\int\limits}
\def\eq#1{(\ref{#1})}
\def\R{{\mathbb R} }
\begin{document}


\title{Asymptotic behavior at isolated singularities for solutions of nonlocal semilinear elliptic systems of inequalities}

\author{Marius Ghergu\footnote{School of Mathematical Sciences,
    University College Dublin, Belfield, Dublin 4, Ireland; {\tt
      marius.ghergu@ucd.ie}} $\;\;$ and $\;$ 
Steven D.~Taliaferro\footnote{Mathematics Department, Texas A\&M
    University, College Station, TX 77843-3368; {\tt stalia@math.tamu.edu}} 
\footnote{Corresponding author, Phone 001-979-845-3261, Fax
  001-979-845-6028}
  }





\date{}


\maketitle

\begin{abstract}
We study the behavior near the origin of $C^2$ positive solutions $u(x)$ and $v(x)$ of the system
$$
\left\{
\begin{aligned}
0\leq -\Delta u\leq \left(\frac{1}{|x|^\alpha}* v\right) ^\lambda \\
0\leq -\Delta v\leq\left(\frac{1}{|x|^\beta}* u\right) ^\sigma
\end{aligned}
\quad \mbox{ in }B_2(0)\setminus\{0\}\subset \R^n, n\geq 3,
\right.
$$
where $\lambda,\sigma\geq 0$ and
$\alpha, \beta\in (0,n)$.

A by-product of our methods used to study these solutions will be
results on the behavior near the origin of $L^1(B_1(0))$ solutions $f$
and $g$ of the system
\[\left\{
\begin{aligned}
0\leq f(x)\leq C\left(|x|^{2-\alpha}
+\int_{|y|<1}\frac{ g(y)\,dy}{|x-y|^{\alpha-2}} \right)^\lambda\\
0\leq g(x)\leq C\left(|x|^{2-\beta}
+\int_{|y|<1}\frac{ f(y)\,dy}{|x-y|^{\beta-2}} \right)^\sigma
\end{aligned}
\right.\qquad\mbox{for }0<|x|<1
\]
where $\lambda,\sigma\geq 0$ and
$\alpha, \beta\in (2,n+2)$.
\medskip

2010 Mathematics Subject Classification.  35B09, 35B33,
35B40, 35J47, 35J60, 35J91, 35R45.
\end{abstract}

\tableofcontents

\section{Introduction}

In this paper we study the behavior near the origin of
$C^2(\R^n\setminus\{0\})\cap L^1(\R^n)$ positive solutions $u(x)$
and $v(x)$  of the system
\neweq{nonloc}
\left\{
\begin{aligned}
0\leq -\Delta u\leq \left(\frac{1}{|x|^\alpha}* v\right) ^\lambda \\
0\leq -\Delta v\leq\left(\frac{1}{|x|^\beta}* u\right) ^\sigma
\end{aligned}
\quad \mbox{ in }B_2(0)\setminus\{0\}\subset \R^n, n\geq 3,
\right.
\endeq
where $\lambda, \sigma\geq 0$ and $\alpha, \beta\in (0,n)$.

The goal of this work is to address the following question.
\medskip

\noindent {\bf Question 1}. For which constants $\lambda, \sigma\geq
0$ and $\alpha, \beta\in (0,n)$ do there exist continuous functions
$h_1 ,h_2 :(0,1)\to(0,\infty)$ such that all
$C^2(\R^n\setminus\{0\})\cap L^1(\R^n)$ positive solutions $u(x)$
and $v(x)$ of the system \eqref{nonloc} satisfy
$$
 u(x)={\cal O}(h_1 (|x|)) \quad \text{ as } x\to 0
$$
$$
 v(x)={\cal O}(h_2 (|x|)) \quad \text{ as }x\to 0
$$
and what are the optimal such $h_1$ and $h_2$ when they exist?
\medskip

We call a function $h_1$ (resp. $h_2$) with the above properties a
{\it pointwise bound} for $u$ (resp. $v$) as $x\to0$.

\begin{rmk}\label{rem1} Let $\Gamma\in C^2(\R^n\setminus\{0\})\cap
  L^1(\R^n)$ be a positive function such that $\Gamma(x)=|x|^{-(n-2)}$
  for $0<|x|<2$. Since $-\Delta \Gamma =0$ in $B_2(0)\setminus\{0\}$,
  the functions $u_0 (x)=v_0 (x)=\Gamma(x)$ are always positive
  solutions of \eqref{nonloc}.  Hence, any pointwise bound for positive
  solutions of \eqref{nonloc} must be at least as large as $|x|^{-(n-2)}$ and
  whenever $|x|^{-(n-2)}$ is such a bound for $u$ (resp. $v$) it is
  necessarily optimal. In this case we say that $u$ (resp. $v$) is
  {\it harmonically bounded} at $0$.
\end{rmk}

A first motivation for the study of \eq{nonloc} comes from the
equation
\neweq{prototype}
-\Delta u=\Big(\frac{1}{|x|^\alpha}* u^{p}\Big) |u|^{p-2}u\quad\mbox{ in }\R^n,
\endeq
where $\alpha\in (0,n)$ and $p>1$.
For $n = 3$ and $\alpha=p=2$, equation \eq{prototype} is known in the literature as the {\it Choquard-Pekar equation}  and was introduced in \cite{P1954} as a model in quantum theory of a Polaron at rest (see also \cite{DA2010}). Later, the equation \eq{prototype} appears as a model of an electron trapped in its own hole, in an approximation to Hartree-Fock theory of one-component plasma \cite{L1976}.
More recently,  the same equation \eq{prototype} was used in a model of self-gravitating matter
(see, e.g., \cite{J1995,MPT1998}) and it is known in this context as the {\it Schr\"odinger-Newton equation}.
In the degenerate case $p=1$,  equation \eq{prototype} becomes the prototype for our system \eq{nonloc}.

Another motivation for the study of \eq{nonloc} is given by various integral equations that have been recently investigated. For instance, the system
\neweq{syst}
\left\{
\begin{aligned}
u(x)  = \left(\;\int_{\R^n} \frac{v(y)}{|x-y|^{\alpha}} dy\right)^\lambda  \\
 v(x)=\left(\;\int_{\R^n} \frac{u (y)}{|x-y|^{\beta}} dy\right)^\sigma
\end{aligned}
\qquad \mbox{ in }\R^n, \,n\geq 3,
\right.
\endeq
and its more general forms appear in \cite{CL2005, CL2009, JL2006, Leia2013, Leib2013, LLM2012}.
These works are mainly concerned with radial symmetry, monotonicity or regularity of solutions.

As emphasized in \cite{JL2006, LLM2012}, the system \eq{syst} is related to the Hardy-Littlewood-Sobolev inequality

\neweq{hls}
\left|\;\int_{\R^n}\int_{\R^n}\frac{f(x)g(y)}{|x-y|^\theta}dxdy\right|\leq C(n)\|f\|_p\|g\|_q,
\endeq
where $p,q>1$ and $\theta=(2-1/p-1/q)n$.

In order to find the best constant in \eq{hls} one has to find
$$
J:=\min_{\|f\|_p=1\,,\;\|g\|_q=1} \int_{\R^n}\int_{\R^n}\frac{f(x)g(y)}{|x-y|^\theta}dxdy
$$
and this leads to the Euler-Lagrange equations
\neweq{systt}
\left\{
\begin{aligned}
f(x)  = \left(\frac{1}{J}\int_{\R^n} \frac{g(y)}{|x-y|^{\theta}} dy\right)^\lambda  \\
 g(x)=\left(\frac{1}{J}\int_{\R^n} \frac{f (y)}{|x-y|^{\theta}} dy\right)^\sigma
\end{aligned}
\qquad \mbox{ in }\R^n,
\right.
\endeq
where $\lambda=1/(p-1)>0$ and $\sigma=1/(q-1)>0$.

We point out that \eq{nonloc} has a similar structure to
\eq{systt}. Indeed, by the well known result of Brezis and Lions
\cite{BL1981} (see Lemma \ref{brezislions} below) regarding the
representation of nonnegative superharmonic functions in the punctured
ball,  positive solutions $u$ and $v$ of \eq{nonloc} satisfy
$$
-\Delta u,-\Delta v\in L^1(B_1(0))
$$
and
\begin{equation*}
\left\{
\begin{aligned}
&u(x)\leq C\left(|x|^{2-n}+\int_{|y|<1}\frac{-\Delta
    u(y)\,dy}{|x-y|^{n-2}} \right) \\
&v(x)\leq C\left(|x|^{2-n}+\int_{|y|<1}\frac{-\Delta
    v(y)\,dy}{|x-y|^{n-2}} \right)
\end{aligned}
\quad \mbox{ for } 0<|x|<1.
\right.
\end{equation*}
Substituting these estimates in \eqref{nonloc} and using Lemma
\ref{uv} and Corollary \ref{corstein} below, we find for $\alpha,\beta\in
(2,n)$ that \neweq{hls_syst} \left\{
\begin{aligned}
0\leq f(x)\leq M\left(|x|^{2-\alpha}
+\int_{|y|<1}\frac{ g(y)\,dy}{|x-y|^{\alpha-2}} \right)^\lambda\\
0\leq g(x)\leq M\left(|x|^{2-\beta}
+\int_{|y|<1}\frac{ f(y)\,dy}{|x-y|^{\beta-2}} \right)^\sigma
\end{aligned}
\right.\qquad\mbox{for }0<|x|<1,
\endeq
where $f=-\Delta u$, $g=-\Delta v$ are $C(\R^n\setminus\{0\})\cap
L^1(B_1(0))$ functions and $M$ is a positive constant.

A by-product of our methods used to study solutions of
\eqref{nonloc} will be results on the behavior near the origin of
$L^1(B_1(0))$ solutions $f$ and $g$ of \eqref{hls_syst} when
$\la,\sigma\ge 0$ and $\a,\b\in(2,n+2)$.

Before we state the main results for \eq{nonloc} let us mention the
following system which we considered in \cite{GTV2014}:
\neweq{recent}
\left\{
\begin{aligned}
0\leq -\Delta u\leq v^\lambda \\
0\leq -\Delta v\leq u^\sigma
\end{aligned}
\quad \mbox{ in }B_1(0)\setminus\{0\}\subset \R^n, n\geq 3, \right.
\endeq
where $\lambda, \sigma\geq 0$. In \cite{GTV2014} we emphasized the
existence of a critical curve in the $\lambda\sigma$-plane that
optimally describes the existence of pointwise bounds for
\eq{recent}. A particular feature of \eq{recent} is that whenever
pointwise bounds exist, then at least one of $u$ and $v$ must be
harmonically bounded. We shall see that this is not always the case
when dealing with the nonlocal system \eq{nonloc}. Theorems
\ref{thm4B} and \ref{optimalThm2} below illustrate such a phenomenon
which we believe is due to the more complex character of \eq{nonloc}
that involves four parameters $\alpha, \beta, \lambda, \sigma$
(instead of two parameters in the case of \eq{recent}).

Since positive solutions $u$ and $v$ of the system of inequalities 
\eqref{nonloc} (resp. \eqref{recent}) are also solutions of the
system of equations
\begin{equation}\label{nonloc-eq}
\left\{
\begin{aligned}
-\Delta u=\left(\frac{1}{|x|^\alpha}* v\right) ^\lambda \\
-\Delta v=\left(\frac{1}{|x|^\beta}* u\right) ^\sigma
\end{aligned}\right.
\quad \mbox{ in }B_2(0)\setminus\{0\}\subset \R^n, n\geq 3,
\end{equation}
\begin{equation}\label{recent-eq}
\left(\text{resp. }\left\{\begin{aligned}
-\Delta u=v^\lambda \\
-\Delta v=u^\sigma
\end{aligned}\right.
\quad \mbox{ in }B_1(0)\setminus\{0\}\subset \R^n, n\geq 3, \right)
\end{equation}
our pointwise bounds at 0 for solutions of the systems \eqref{nonloc} and
\eqref{recent} also hold for
solutions of the systems \eqref{nonloc-eq} and \eqref{recent-eq}
respectively. Such bounds are often a first step for obtaining more
precise asymptotic behavior at 0  of positive solutions of
systems \eqref{nonloc-eq} and \eqref{recent-eq}
and nonexistence of entire solutions.
The system \eqref{recent-eq} has been studied extensively. See for
example \cite{BR1996} and \cite{S2009}.

\section{Statement of the main results}
\subsection{Results for system \eqref{nonloc}}\label{pde}
We first consider the case that either $\alpha$ or $\beta$ belongs to
the interval $(0,2]$. We can assume without loss of generality
that $\beta\in (0,2]$.

\begin{theorem}\label{thm1}
Suppose
\[
\alpha\in (0,n),\quad \beta\in (0,2],\quad \text{and} \quad
\lambda,\sigma\ge 0.
\]
Let $u$ and $v$ be $C^2(\R^n\setminus\{0\})\cap
L^1(\R^n)$ positive solutions of \eqref{nonloc}. Then
\neweq{thm1estu}
u(x)=
\left\{
\begin{aligned}
& {\mathcal O}(|x|^{-(n-2)}) &\quad\mbox{ if }n\geq \lambda(\alpha-2),\\
&o\Big(|x|^{-\frac{\lambda(\alpha-2)(n-2)}{n}}\Big) &\quad\mbox{ if } n< \lambda(\alpha-2),
\end{aligned}
\right. \quad\mbox{ as }x\to 0,
\endeq
and
\neweq{thm1estv}
v(x)={\mathcal O}(|x|^{-(n-2)}) \quad\mbox{ as }x\to 0.
\endeq
\end{theorem}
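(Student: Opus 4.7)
The plan is to bound $v$ first and then, using that bound, to estimate $\bigl(|y|^{-\alpha}*v\bigr)$ and thereby bound $u$. The ubiquitous tool is the Brezis-Lions representation (Lemma \ref{brezislions}), which gives
\[
u(x)\le C\Big(|x|^{2-n}+\int_{|y|<1}\frac{-\Delta u(y)}{|x-y|^{n-2}}\,dy\Big)
\]
and the analogous inequality for $v$, together with $-\Delta u,-\Delta v\in L^1(B_1(0))$.

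To prove $v(x)=O(|x|^{2-n})$, I would insert the Brezis-Lions representation of $u$ into the right-hand side of the $v$-inequality in \eqref{nonloc} and reduce the resulting double Riesz convolution via Lemma \ref{uv} and Corollary \ref{corstein}. The hypothesis $\beta\le 2$ is the crucial point: the exponents in each step sum to at most $n$, so the Stein-type bounds yield at worst a power-of-log singularity for $\bigl(|y|^{-\beta}*u\bigr)$ at $0$ and hence for $-\Delta v$. A second pass through Brezis-Lions shows that the associated Newton potential is bounded near $0$, leaving $v(x)\le C|x|^{2-n}+O(1)$, which is \eqref{thm1estv}. Substituting now $v(y)\le C|y|^{2-n}$ into $\bigl(|y|^{-\alpha}*v\bigr)(x)$ and applying Corollary \ref{corstein} gives a bound that is constant if $\alpha\le 2$ and $C|x|^{2-\alpha}$ if $\alpha\in(2,n)$; in either case
\[
-\Delta u(y)\le C|y|^{-\lambda(\alpha-2)}\quad\text{near }0,
\]
where a nonpositive exponent is read as a bounded majorant. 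When $\lambda(\alpha-2)\le n$ this majorant is locally integrable (with at worst an extra logarithm in the equality case), and one last use of Corollary \ref{corstein} produces $u(x)\le C\bigl(|x|^{2-n}+|x|^{2-\lambda(\alpha-2)}\bigr)=O(|x|^{2-n})$ because $2-\lambda(\alpha-2)\ge 2-n$; this handles the first branch of \eqref{thm1estu}.

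The delicate case is $\lambda(\alpha-2)>n$, where the pointwise majorant for $-\Delta u$ is not locally integrable even though $-\Delta u\in L^1(B_1(0))$, so the pointwise and integral pieces of information are in genuine tension. Here I would interpolate via a Hedberg-type pointwise estimate for the Riesz potential: setting $f=-\Delta u$, for each $\delta>0$,
\[
\int_{|y|<\delta}\frac{f(y)}{|x-y|^{n-2}}\,dy\le C\bigl(Mf(x)\bigr)^{(n-2)/n}\|f\chi_{\{|y|<\delta\}}\|_{L^1}^{2/n},
\]
obtained by splitting the integral at $\{|y-x|<r\}$ versus its complement and optimising in $r$. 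Since $\lambda(\alpha-2)>n$, the pointwise bound on $f$ dominates the $L^1$-averaged one in computing the Hardy-Littlewood maximal function and yields $Mf(x)\le C|x|^{-\lambda(\alpha-2)}$; substituting produces exactly the exponent $\lambda(\alpha-2)(n-2)/n$ appearing in \eqref{thm1estu}.

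To upgrade $O$ to $o$, I would first let $x\to 0$ with $\delta>0$ fixed: the contribution of $B_1(0)\setminus B_\delta(0)$ is $O(1)$ by the crude bound $|x-y|\ge\delta/2$ and is therefore $o\bigl(|x|^{-\lambda(\alpha-2)(n-2)/n}\bigr)$, as is $|x|^{2-n}$ itself. Then letting $\delta\to 0$ exploits the absolute continuity $\|f\chi_{\{|y|<\delta\}}\|_{L^1}\to 0$, which makes the coefficient of $|x|^{-\lambda(\alpha-2)(n-2)/n}$ arbitrarily small. The main technical obstacle is precisely this interpolation step: one must track carefully the rival contributions to $Mf(x)$ coming from the singular pointwise bound and from the $L^1$ normalisation, and verify that they balance to produce the stated exponent rather than something worse.
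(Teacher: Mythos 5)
Your overall architecture coincides with the paper's: first show $v(x)=O(|x|^{2-n})$ by feeding the Brezis--Lions representation of $u$ into the second inequality of \eqref{nonloc} and invoking Corollary \ref{corstein} (this is the content of \eqref{combine} in Lemma \ref{uv}), then deduce the pointwise bound $-\Delta u(x)\le C|x|^{-\lambda(\alpha-2)}$ for $\alpha>2$ and convert it into a bound on $u$. Where you genuinely diverge is in the conversion step when $\lambda(\alpha-2)>n$. The paper's Lemma \ref{pastgeneral} (via Corollary \ref{past}) resolves the tension between the non-integrable pointwise majorant and the $L^1$ information by a rearrangement argument: near a point $x_j$ it replaces $-\Delta u$ by the constant $C|x_j|^{-\gamma}$ on a ball $B_{r_j}(x_j)$ carrying the same mass, observes that $r_j=o(|x_j|^{\gamma/n})$ by absolute continuity of the integral, and bounds the Newtonian potential by $Cr_j^{2}|x_j|^{-\gamma}=o(|x_j|^{-\gamma(n-2)/n})$. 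You instead use the maximal-function form of Hedberg's inequality, $I_2f(x)\le C\,(Mf(x))^{(n-2)/n}\|f\|_1^{2/n}$, together with the (correct) observation that $\gamma>n$ forces $Mf(x)\le C|x|^{-\gamma}$, and you extract the little-$o$ from the absolute continuity of $\|f\chi_{B_\delta}\|_{L^1}$. Both arguments interpolate between the same two pieces of information and produce the same exponent; yours is the pointwise version of the inequality \eqref{hedberg} that the paper already uses in Proposition \ref{cor_nonlin}, while the paper's rearrangement argument has the advantage of applying verbatim to Riesz kernels of arbitrary order, which is how it is reused in Theorem \ref{crucial}.

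Two points in your sketch need repair. First, for $\beta=2$ the claim that $\bigl(|y|^{-\beta}*u\bigr)$ has ``at worst a power-of-log singularity'' is not a valid pointwise statement: the bound is $C\bigl[\ln\frac{4}{|x|}+\int_{|z|<1}\ln\frac{4}{|x-z|}(-\Delta u)(z)\,dz\bigr]$, and the second term, being an $L^1$-convolution with a log kernel, can be unbounded away from the origin. Since $-\Delta v$ is the $\sigma$-th power of this expression, to bound its Newtonian potential one must first move the exponent $\sigma$ inside the $z$-integral via Jensen's inequality and only then apply Lemma \ref{int2}; this is exactly the paper's route. Second, in the first branch of \eqref{thm1estu} the borderline case $\lambda(\alpha-2)=n$ cannot be handled by direct substitution of the majorant, since $|y|^{-n}$ is not locally integrable; there you must again invoke your interpolation (which does give $o(|x|^{-(n-2)})$ when $\gamma=n$) or Lemma \ref{pastgeneral}. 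With these two repairs the argument is complete.
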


By Remark \ref{rem1} the estimate \eqref{thm1estv} and
the first estimate in \eqref{thm1estu} are optimal. By the following
theorem, the second estimate in \eqref{thm1estu} is also optimal.

\begin{theorem}\label{optimalthm1}
Suppose
$$
0<\beta\leq 2<\alpha<n\quad\mbox{ and }\quad\lambda>\frac{n}{\alpha-2}.
$$
Let $h:(0,1)\to (0,1)$ be a continuous function satisfying
$\lim_{t\to 0^+}h(t)=0$. Then there exist
$C^\infty(\R^n\setminus\{0\})\cap L^1(\R^n)$ positive solutions  $u$ and $v$ of
\eq{nonloc} such that
\neweq{optimalesti1} u(x)\neq {\mathcal
  O}\Big(h(|x|) |x|^{-\frac{\lambda(\alpha-2)(n-2)}{n}}\Big)
\quad\mbox{ as }x\to 0
\endeq
and
\neweq{optimalesti2}
v(x)|x|^{n-2}\to 1 \quad\mbox{ as }x\to 0.
\endeq
\end{theorem}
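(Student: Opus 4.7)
The plan is to saturate \eqref{nonloc} by taking $v$ to be essentially the fundamental solution and building $u$ from a sequence of concentrated bumps of $-\Delta u$ placed along a sequence $x_j\to 0$. I will take $v$ to be a positive $C^\infty(\R^n\setminus\{0\})\cap L^1(\R^n)$ function that agrees with $|x|^{-(n-2)}$ on $B_2\setminus\{0\}$ (a smooth version of the $\Gamma$ in Remark~\ref{rem1}). Then $-\Delta v\equiv 0$ on $B_2\setminus\{0\}$, the second inequality in \eqref{nonloc} is automatic, and \eqref{optimalesti2} holds. The change of variables $y=|x|z$ in $\int_{B_1}|x-y|^{-\alpha}|y|^{-(n-2)}\,dy$, using $\alpha\in(2,n)$, produces a constant $c_0>0$ with
$$
w(x):=(|x|^{-\alpha}*v)(x)\ge c_0|x|^{-(\alpha-2)}\qquad\text{for }0<|x|<1,
$$
which is the ``budget'' for $-\Delta u$ in \eqref{nonloc}.

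Fix a unit vector $e\in\R^n$ and pick $\rho_j\searrow 0$ with $\rho_{j+1}<\rho_j/4$ and $h(\rho_j)\le 2^{-j}$ (possible since $h(t)\to 0$). Set $x_j:=\rho_j e$ and
$$
r_j:=h(\rho_j)^{1/4}\rho_j^{\lambda(\alpha-2)/n},\qquad M_j:=\kappa\rho_j^{-\lambda(\alpha-2)},\qquad \kappa:=c_0^\lambda 2^{-\lambda(\alpha-2)}.
$$
The hypothesis $\lambda(\alpha-2)/n>1$ combined with $h(\rho_j)\to 0$ forces $r_j/\rho_j\to 0$, so the balls $B_{r_j}(x_j)$ may be taken pairwise disjoint and contained in $B_2$. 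Choose $\phi_j\in C_c^\infty(B_{r_j}(x_j))$ with $0\le\phi_j\le 1$ and $\phi_j\equiv 1$ on $B_{r_j/2}(x_j)$, and set $f:=\sum_j M_j\phi_j$. On $\operatorname{supp}\phi_j$ we have $|x|\le 2\rho_j$, hence $w(x)^\lambda\ge c_0^\lambda(2\rho_j)^{-\lambda(\alpha-2)}=M_j$, giving $0\le f\le w^\lambda$ on $B_2\setminus\{0\}$.

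The total mass satisfies $\|f\|_1\le \kappa|B_1|\sum_j h(\rho_j)^{n/4}<\infty$, so the Newtonian potential $u_P(x)=c_n\int f(y)|x-y|^{-(n-2)}\,dy$ is well-defined and positive on $\R^n\setminus\{0\}$, belongs to $C^\infty(\R^n\setminus\{0\})$ (near any $x\ne 0$ only finitely many bumps contribute, and the tail converges uniformly with all derivatives on compacts), and satisfies $-\Delta u_P=f$ classically there. Let $\eta\in C_c^\infty(\R^n)$ equal $1$ on $B_2$, and define $u:=\eta u_P+\varepsilon v$ with $\varepsilon>0$ small. Then $u$ is positive, lies in $C^\infty(\R^n\setminus\{0\})\cap L^1(\R^n)$, and since $\eta\equiv 1$ and $v$ is harmonic on $B_2\setminus\{0\}$, one has $-\Delta u=f\le w^\lambda$ there, so \eqref{nonloc} is satisfied. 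Integrating the bump at $x_j$ yields
$$
u(x_j)\ge c_n\int_{B_{r_j/2}(x_j)}\frac{M_j\,dy}{|x_j-y|^{n-2}}\ge c M_j r_j^2 = c\kappa\, h(\rho_j)^{1/2}\rho_j^{-\lambda(\alpha-2)(n-2)/n},
$$
so dividing by $h(\rho_j)\rho_j^{-\lambda(\alpha-2)(n-2)/n}$ gives a multiple of $h(\rho_j)^{-1/2}\to\infty$, proving \eqref{optimalesti1}.

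The main obstacle is the triple balance dictating $r_j$: it must be small enough that $M_j\le w^\lambda$ holds pointwise (which pins $M_j\asymp\rho_j^{-\lambda(\alpha-2)}$), small enough for disjoint supports and summability of $\sum M_j r_j^n$ (so that $u\in L^1$), yet large enough for $M_j r_j^2$ to beat the target $h(\rho_j)\rho_j^{-\lambda(\alpha-2)(n-2)/n}$. The exponent $\lambda(\alpha-2)/n$ in $r_j$ is the unique one equating $M_j r_j^2$ with $\rho_j^{-\lambda(\alpha-2)(n-2)/n}$ up to a slowly varying factor, and the hypothesis $\lambda(\alpha-2)/n>1$ is precisely what permits $r_j\ll\rho_j$ — without it the construction collapses, matching the fact that Theorem~\ref{thm1} gives no nontrivial bound in this regime.
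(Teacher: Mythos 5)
Your proof is correct and follows essentially the same strategy as the paper's: take $v$ to be the fundamental solution (so the second inequality and \eqref{optimalesti2} are immediate), and build $u$ so that $-\Delta u$ consists of bumps of height $\asymp |x_j|^{-\lambda(\alpha-2)}$ supported on balls of radius $r_j\asymp |x_j|^{\lambda(\alpha-2)/n}$ centered at a sequence tending to $0$, which saturates the first inequality and makes $u(x_j)\gtrsim h(|x_j|)^{1/2}|x_j|^{-\lambda(\alpha-2)(n-2)/n}$. The only differences are cosmetic: the paper delegates the bump construction to Lemma \ref{optimal} (quoted from \cite{GTV2014}) and places the vanishing factor $\sqrt{h}$ in the amplitude $\varphi(|x_j|)/r_j^n$ rather than in the radius, whereas you build $u$ explicitly as a Newtonian potential with the factor $h^{1/4}$ absorbed into $r_j$; both yield the same divergent ratio $\sim h(|x_j|)^{-1/2}$ at $x_j$.
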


Note that, according to Theorem \ref{thm1}, if $\alpha,\beta\in (0,2]$
then all positive solutions $u$ and $v$ of \eqref{nonloc} are
harmonically bounded, that is
\begin{equation}\label{asyhar}
u(x)={\cal O}(|x|^{-(n-2)})\quad \text{and}\quad
v(x)={\cal O}(|x|^{-(n-2)})\quad\text{as }x\to 0,
\end{equation}
regardless of the size of the exponents $\lambda$ and $\sigma$.

We next consider the case that
$\alpha,\beta\in (2,n)$. In this setting the study of the asymptotic
behavior is more delicate and it involves all parameters $\alpha,
\beta,\lambda$ and $\sigma$.
We can assume without loss of generality that
\begin{equation}\label{quadrant}
0\le(\beta-2)\sigma\le (\alpha-2)\lambda.
\end{equation}

Let $\alpha,\beta\in (2,n)$ be fixed constants. If $\lambda$ and
$\sigma$ satisfy \eqref{quadrant} then $(\lambda,\sigma)$ belongs to
one of the following five pairwise disjoint subsets of the
$\lambda\sigma$-plane.
\begin{align*}
 &A:=\left\{ (\lambda ,\sigma): \, 0\le \lambda\le \frac{n}{\alpha-2}
   \quad\text{and}\quad
0\leq \sigma
\leq \frac{\alpha-2}{\beta-2}\lambda \right\}
\setminus \left\{\left(\frac{n}{\alpha-2},\frac{n}{\beta-2}\right)\right\}\\
 &B:=\left\{ (\lambda ,\sigma): \, \lambda>\frac{n}{\alpha-2}
   \quad\text{and}
\quad
0\le\sigma\le \frac{2}{\beta-2}
+\frac{n(n-2)}{(\alpha-2)(\beta-2)} \frac{1}{\lambda} \right\}\\
&C:=\left\{ (\lambda ,\sigma): \, \lambda>\frac{n}{\alpha-2}
   \quad\text{and}
\quad
\frac{2}{\beta-2}+\frac{n(n-2)}{(\alpha-2)(\beta-2)} \frac{1}{\lambda}
<\sigma<\frac{n+2-\alpha}{\beta-2}
+\frac{n}{\beta-2} \frac{1}{\lambda} \right\} \\
&D:=\left\{ (\lambda ,\sigma): \, \lambda>\frac{n}{\alpha-2} \quad
   \text{and}\quad \frac{n+2-\alpha}{\beta-2}+\frac{n}{\beta-2} \frac{1}{\lambda}<\sigma \leq \frac{\alpha-2}{\beta-2}\lambda \right\} \\
 &E:=\left\{ (\lambda ,\sigma): \, \lambda\ge\frac{n}{\alpha-2} \quad
   \text{and}\quad \sigma=\frac{n+2-\alpha}{\beta-2}+\frac{n}{\beta-2} \frac{1}{\lambda} \right\}.
\end{align*}

\begin{figure}[H]
 \includegraphics[scale=.55]{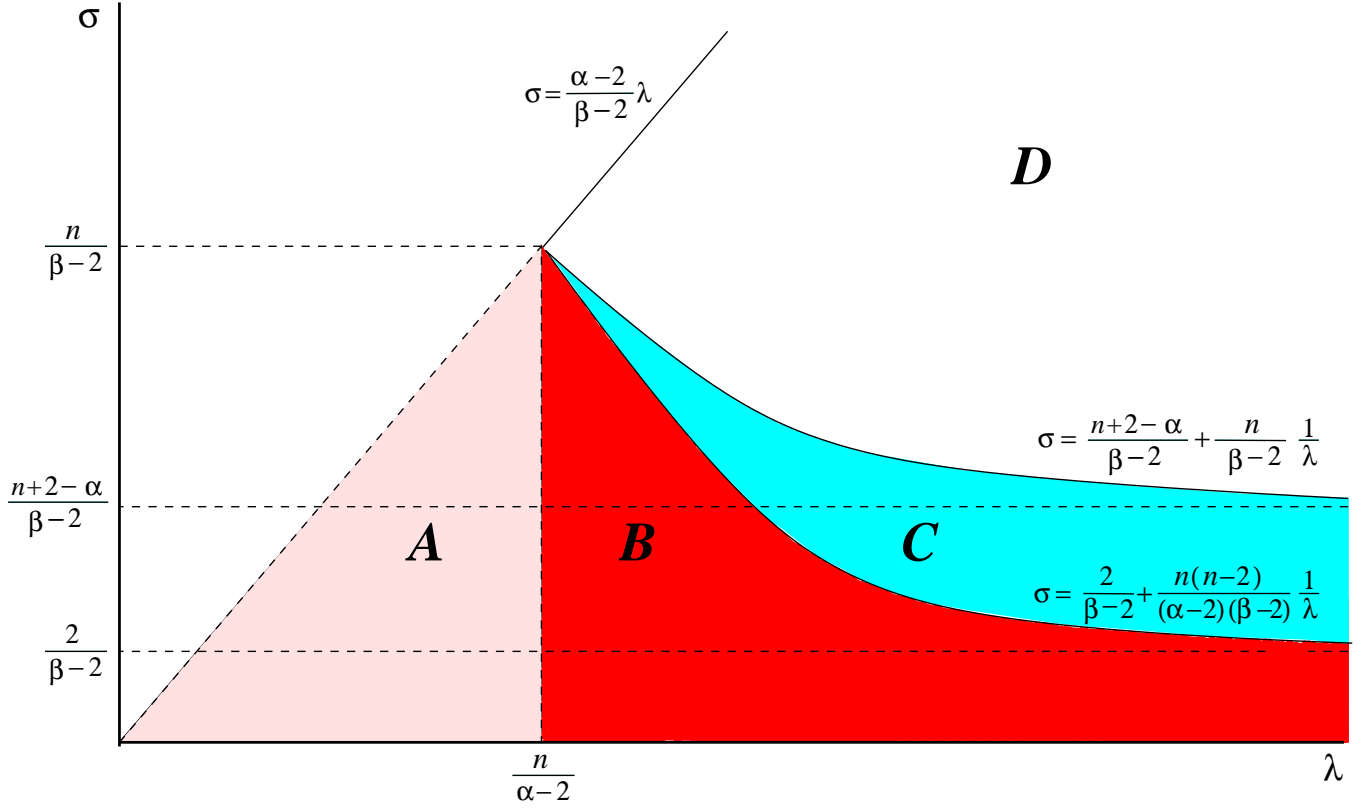}
 \caption{Graph of regions $A$, $B$, $C$ and $D$.}
\end{figure}

Note that $A$, $B$, $C$, and $D$ are two dimensional regions in the $\lambda
\sigma$-plane whereas $E$ is the curve separating $C$ and $D$. (See
Figure 1.)

The following theorem deals with the case that $(\lambda,\sigma)\in A$.

\begin{theorem}\label{thm3}
Suppose $\alpha,\beta\in (2,n)$,
\begin{equation}\label{assumption}
0 \le \lambda\leq\frac{n}{\alpha-2}
\quad \text{and} \quad 0\le \sigma< \frac{n}{\beta-2}.
\end{equation}
Let $u$ and $v$ be $C^2(\R^n\setminus\{0\})\cap L^1(\R^n)$ positive
solutions of \eq{nonloc}. Then $u$ and $v$ are both harmonically bounded,
that is $u$ and $v$ satisfy \eqref{asyhar}.
\end{theorem}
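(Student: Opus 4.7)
The plan is to convert the differential problem \eqref{nonloc} into the integral system \eqref{hls_syst} for the sources $f:=-\Delta u$ and $g:=-\Delta v$, then bootstrap the integrability of $f$ and $g$ until their Newtonian potentials can be majorized by $|x|^{2-n}$, and finally read off the harmonic bounds for $u$ and $v$ from the Brezis--Lions representation.

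First I would apply Lemma~\ref{brezislions} to the nonnegative superharmonic functions $u$ and $v$ to obtain $f,g\in L^1(B_1(0))$ together with
$$u(x)\le C\!\left(|x|^{2-n}+\int_{|y|<1}\frac{f(y)\,dy}{|x-y|^{n-2}}\right),\qquad v(x)\le C\!\left(|x|^{2-n}+\int_{|y|<1}\frac{g(y)\,dy}{|x-y|^{n-2}}\right)$$
for $0<|x|<1$. Feeding these pointwise estimates back into the right-hand sides of \eqref{nonloc} and evaluating the resulting double convolutions via Lemma~\ref{uv} and Corollary~\ref{corstein} (which reduce $|x|^{-\alpha}*(|y|^{2-n})$ and $|x|^{-\alpha}*(\,|\cdot|^{2-n}*g)$ to $|x|^{2-\alpha}$ and an $(\alpha-2)$-Riesz potential of $g$, respectively) yields exactly the integral inequalities \eqref{hls_syst}. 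From this point on the argument runs purely on \eqref{hls_syst}.

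Next I would bootstrap using weak-type (Hardy--Littlewood--Sobolev) estimates for the Riesz potentials $\int g(y)/|x-y|^{\alpha-2}dy$ and $\int f(y)/|x-y|^{\beta-2}dy$. Starting from $f,g\in L^1$, the kernels $|x-y|^{-(\alpha-2)}$ and $|x-y|^{-(\beta-2)}$ are locally in weak-$L^{n/(\alpha-2)}$ and weak-$L^{n/(\beta-2)}$ respectively (since $\alpha-2,\beta-2<n$). Because $\lambda\le n/(\alpha-2)$ and $\sigma<n/(\beta-2)$, raising these potentials to the powers $\lambda$ and $\sigma$ still yields local membership in some $L^{p,\infty}$ with $p>1$, and the inhomogeneous terms $|x|^{2-\alpha},|x|^{2-\beta}$ fit in the same spaces. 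Hence \eqref{hls_syst} gives an integrability gain for $f$ and $g$. Iterating this step a finite number of times (each iteration strictly improving the Lebesgue exponents) eventually places $f$ and $g$ in some $L^p_{\mathrm{loc}}$ with $p>n/2$. At that point, splitting the domain of the Newtonian integral into $\{|y|<|x|/2\}$, $\{|x|/2<|y|<2|x|\}$ and $\{|y|>2|x|\}$ and applying H\"older in each piece shows that $\int f(y)|x-y|^{2-n}dy=O(|x|^{2-n})$ as $x\to 0$, and likewise for $g$; substituting these bounds into the Brezis--Lions representation of Step~1 produces \eqref{asyhar}.

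The main obstacle is the bookkeeping for the bootstrap. The two inequalities in \eqref{hls_syst} couple the integrabilities of $f$ and $g$, so one must design a pair of sequences of Lebesgue exponents along which both functions gain integrability simultaneously, all the while checking that the $|x|^{2-\alpha}$ and $|x|^{2-\beta}$ inhomogeneities do not spoil the gains. The subcritical hypotheses $\lambda\le n/(\alpha-2)$ and $\sigma<n/(\beta-2)$ must be used quantitatively to guarantee a fixed positive improvement at each stage: it is precisely because the endpoint corner $(\lambda,\sigma)=(n/(\alpha-2),n/(\beta-2))$ is excluded that the gain does not degenerate to zero, which is what makes the iteration terminate in finitely many steps.
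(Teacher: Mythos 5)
Your first step — reducing to the integral system \eqref{hls_syst} via Lemma~\ref{brezislions}, Lemma~\ref{uv}, and Corollary~\ref{corstein} — matches the paper exactly. The genuine gap is in the bootstrap you propose for $f$ and $g$. You claim that iterating Riesz-potential estimates on the inequalities \eqref{hls_syst} eventually places $f$ and $g$ in $L^p_{\mathrm{loc}}$ with $p>n/2$. This cannot work in general under the stated hypotheses, because the inhomogeneous term $|x|^{2-\alpha}$, raised to the power $\lambda$, contributes $|x|^{-\lambda(\alpha-2)}$ to the bound on $f$, and this function lies in $L^p(B_1)$ only for $p<n/(\lambda(\alpha-2))$. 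Since the hypothesis allows $\lambda$ all the way up to $n/(\alpha-2)$, this exponent can be arbitrarily close to $1$ (and at the endpoint $\lambda=n/(\alpha-2)$ the term $|x|^{-n}$ is not even locally integrable, so it does not "fit in the same space" as $f\in L^1$). In particular, for $\lambda>2/(\alpha-2)$ no amount of iteration can produce $f\in L^{n/2}_{\mathrm{loc}}$, so the final H\"older argument for the Newtonian potential never becomes available.

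The paper avoids this obstruction by not trying to improve global $L^p$ regularity of $f$ at all. Instead, Theorem~\ref{crucial} establishes the sharp \emph{pointwise} rate $f(x)={\cal O}(|x|^{-\lambda(\alpha-2)})$ together with the potential estimates \eqref{e17}--\eqref{e18}. This is done by working with the rescaled sequences $f_j(\xi)=r_j^n f(x_j+r_j\xi)$, $g_j(\zeta)=r_j^n g(x_j+r_j\zeta)$ (with $r_j=|x_j|/4$), whose $L^1$ norms tend to zero; the iteration of Lemma~\ref{aux} then runs on the weighted sequences $\{r_j^\gamma f_j\}$ in $L^p(B_R(0))$ with a fixed positive gain $1/p-1/q>C_0$ obtained from the $\varepsilon$-slack in \eqref{conditionepsilon}. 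Once the pointwise bound $f(x)={\cal O}(|x|^{-\lambda(\alpha-2)})$ is in hand, Lemma~\ref{pastgeneral} (essentially the three-region splitting you describe) immediately gives \eqref{e17} with $s=n$, and then \eqref{B-L} yields the harmonic bound for $u$; the harmonic bound for $v$ follows similarly from \eqref{e18}. So your intuition about where the strict inequality $\sigma<n/(\beta-2)$ is needed is correct, but the object you should bootstrap is the scaled pointwise decay rate, not a global Lebesgue exponent; the latter is obstructed by the inhomogeneity.
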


By Remark \ref{rem1} the bounds \eqref{asyhar} for $u$ and $v$ in
Theorem \ref{thm3} are optimal.

The following theorem deals with the case that
$(\lambda,\sigma)\in B$.

\begin{theorem}\label{thm4A}
Suppose $\alpha,\beta\in (2,n)$,
\[
\lambda>\frac{n}{\alpha-2}\quad\mbox{ and }\quad
0\leq \sigma \le \frac{2}{\beta-2}+\frac{n(n-2)}{(\alpha-2)(\beta-2)} \frac{1}{\lambda}.
\]
Let $u$ and $v$ be $C^2(\R^n\setminus\{0\})\cap L^1(\R^n)$ positive
solutions of \eq{nonloc}. Then
\[
u(x)=o\Big(|x|^{-\frac{\lambda(\alpha-2)(n-2)}{n}}\Big) \quad\mbox{ as }x\to 0
\]
and $v$ is harmonically bounded, that is
\[
v(x)={\mathcal O}(|x|^{-(n-2)})\quad\mbox{ as }x\to 0.
\]
\end{theorem}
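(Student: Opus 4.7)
My plan is to reduce the PDE system \eqref{nonloc} to the integral inequality system \eqref{hls_syst} and then carry out all of the analysis there, as the paper's setup already suggests. Set $f:=-\Delta u\ge 0$ and $g:=-\Delta v\ge 0$. By Lemma \ref{brezislions}, $f,g\in L^1(B_1(0))$ and one has the Brezis--Lions representations
\[
u(x)\le C\left(|x|^{2-n}+\int_{|y|<1}\frac{f(y)\,dy}{|x-y|^{n-2}}\right),\qquad
v(x)\le C\left(|x|^{2-n}+\int_{|y|<1}\frac{g(y)\,dy}{|x-y|^{n-2}}\right).
\]
Substituting these into the right-hand sides of \eqref{nonloc} and handling the resulting iterated convolutions via Lemma \ref{uv} and Corollary \ref{corstein} (which uses $\alpha,\beta\in(2,n)$), one obtains that $f,g$ satisfy \eqref{hls_syst} with the present parameters. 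From this point on the task is purely one about the integral system.

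The next step is to show that $v$ is harmonically bounded. Since the harmonic part in the Brezis--Lions decomposition already contributes only $O(|x|^{2-n})$, it suffices to bound the Newton potential of $g$ by $O(|x|^{2-n})$. I would insert the inequality for $g$ from \eqref{hls_syst} into this Newton potential and estimate the resulting double convolution by decomposing $B_1(0)$ into the standard three regions around $0$ and around $x$ (ball around $0$ of radius $|x|/2$, ball around $x$ of radius $|x|/2$, and the remaining annular piece), bounding the kernel $|x-y|^{n-2}$ by $|x|^{n-2}$ or by $|y|^{n-2}$ on the appropriate pieces. The defining condition of region $B$, namely
\[
\sigma\le \frac{2}{\beta-2}+\frac{n(n-2)}{(\alpha-2)(\beta-2)\lambda},
\]
is exactly the threshold that forces each piece of this estimate to be no worse than $|x|^{2-n}$; in fact the boundary case $\sigma=\tfrac{2}{\beta-2}+\tfrac{n(n-2)}{(\alpha-2)(\beta-2)\lambda}$ saturates the estimate and separates region $B$ from region $C$ in Figure~1.

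With the harmonic bound on $v$ in hand, the pointwise bound $v(y)\le C|y|^{2-n}$ together with a convolution estimate of the type in Lemma~\ref{uv} yields $\left(\tfrac{1}{|x|^\alpha}*v\right)(x)\le C|x|^{2-\alpha}$ near $0$, whence
\[
f(x)\le C|x|^{-\lambda(\alpha-2)}\qquad\text{for small }|x|.
\]
Here lies the main obstacle: since we are in region $B$ we have $\lambda(\alpha-2)>n$, so this pointwise majorant is \emph{not} locally integrable near the origin, and the Newton potential $\int f(y)|x-y|^{2-n}\,dy$ cannot be estimated by naively substituting it. The resolution is to couple the pointwise bound with the global information $f\in L^1(B_1)$ through an interpolation/``spike'' estimate for Riesz potentials of the kind developed in the earlier sections of the paper. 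The resulting estimate is of the much milder order $|x|^{-\lambda(\alpha-2)(n-2)/n}$ rather than the naive $|x|^{2-\lambda(\alpha-2)}$ (the factor $(n-2)/n$ reflecting the integrability threshold of the Newton kernel), and a standard $\varepsilon$-argument, exploiting that $f$ has only finite $L^1$ mass, upgrades the $\mathcal{O}$ to $o$. I expect this final interpolation step, together with the careful bookkeeping of the nested convolutions required for the $v$-estimate, to be the most delicate technical points of the proof.
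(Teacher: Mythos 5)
Your outline has the right ingredients in the right places — the reduction to the integral system \eqref{hls_syst}, the Brezis--Lions representation, and the $L^1$/pointwise interpolation for the Newton potential (which is exactly Lemma~\ref{pastgeneral}/Corollary~\ref{past}) — but it has a genuine circularity that it does not resolve. You propose to first show $v$ is harmonically bounded by inserting the inequality for $g$ from \eqref{hls_syst} into $\int_{|y|<1}g(y)|x-y|^{2-n}\,dy$ and estimating. But that inequality controls $g$ by
$\bigl(|y|^{2-\beta}+\int_{|z|<1}f(z)|y-z|^{2-\beta}\,dz\bigr)^\sigma$,
and in region $B$ one can have $\sigma>1$: the Riesz potential of $f$ raised to a power $\sigma>1$ is a nonlinear (Havin--Maz'ya type) potential that is \emph{not} controlled by $f\in L^1(B_1)$ alone, since $f$ can concentrate in spikes (indeed the paper's optimality constructions exploit exactly this). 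Thus your region decomposition cannot close without a prior pointwise bound on $f$; yet your proposed pointwise bound $f(x)\le C|x|^{-\lambda(\alpha-2)}$ is to be derived from the harmonic bound on $v$, which is what you are trying to prove. This loop must be broken before anything else, and your proposal contains no mechanism to do so.

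The paper breaks the loop with a bootstrap that has no analogue in your sketch: one rescales $f_j(\xi)=r_j^n f(x_j+r_j\xi)$ on balls around an arbitrary sequence $x_j\to 0$, applies Hedberg's inequality and the Havin--Maz'ya potential estimate (Proposition~\ref{cor_nonlin}) through Lemma~\ref{aux} to gain a definite amount of integrability per step, and iterates finitely many times to obtain $f(x)=\mathcal{O}(|x|^{-\gamma})$ for some $\gamma>n$. A second iteration using the contracting map $\gamma\mapsto\frac{\lambda\gamma}{n}[\sigma(\beta-2)-(n+2-\alpha)]$ upgrades this to $f(x)=\mathcal{O}(|x|^{-\lambda(\alpha-2)})$, which then feeds into Lemma~\ref{pastgeneral} to give \eqref{e17}--\eqref{e18} and hence both claims of Theorem~\ref{thm4A} via \eqref{B-L}. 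Your identification of the interpolation step and of the role of the boundary $\sigma=\frac{2}{\beta-2}+\frac{n(n-2)}{(\alpha-2)(\beta-2)\lambda}$ (equivalently, $\frac{\lambda(\alpha-2)[\sigma(\beta-2)-2]}{n}\le n-2$) is correct, but without the scaling/bootstrap machinery the argument does not get off the ground.
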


The following theorem deals with the case that
$(\lambda,\sigma)\in C$.

\begin{theorem}\label{thm4B}
Suppose $\alpha,\beta\in (2,n)$,
\[
\lambda>\frac{n}{\alpha-2}\quad\mbox{ and }\quad
\frac{2}{\beta-2}+\frac{n(n-2)}{(\alpha-2)(\beta-2)} \frac{1}{\lambda}
<\sigma<\frac{n+2-\alpha}{\beta-2}
+ \frac{n}{\beta-2}\frac{1}{\lambda}.
\]
Let $u$ and $v$ be $C^2(\R^n\setminus\{0\})\cap L^1(\R^n)$ positive
solutions of \eq{nonloc}. Then
\[
u(x)=o\Big(|x|^{-\frac{\lambda(\alpha-2)(n-2)}{n}}\Big) \quad\mbox{ as }x\to 0
\]
and
\[
v(x)=o\Big(|x|^{-\frac{\lambda(\alpha-2)[\sigma(\beta-2)-2]}{n}}\Big)
\quad\mbox{ as }x\to 0.
\]
\end{theorem}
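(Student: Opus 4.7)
The plan is to reduce the PDE system \eqref{nonloc} to the integral system \eqref{hls_syst} via the Brezis--Lions representation, perform a bootstrap on the convolution integrals, and then convert back to pointwise bounds on $u$ and $v$. First, by Lemma \ref{brezislions}, the functions $f:=-\Delta u$ and $g:=-\Delta v$ lie in $L^1(B_1(0))$ and satisfy $u(x)\le C(|x|^{2-n}+I_2 f(x))$ and $v(x)\le C(|x|^{2-n}+I_2 g(x))$, where $I_2 h(x):=\int_{|y|<1} h(y)|x-y|^{2-n}\,dy$ is the Newtonian potential. Substituting these into the convolutions in \eqref{nonloc} and applying Lemma \ref{uv} together with Corollary \ref{corstein} produces exactly the integral system \eqref{hls_syst}.

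Second, starting from the base integrability $f,g\in L^1(B_1(0))$, I would iterate bounds on the integral averages $F(r):=\int_{|y|<r}f$ and $G(r):=\int_{|y|<r}g$ together with pointwise bounds on $f$ and $g$. The standard Riesz-potential splitting (into $B_{|x|/2}(0)$, $B_{2|x|}(x)$, and the exterior) converts integrability estimates on $f,g$ into pointwise decay estimates on $I_{n-\alpha+2}g$ and $I_{n-\beta+2}f$; raising these to the $\lambda$ or $\sigma$ power and re-integrating via \eqref{hls_syst} feeds back into improved control of $F,G,f,g$. The resulting two-variable affine recursion has a fixed point in region $C$ determined by the threshold constraints: the hypothesis $\lambda>n/(\alpha-2)$ pushes the recursion for $u$ past the harmonic scale, the lower bound on $\sigma$ does the same for $v$, and the upper bound on $\sigma$ keeps the Riesz potentials appearing at each step integrable rather than overshooting to the region-$D$ regime. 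Once the iteration saturates, plugging the resulting estimates back into the representations of $u$ and $v$ yields the claimed pointwise conclusions.

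The main obstacle is bookkeeping the recursion precisely so that its stable exponents are exactly $\lambda(\alpha-2)(n-2)/n$ for $u$ and $\lambda(\alpha-2)[\sigma(\beta-2)-2]/n$ for $v$. The factor $(n-2)/n$ arises from combining the non-integrable pointwise bound on $f$ (forced by $\lambda(\alpha-2)>n$) with the $L^1$-integrability of $f$ on $B_1$ when representing $u$ via the Newtonian potential, which requires a careful splitting argument at the final step; analogously, the factor $\sigma(\beta-2)-2$ in the $v$ exponent tracks one application of the second inequality in \eqref{hls_syst} with the optimal bound for $u$ inserted. Equally delicate is propagating the $o(1)$-improvement $F(r),G(r)\to 0$ through every stage of the iteration so as to deliver the $o$-bounds claimed rather than $O$-bounds, and verifying that the two defining constraints on $\sigma$ in region $C$ give exactly the range in which both fixed-point exponents come out positive and the bootstrap terminates in finitely many steps.
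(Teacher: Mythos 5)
Your high-level strategy matches the paper's: reduce \eqref{nonloc} to the integral system \eqref{hls_syst} via Brezis--Lions (Lemma \ref{uv}), bootstrap to get sharp bounds on $f=-\Delta u$ and $g=-\Delta v$, and then push through the Newtonian potential to get $u$ and $v$. But the two middle steps --- the bootstrap and the conversion to little-$o$ --- are exactly where all the work lies, and your sketch of them is too vague to verify and, as written, points toward a mechanism that would not deliver the claimed bounds. The paper's bootstrap (Theorem \ref{crucial}) does not iterate the averages $F(r)=\int_{|y|<r}f$, $G(r)=\int_{|y|<r}g$. It works with the rescaled functions $f_j(\xi)=r_j^{\,n} f(x_j+r_j\xi)$, $g_j(\xi)=r_j^{\,n} g(x_j+r_j\xi)$ (with $r_j=|x_j|/4$), whose $L^1(B_2)$ norms tend to $0$ precisely because $f,g\in L^1(B_1)$. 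This vanishing of $\|f_j\|_{L^1}$ is the single source of every ``little-$o$'' in the theorem; an iteration on $F(r),G(r)$ without this rescaling would at best produce $O$-bounds, not $o$-bounds. To upgrade $\|f_j\|_{L^1}$ into $\|f_j\|_{L^\infty}$ with the correct power of $r_j$, the paper runs a Moser-type $L^p$ iteration (Lemma \ref{aux}) whose engine is Hedberg's inequality and the Havin--Maz'ya nonlinear potential estimate of Proposition \ref{cor_nonlin}, interpolating between $\|f_j\|_1$ and $\|f_j\|_\infty$. Your proposal mentions none of these, and the ``standard Riesz-potential splitting'' you invoke does not by itself achieve the required $L^1\to L^\infty$ gain with the exact exponents.

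Two further points are off. First, your description of the iteration as a two-variable affine recursion with a ``fixed point'' is not how it works: once the crude bound $f(x)=O(|x|^{-\gamma_0})$ with some $\gamma_0>n$ is in hand (Step~1), the iterated exponent $\gamma_{j+1}=\tfrac{\lambda\gamma_j}{n}[\sigma(\beta-2)-(n+2-\alpha)]$ \emph{contracts to zero} because the region-$C$ hypotheses make the multiplier less than one; the limiting bound $f(x)=O(|x|^{-\lambda(\alpha-2)})$ is the persistent big-$O$ term in \eqref{ee}, not a fixed point of the recursion. Second, the final conversion to $u$ and $v$ is not a direct substitution: you need the specific Lemma \ref{pastgeneral}, which combines the pointwise bound $f(x)=O(|x|^{-\gamma})$ with $f\in L^1$ to give $\int_{|y|<1}|x-y|^{2-n}f(y)\,dy=O(|x|^{-(n-2)})+o(|x|^{-\gamma(n-2)/n})$; this is where the factor $(n-2)/n$ you mention actually arises and where, again, $L^1$-smallness on shrinking balls is the mechanism for the little-$o$. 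Without explicitly supplying Lemma \ref{aux}, Proposition \ref{cor_nonlin}, and Lemma \ref{pastgeneral} (or equivalents), your outline does not constitute a proof.
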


By the following theorem the bounds for $u$ and $v$ in Theorems
\ref{thm4A} and \ref{thm4B} are optimal.

\begin{theorem}\label{optimalThm2}
Suppose $\alpha,\beta\in(2,n)$,
\[
\lambda>\frac{n}{\alpha-2}, \quad\text{and}\quad 0<\sigma<\frac{n}{\beta-2}.
\]
Let $h:(0,1)\to(0,1)$ be a continuous function such that $\lim_{t\to
  0^+}h(t)=0$. Then there exist
$C^\infty(\R^n\setminus\{0\})\cap L^1(\R^n)$ positive solutions $u$ and
$v$ of \eqref{nonloc} such that
\[
u(x)\not= {\cal O}\left(h(|x|)|x|^{-\frac{\lambda(\alpha-2)(n-2)}{n}}\right)
\quad\text{as }x\to 0
\]
and
\[
v(x)\not={\cal O}\left(h(|x|)\left[|x|^{-(n-2)}+|x|^{-\frac{\lambda(\alpha-2)[\sigma(\beta-2)-2]}{n}}\right]\right)\quad\text{as }x\to 0.
\]
\end{theorem}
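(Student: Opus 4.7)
The plan is to construct the counterexamples by concentrating singular mass at a discrete sequence of points $x_j \to 0$, following the blueprint of \cite{GTV2014} but adapted to the nonlocal setting. Pick points $x_j \in B_1(0)\setminus\{0\}$ on a ray through $0$ with $|x_{j+1}| \le |x_j|/4$, and fix a radial bump $\eta \in C_c^\infty(B_1(0))$ with $0\le\eta\le 1$ and $\eta\equiv 1$ on $B_{1/2}(0)$. For positive amplitudes $a_j, b_j$ and radii $r_j<|x_j|/4$ to be chosen, define
\[
f(y) = \sum_{j=1}^\infty a_j\,\eta\!\left(\tfrac{y-x_j}{r_j}\right),\qquad
g(y) = \sum_{j=1}^\infty b_j\,\eta\!\left(\tfrac{y-x_j}{r_j}\right),
\]
whose bumps are pairwise disjoint. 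Then set
\[
u(x) = \Gamma(x) + \int_{\R^n}\frac{f(y)\,dy}{|x-y|^{n-2}},\qquad
v(x) = \Gamma(x) + \int_{\R^n}\frac{g(y)\,dy}{|x-y|^{n-2}},
\]
with $\Gamma$ as in Remark \ref{rem1}, so that $u,v$ are positive and $C^\infty(\R^n\setminus\{0\})$, with $-\Delta u = c_n f \ge 0$ and $-\Delta v = c_n g \ge 0$. Choosing $\sum a_j r_j^n, \sum b_j r_j^n < \infty$ gives $u,v \in L^1(\R^n)$.

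The key local estimate is that
\[
u(x_j) \;\ge\; \int_{B_{r_j/2}(x_j)} \frac{a_j\,dy}{|x_j-y|^{n-2}} \;\gtrsim\; a_j r_j^{2},
\qquad v(x_j)\;\gtrsim\; b_j r_j^2.
\]
The natural choice of amplitudes matches the unavoidable lower bound
$(|x|^{-\alpha}*v)(x)\gtrsim |x|^{2-\alpha}$ coming from $v\ge\Gamma$ (and the analogous bound for $u$): take
\[
a_j = \varepsilon\,|x_j|^{-\lambda(\alpha-2)},\qquad b_j = \varepsilon\,|x_j|^{-\sigma(\beta-2)},
\]
with $\varepsilon>0$ small. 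With radii
\[
r_j \;=\; \omega_j\,h(|x_j|)^{1/2}\,|x_j|^{\lambda(\alpha-2)/n},
\]
for some slowly growing $\omega_j\to\infty$, an immediate computation gives
\[
\frac{u(x_j)}{h(|x_j|)\,|x_j|^{-\lambda(\alpha-2)(n-2)/n}}\;\gtrsim\;\varepsilon\,\omega_j^{2}\;\to\;\infty,
\]
yielding \eqref{optimalesti1}. For the $v$-estimate one exploits the feedback $(|x|^{-\beta}*u)(x_j)\gtrsim |x_j|^{2-\beta}+a_j r_j^{n-\beta}$, which allows an enlarged $b_j$: the additional mass from the $u$-bumps boosts the allowed $b_j$, which in turn forces $v(x_j)$ to exceed $h(|x_j|)|x_j|^{-\lambda(\alpha-2)[\sigma(\beta-2)-2]/n}$ in region $C$ and $h(|x_j|)|x_j|^{-(n-2)}$ in region $B$, covering both sides of the critical curve $E$ in one construction.

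Verification of \eqref{nonloc} reduces to bump-by-bump checks: on $\mathrm{supp}(\eta_j)\subset B_{r_j}(x_j)$ where $|x|\sim|x_j|$, the lower bound $(|x|^{-\alpha}*v)^\lambda(x)\ge c^\lambda|x_j|^{-\lambda(\alpha-2)}$ dominates $-\Delta u = c_n a_j$ once $\varepsilon$ is small, and away from the bumps both sides of each inequality vanish. The main obstacle is the simultaneous satisfaction of four competing constraints on the triple $(a_j,b_j,r_j)$: (i) $r_j$ large enough to overcome the conjectured upper bound for $u$; (ii) $r_j$ large enough to overcome the upper bound for $v$, whose form changes across the curve $E$; (iii) $a_j,b_j$ small enough so that both nonlocal inequalities of \eqref{nonloc} are preserved at the bumps; and (iv) $\sum a_j r_j^n,\sum b_j r_j^n<\infty$. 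Balancing these is delicate because the constraints (i)–(ii) push $r_j$ up while (iii)–(iv) push it down, and the feedback between the $u$- and $v$-bumps must be tracked carefully to obtain the sharper exponent $\lambda(\alpha-2)[\sigma(\beta-2)-2]/n$ in the $C$-region. Once a consistent choice of $\omega_j\to\infty$ slowly enough is produced, the full theorem follows.
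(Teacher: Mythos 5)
Your overall strategy mirrors the paper's: concentrate singular mass of $-\Delta u$ and $-\Delta v$ in disjoint bumps at points $x_j\to 0$, with radii comparable to $|x_j|^{\lambda(\alpha-2)/n}$, and add $|x|^{-(n-2)}$ to $v$ to pick up the harmonic part of the target. The paper packages the bump machinery in Lemma \ref{optimal} (keeping $r_j=|x_j|^{\lambda(\alpha-2)/n}$ fixed and modulating the amplitude through $\varphi=\max\{h^{1/2},h^{1/(2\sigma)}\}$ and $\psi=B\varphi^\sigma t^{\lambda(\alpha-2)(n-\sigma(\beta-2))/n}$), whereas you modulate $r_j$ via $\omega_j h^{1/2}$; these two bookkeeping schemes are interchangeable. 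However, as written your argument has two concrete problems.

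First, your announced choice $b_j=\varepsilon|x_j|^{-\sigma(\beta-2)}$ does not work. With that $b_j$ and $r_j=\omega_jh^{1/2}|x_j|^{\lambda(\alpha-2)/n}$ one gets
\[
\frac{v(x_j)}{h(|x_j|)\,|x_j|^{-\lambda(\alpha-2)[\sigma(\beta-2)-2]/n}}
\;\sim\;\varepsilon\,\omega_j^2\,|x_j|^{\,\sigma(\beta-2)\left[\tfrac{\lambda(\alpha-2)}{n}-1\right]},
\]
and since $\lambda(\alpha-2)>n$ the exponent on $|x_j|$ is strictly positive, so the ratio tends to $0$, not $\infty$. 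You acknowledge that $b_j$ must be ``enlarged'' by feedback from the $u$-bumps, but you never say what the enlarged $b_j$ is; this is precisely where all the work of the proof lies (in the paper it is encoded in the definition of $\psi$), so leaving it as ``delicate balancing'' leaves the theorem unproved.

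Second, your feedback estimate $\left(\frac{1}{|x|^\beta}*u\right)(x_j)\gtrsim |x_j|^{2-\beta}+a_j r_j^{\,n-\beta}$ has the wrong exponent. The convolution in \eqref{nonloc} is against $u$ itself, not $-\Delta u$; since $u\gtrsim a_j r_j^2$ on $B_{r_j/2}(x_j)$, the bump contributes
\[
\int_{B_{r_j/2}(x_j)}\frac{u(y)\,dy}{|x_j-y|^\beta}\;\gtrsim\;a_j r_j^2\cdot r_j^{\,n-\beta}\;=\;a_j r_j^{\,n+2-\beta},
\]
i.e.\ the exponent is $n+2-\beta$, not $n-\beta$. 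Using $n-\beta$ overestimates the allowed $b_j$ by a factor of $r_j^{-2\sigma}$ and would lead you to violate the second inequality of \eqref{nonloc}. With the correct exponent the arithmetic does close (one can take $\omega_j=h^{-\kappa}$ for a suitable $\kappa\in\left(\frac{(n+2-\beta)\sigma}{2[(n+2-\beta)\sigma+2]},\tfrac12\right)$, and then $b_j\sim\varepsilon^\sigma(a_jr_j^{\,n+2-\beta})^\sigma$ yields exactly the target exponent $\lambda(\alpha-2)[\sigma(\beta-2)-2]/n$), but as submitted the proposal does not get there.
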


The following theorem deals with the case that $(\lambda,\sigma)\in
D$. In this case there exist pointwise bounds for neither $u$ nor
$v$.

\begin{theorem}\label{thm5}
Suppose $\alpha,\beta\in (2,n)$,
\neweq{optimm1}
\lambda>\frac{n}{\alpha-2}\quad\mbox{ and }\quad
\sigma>\frac{n+2-\alpha}{\beta-2}
+ \frac{n}{\beta-2}\frac{1}{\lambda}.
\endeq
Let $h:(0,1)\to (0,\infty)$ be a continuous function such that
$\lim_{t\to0^+}h(t)=\infty$. Then there exist
$C^\infty(\R^n\setminus\{0\})\cap L^1(\R^n)$ positive solutions $u$ and $v$
of \eq{nonloc} such that
\neweq{large1} u(x)\neq {\mathcal
  O}(h(|x|))\quad\mbox{ as }x\to 0
\endeq
and
\neweq{large2}
v(x)\neq {\mathcal O}(h(|x|))\quad\mbox{ as }x\to 0.
\endeq
\end{theorem}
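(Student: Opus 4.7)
The plan is to construct, for each $h$, smooth positive $L^1$ solutions $(u,v)$ with $u(x_j),v(x_j)\gg h(|x_j|)$ along a sparse sequence $x_j\to 0$, built as superpositions of bumps concentrated at the $x_j$. The key reduction is to the integral formulation: set $u=N(f)+\Gamma$ and $v=N(g)+\Gamma$, where $N$ denotes the Newton potential, $\Gamma$ is a smooth positive function on $\R^n\setminus\{0\}$ agreeing with $|x|^{-(n-2)}$ on $B_1(0)$ and cut off to have compact support (ensuring $u,v\in L^1(\R^n)$), and $f,g\ge 0$ are to be chosen. The Riesz composition identity $|\cdot|^{-(n-2)}\ast|\cdot|^{-\gamma} = c|\cdot|^{2-\gamma}$ for $\gamma\in(2,n)$ then turns \eqref{nonloc} on $B_2\setminus\{0\}$ into a pair of pointwise inequalities of the type \eqref{hls_syst} for $(f,g)$.

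Fix $\psi\in C_c^\infty(B_1(0))$ nonnegative with $\psi(0)=1$, pick a sparse sequence $x_j\to 0$ (e.g.\ $x_j = 2^{-j}e_1$), and set $f=\sum_j a_j\,\psi((\cdot-x_j)/r_j)$ and $g=\sum_j b_j\,\psi((\cdot-x_j)/r_j)$ with $r_j\le |x_j|/4$. A direct computation yields $u(x_j)\asymp a_jr_j^2+|x_j|^{-(n-2)}$ and, again by Riesz composition, $(|x|^{-\alpha}\ast v)(x_j) \asymp |x_j|^{2-\alpha}+b_jr_j^{n+2-\alpha}$ (cross-bump terms being negligible by sparsity), with the symmetric statement for $v$. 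The integral inequalities at $x_j$ thereby reduce to
\[
a_j\le C\bigl(|x_j|^{2-\alpha}+b_jr_j^{n+2-\alpha}\bigr)^{\lambda},\qquad b_j\le C\bigl(|x_j|^{2-\beta}+a_jr_j^{n+2-\beta}\bigr)^{\sigma}.
\]

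Given $h$, define $H_j := j\,h(|x_j|)+|x_j|^{-M}$ for $M$ large enough to dominate the polynomial obstacles below, and enforce $a_jr_j^2=b_jr_j^2=H_j$. Substituting $a_j=b_j=H_j/r_j^2$ into the two constraints, in the bump-dominated regime, yields lower bounds of the form $r_j\ge c\,H_j^{-(\lambda-1)/[2+\lambda(n-\alpha)]}$ together with the analogous $\sigma$-bound. The hypotheses defining region $D$, namely $\lambda(\alpha-2)>n$ and $\lambda\sigma(\beta-2)>\lambda(n+2-\alpha)+n$ (which is exactly \eqref{optimm1} rewritten), are precisely the algebraic thresholds ensuring (i) these lower bounds are compatible with $r_j\le|x_j|/4$ for $j$ large, (ii) the bump contribution in each convolution actually dominates the $|x_j|^{2-\gamma}$ background term so the chosen regime is internally consistent, and (iii) the series $\sum H_jr_j^{n-2}$ controlling $\|f\|_{L^1}+\|g\|_{L^1}$ converges, since the governing exponent $[n-\lambda(\alpha-2)]/[2+\lambda(n-\alpha)]$ is strictly negative exactly when $\lambda(\alpha-2)>n$. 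With these compatibilities verified, $u(x_j)\ge a_jr_j^2=H_j\ge j\,h(|x_j|)\to\infty$, which proves \eqref{large1}, and symmetrically \eqref{large2}.

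The principal obstacle is the simultaneous feasibility of the two convolution inequalities, bump separation $r_j\le|x_j|/4$, and $L^1$ summability while allowing $H_j$ to exceed any prescribed $h(|x_j|)$; region $D$ is precisely the open parameter range where all four exponents line up. Approaching the separating curve $E$ flips one of them, which is exactly why Theorem \ref{thm4B} gives sharp quantitative bounds throughout region $C$. Minor additional points include the edge case $\sigma\le 1$ (only possible when $\alpha+\beta>n+4$), controlling the cross-bump interaction terms by choosing the $x_j$ sufficiently sparse, and checking that each pointwise inequality holds on the whole support of each bump (which follows from continuity of the right-hand side and the scale separation $r_j\ll|x_j|$).
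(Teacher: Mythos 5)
Your overall strategy (bumps concentrated at a sparse sequence $x_j\to 0$, with the system reduced to algebraic conditions on bump heights and radii) is the same as the paper's, but your specific ansatz $a_jr_j^2=b_jr_j^2=H_j$ --- forcing $u(x_j)\asymp v(x_j)\asymp H_j$ with the \emph{same} prescribed growth --- overdetermines the system and fails on a nonempty open subset of region $D$. Track the second inequality: with $a_j=b_j=H_j/r_j^2$ it reads $H_j^{1-\sigma}\le C\,r_j^{2+\sigma(n-\beta)}$ in the bump-dominated regime. If $\sigma\le 1$ (which does occur in $D$ when $n\ge 5$ and $\alpha+\beta>n+4$; e.g.\ $n=10$, $\alpha=\beta=8$, $\lambda=10$, $\sigma=0.9$) this is outright infeasible as $H_j\to\infty$ and $r_j\to 0$, while the background-dominated alternative caps $H_j$ by a fixed power of $|x_j|$ and so cannot beat an arbitrary $h$; this is not a minor edge case. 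Even when $\sigma>1$, the inequality forces $r_j\gtrsim H_j^{-(\sigma-1)/(2+\sigma(n-\beta))}$, and then the mass of each bump of $f=-\Delta u$ satisfies $a_jr_j^n=H_jr_j^{n-2}\gtrsim H_j^{(n-\sigma(\beta-2))/(2+\sigma(n-\beta))}\to\infty$ whenever $\sigma(\beta-2)<n$ --- and the strip $\frac{n+2-\alpha}{\beta-2}+\frac{n}{\lambda(\beta-2)}<\sigma<\frac{n}{\beta-2}$ is a nonempty part of $D$, since the curve $E$ lies strictly below $\sigma=n/(\beta-2)$ when $\lambda>n/(\alpha-2)$. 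Then $-\Delta u\notin L^1(B_1(0))$, which by the Brezis--Lions lemma is impossible for a positive superharmonic $u$. Your summability analysis only used the lower bound on $r_j$ coming from the first inequality and missed the binding lower bound coming from the second.

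The repair is to decouple the two amplitudes, which is exactly what the paper does: the $u$-bumps have height $A\varphi(|x_j|)/r_j^{n-2}$ with $\sum_j\varphi(|x_j|)<\infty$ fixed in advance, while the $v$-bump height $A\psi_j/r_j^{n-2}$ is tied to $r_j$ through $r_j=[(B\psi_j)^\lambda/\varphi(|x_j|)]^{a}$ with $a=1/(\lambda(\alpha-2)-n)$, saturating the first inequality. The region-$D$ hypothesis enters precisely as the sign condition $a\lambda-b<0$ with $b=1/(n-\sigma(\beta-2))$ (after first reducing to $\sigma<n/(\beta-2)$ by adding a large constant to $u$, which only makes the system easier for larger $\sigma$), and this sign condition lets one shrink $r_j$ until the second inequality, the summability of both masses, and the largeness of both amplitudes relative to $h$ hold simultaneously. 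If you redo your bookkeeping with independent amplitudes $P_j=u(x_j)$ and $Q_j=v(x_j)$, the composite feasibility condition does come out to $\lambda\sigma(\beta-2)>\lambda(n+2-\alpha)+n$ on the nose; but as written, the feasibility claim for the symmetric ansatz is incorrect.
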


From Theorems \ref{thm1}, \ref{thm3} and \ref{thm5} we find:

\begin{corollary}
Let $\alpha\in (0,n)$ and $\lambda\geq 0$. Consider the inequality
\neweq{corsimple}
0\leq -\Delta u\leq \left(\frac{1}{|x|^\alpha}* u\right) ^\lambda
\quad \mbox{ in }B_2(0)\setminus\{0\}\subset \R^n, n\geq 3.
\endeq
\begin{enumerate}
\item[(i)] If $\lambda(\alpha-2)< n$ then any
  $C^2(\R^n\setminus\{0\})\cap L^1(\R^n)$ positive solution $u$ of
  \eq{corsimple} satisfies
$$
u(x)={\mathcal O}(|x|^{-(n-2)}) \quad\mbox{ as }x\to 0.
$$
\item[(ii)] If $\lambda(\alpha-2)> n$ then \eq{corsimple} admits
  $C^2(\R^n\setminus\{0\})\cap L^1(\R^n)$ positive solutions which are
  arbitrarily large around the origin in the following sense: for any
  continuous function $h:(0,1)\to (0,\infty)$ satisfying
  $\lim_{t\to0^+}h(t)=\infty$, there exists a
  $C^2(\R^n\setminus\{0\})\cap L^1(\R^n)$  positive solution $u$ of
  \eq{corsimple} such that
$$
u(x)\neq {\mathcal O}(h(|x|))\quad\mbox{ as }x\to 0.
$$
\end{enumerate}
\end{corollary}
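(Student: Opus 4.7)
The plan is to deduce the corollary from Theorems \ref{thm1}, \ref{thm3}, and \ref{thm5} by specializing to the symmetric case $v=u$, $\beta=\alpha$, $\sigma=\lambda$, in which \eqref{corsimple} coincides with each of the two inequalities in the system \eqref{nonloc}. Thus every $C^2(\R^n\setminus\{0\})\cap L^1(\R^n)$ positive solution $u$ of \eqref{corsimple} yields a solution pair $(u,u)$ of \eqref{nonloc} with these symmetric parameters, and conversely any symmetric solution we construct for \eqref{nonloc} is automatically a solution of \eqref{corsimple}.

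For part (i), I would split on the size of $\alpha$. If $\alpha\in(0,2]$, then taking Theorem \ref{thm1}'s exponent $\beta$ to be our $\alpha$ places us in the hypotheses of that theorem; since $\lambda(\alpha-2)\le 0\le n$, the first case of \eqref{thm1estu} applies and yields $u(x)=\mathcal{O}(|x|^{-(n-2)})$. If instead $\alpha\in(2,n)$, then $\lambda(\alpha-2)<n$ is equivalent to $\lambda<n/(\alpha-2)$, and with $\sigma=\lambda$, $\beta=\alpha$ this is precisely hypothesis \eqref{assumption} of Theorem \ref{thm3}, which gives the harmonic bound directly. In both cases the conclusion of (i) follows.

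For part (ii), necessarily $\alpha\in(2,n)$, and I would invoke Theorem \ref{thm5} with $\beta=\alpha$, $\sigma=\lambda$. The first half of \eqref{optimm1} is exactly our hypothesis. The second half reduces to
\[
\lambda(\alpha-2)-\frac{n}{\lambda}>n+2-\alpha.
\]
Setting $f(\lambda):=\lambda(\alpha-2)-n/\lambda$, one has $f'(\lambda)=(\alpha-2)+n/\lambda^{2}>0$, so $f$ is strictly increasing on $(0,\infty)$, while $f(n/(\alpha-2))=n-(\alpha-2)=n+2-\alpha$. Hence the required strict inequality is equivalent to $\lambda>n/(\alpha-2)$, which is our hypothesis. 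Theorem \ref{thm5} then supplies positive solutions $(u,v)$ of \eqref{nonloc} for which neither is $\mathcal{O}(h(|x|))$ near $0$.

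The main subtlety I anticipate is that Theorem \ref{thm5} delivers a pair $(u,v)$ rather than a single $u$ solving the scalar inequality \eqref{corsimple}. The cleanest fix is to verify that, under the symmetric choice of parameters $\alpha=\beta$, $\sigma=\lambda$, the construction underlying Theorem \ref{thm5} can be carried out so as to yield $v=u$; alternatively one repeats the same construction for \eqref{corsimple} using the identical roles played by the two equations. Either route should present no new analytic difficulty beyond what is already required for Theorem \ref{thm5}.
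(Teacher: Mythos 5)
Your overall route — specialize \eqref{nonloc} to $v=u$, $\beta=\alpha$, $\sigma=\lambda$ and invoke Theorems \ref{thm1}, \ref{thm3}, \ref{thm5} — is exactly the paper's intended derivation (the paper states the corollary immediately after these theorems with no further argument). Your verification of part (i) is complete, and your algebra checking that the hypotheses of Theorem \ref{thm5} with $\beta=\alpha$, $\sigma=\lambda$ collapse to $\lambda>n/(\alpha-2)$ is correct.

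However, the subtlety you flag in part (ii) is a genuine gap, and neither of your two suggested repairs is as routine as you hope. In particular, the first repair — arranging $v=u$ in the construction of Theorem \ref{thm5} — cannot work as stated: the proof of Theorem \ref{thm5} begins by reducing to the case $\sigma<n/(\beta-2)$ (which is required so that $b=1/(n-\sigma(\beta-2))>0$), and in the symmetric case $\sigma=\lambda$, $\beta=\alpha$ this would force $\lambda<n/(\alpha-2)$, contradicting the hypothesis of part (ii). So the pair $(u,v)$ produced there is genuinely asymmetric, and $-\Delta u\le(\frac{1}{|x|^\alpha}*v)^\lambda$ does not by itself give $-\Delta u\le(\frac{1}{|x|^\alpha}*u)^\lambda$. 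The clean fix, which you do not mention, is to pass to the sum: let $w=u+v$. Then $w$ is positive, superharmonic in $B_2(0)\setminus\{0\}$, belongs to $C^2(\R^n\setminus\{0\})\cap L^1(\R^n)$, and since $u,v\le w$ one gets
\[
0\le -\Delta w\le \left(\frac{1}{|x|^\alpha}*v\right)^\lambda+\left(\frac{1}{|x|^\alpha}*u\right)^\lambda\le 2\left(\frac{1}{|x|^\alpha}*w\right)^\lambda.
\]
Because $\lambda(\alpha-2)>n$ and $\alpha<n$ force $\lambda>n/(\alpha-2)>n/(n-2)>1$, the rescaled function $\tilde w:=2^{1/(\lambda-1)}w$ satisfies $-\Delta\tilde w\le(\frac{1}{|x|^\alpha}*\tilde w)^\lambda$, i.e.\ solves \eqref{corsimple}, and $\tilde w\ge u$ so $\tilde w\ne{\mathcal O}(h(|x|))$ as $x\to 0$. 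This small lemma is what actually closes the corollary, and it should be stated rather than deferred to ``either route should present no new analytic difficulty.''
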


A first tool we use in our approach to \eq{nonloc} is an integral
representation formula for nonnegative superharmonic functions in
punctured balls due to Brezis and Lions \cite{BL1981} (see also
\cite{CDM2008, GMT2011, T2007, T2011} where representation formulae
for various kinds of differential operators are deduced). Another
important tool in our approach is Proposition \ref{cor_nonlin} which
provides pointwise estimates for nonlinear potentials of
Havin-Maz'ya type. Further, various integral estimates will be
employed as stated in Section \ref{intg}. The optimality of the
pointwise bounds obtained in our main results will be achieved by
constructing solutions $u$ and $v$ of \eq{nonloc} satisfying suitable
coupled conditions on a countable sequence of balls that concentrate
at the origin. At this stage we leave open the question of
(non)existence of pointwise bounds for $(\lambda, \sigma)$ on the
curve $E$ defined above.

The remainder of the paper is organized as follows: In Subsection
\ref{integral} we state our main results for the system
\eqref{hls_syst}.  In Section \ref{prem} we collect various pointwise
and integral estimates for some quantities which will frequently
appear in the course of our proofs. Sections
\ref{sec-thm1}--\ref{sec-large-fg} contain the proofs of our main
results. Theorem \ref{crucial}, which deals with the system
\eqref{hls_syst}, is a crucial result, from which the optimal bounds
for positive solutions of the systems \eqref{nonloc} and
\eqref{hls_syst} easily follow.

\subsection{Results for system \eqref{hls_syst}}\label{integral}
We now state our results for the system \eqref{hls_syst} when
$\la,\sigma\ge 0$ and $\a,\b\in(2,n+2)$. As in Subsection~\ref{pde}, we
can assume that \eqref{quadrant} holds. Let the regions $A$--$D$ be
defined as in Subsection \ref{pde}.

The following theorem deals with the case $(\la,\sigma)\in A$.

\begin{theorem}\label{crucial1}
Suppose $\alpha,\beta\in(2,n+2)$,
\[
0\le \lambda\le \frac{n}{\a-2}\quad\text{ and }\quad
0\le \sigma< \frac{n}{\b-2}.
\]
Let $f$ and $g$ be $L^1(B_1(0))$ solutions
of \eqref{hls_syst} where $M$ is a positive constant. Then
\[
f(x)={\mathcal O}\left(|x|^{-\lambda(\alpha-2)}\right)\quad\mbox{ as }x\to 0
\]
and
\[
g(x)={\mathcal O}\left(|x|^{-\sigma(\beta-2)}\right)
\quad\mbox{ as }x\to 0.
\]
\end{theorem}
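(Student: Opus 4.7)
The strategy is to eliminate $g$ from \eqref{hls_syst} and reduce the problem to a single nonlinear integral inequality for $f$, which can then be attacked by the Havin--Maz'ya potential estimates of Proposition~\ref{cor_nonlin}. Substituting the bound on $g$ from the second inequality of \eqref{hls_syst} into the first yields, for $0<|x|<1$,
\begin{equation*}
f(x) \le C\left(|x|^{2-\alpha} + \int_{|y|<1}\frac{1}{|x-y|^{\alpha-2}}\left(|y|^{2-\beta} + \int_{|z|<1}\frac{f(z)\,dz}{|y-z|^{\beta-2}}\right)^{\sigma}dy\right)^{\lambda},
\end{equation*}
a closed nonlinear Havin--Maz'ya-type inequality for $f\in L^1(B_1(0))$.

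Using the symmetry assumption \eqref{quadrant} to reduce to this asymmetric form, I would apply Proposition~\ref{cor_nonlin} together with the power-convolution estimates of Lemma~\ref{uv} and Corollary~\ref{corstein} to extract the pointwise bound $f(x)=O(|x|^{-\lambda(\alpha-2)})$ as $x\to 0$. The role of the region-$A$ hypothesis $\sigma<n/(\beta-2)$ is that the ``linear'' portion of the nonlinear potential,
\[
\int_{|y|<1}|x-y|^{2-\alpha}|y|^{-\sigma(\beta-2)}\,dy,
\]
is at worst of order $|x|^{n-(\alpha-2)-\sigma(\beta-2)}$ at the origin (by Corollary~\ref{corstein}), and the exponent comparison
\[
\bigl(n-(\alpha-2)-\sigma(\beta-2)\bigr)-(2-\alpha) \;=\; n-\sigma(\beta-2)\;>\;0
\]
shows that this is strictly less singular than the forcing $|x|^{2-\alpha}$. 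The self-interaction piece, in which the inner Riesz potential $J_\beta f(y)=\int f(z)|y-z|^{2-\beta}\,dz$ reappears, is precisely the nested quantity covered by Proposition~\ref{cor_nonlin}, and raising the dominant forcing $|x|^{2-\alpha}$ to the $\lambda$th power produces the asserted bound on $f$.

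With $f(x)\le C|x|^{-\lambda(\alpha-2)}$ in hand, I would substitute back into the second inequality of \eqref{hls_syst}. Again by Corollary~\ref{corstein},
\[
\int_{|z|<1}|z|^{-\lambda(\alpha-2)}|y-z|^{2-\beta}\,dz \;\le\; C\bigl(1+|y|^{n-\lambda(\alpha-2)-(\beta-2)}\bigr),
\]
and the comparison $\bigl(n-\lambda(\alpha-2)-(\beta-2)\bigr)-(2-\beta)=n-\lambda(\alpha-2)\ge 0$, which uses the region-$A$ hypothesis $\lambda\le n/(\alpha-2)$, shows that $|y|^{2-\beta}$ dominates. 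Raising to the $\sigma$th power gives $g(y)=O(|y|^{-\sigma(\beta-2)})$ as $y\to 0$, as required.

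The central obstacle is the first step. The $L^1$ hypothesis alone provides no pointwise control on the Riesz potential $\int g(y)|x-y|^{2-\alpha}\,dy$, so one cannot close the argument by handling the two inequalities of \eqref{hls_syst} separately. Proposition~\ref{cor_nonlin} is precisely the tool that converts the integrability information into a pointwise bound while simultaneously absorbing the nested self-interaction $(|y|^{2-\beta}+J_\beta f(y))^\sigma$. A secondary subtlety arises at the endpoint $\lambda=n/(\alpha-2)$, where the power-convolution estimates acquire a logarithmic factor; this is harmless and absorbed into the $O$-notation of the conclusion.
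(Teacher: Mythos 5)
Your reduction to a single nested inequality for $f$ is the right starting point (the paper does essentially the same, arriving at the Havin--Maz'ya potential ${\mathbf U}_{n+2-\alpha,\,n+2-\beta,\,\sigma}$ acting on $f$), and your handling of the ``linear'' part and of the second inequality once the bound on $f$ is known is broadly correct. But the central step --- extracting $f(x)={\mathcal O}(|x|^{-\lambda(\alpha-2)})$ --- is exactly where your argument has a genuine gap. Proposition~\ref{cor_nonlin} does \emph{not} convert $L^1$ information into a pointwise bound: its right-hand side contains the factor $\|f\|_\infty^{(\sigma(n-b)-a)/n}$, so it needs an a priori $L^\infty$ bound on $f$ as input, and no such bound is available (indeed $f$ is expected to blow up at $0$, so $f\notin L^\infty(B_1(0))$). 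The paper closes this circle with a two-stage argument that your proposal omits entirely: one first rescales to balls $B_{r_j}(x_j)$ with $r_j=|x_j|/4$, sets $f_j(\xi)=r_j^nf(x_j+r_j\xi)$ so that $\|f_j\|_{L^1}\to 0$, and runs a bootstrap (Lemma~\ref{aux}) in which each application of the Riesz potential estimates improves the integrability exponent of $\{r_j^\gamma f_j\}$ by a fixed amount $C_0>0$, reaching $L^\infty$ after finitely many steps and yielding a crude bound $f(x)={\mathcal O}(|x|^{-\gamma})$ for some $\gamma>n$; only then can Proposition~\ref{cor_nonlin} be applied (to the localized potentials on $B_{2r_j}(x_j)$), and a second finite iteration $\gamma_{j+1}=\frac{\lambda\gamma_j}{n}[\sigma(\beta-2)-(n+2-\alpha)]$ drives the exponent down to $\lambda(\alpha-2)$. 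Without some version of this bootstrap your first step does not go through.

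A secondary error: at the endpoint $\lambda=n/(\alpha-2)$ the difficulty is not a ``harmless logarithm.'' There $|z|^{-\lambda(\alpha-2)}=|z|^{-n}$ is not locally integrable, so the convolution $\int_{|z|<1}|z|^{-\lambda(\alpha-2)}|y-z|^{2-\beta}\,dz$ in your final step diverges. The paper avoids this by never substituting the pointwise bound on $f$ naively; it instead invokes Lemma~\ref{pastgeneral}, which combines $f\in L^1(B_1(0))$ with the pointwise bound to give $\int_{|y|<1}f(y)|x-y|^{-(\beta-2)}\,dy={\mathcal O}(|x|^{-(\beta-2)})+o\bigl(|x|^{-\gamma(\beta-2)/n}\bigr)$, which is ${\mathcal O}(|x|^{-(\beta-2)})$ precisely because $\gamma=\lambda(\alpha-2)\le n$ in region $A$. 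You need that lemma (or an equivalent device) to finish the estimate for $g$.
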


The following theorem deals with the case $(\la,\sigma)\in B\cup C$.

\begin{theorem}\label{crucial2}
Suppose $\alpha,\beta\in(2,n+2)$,
\[
\lambda> \frac{n}{\a-2}\quad\text{ and }\quad
0\le \sigma<\frac{n+2-\alpha}{\beta-2}
+\frac{n}{\beta-2} \frac{1}{\lambda}.
\]
Let $f$ and $g$ be $L^1(B_1(0))$ solutions
of \eqref{hls_syst} where $M$ is a positive constant. Then
\[
f(x)={\mathcal O}\left(|x|^{-\lambda(\alpha-2)}\right)\quad\mbox{ as
}x\to 0
\]
and
\[
g(x)=o\left(|x|^{-\frac{\lambda(\alpha-2)\sigma(\beta-2)}{n}}\right)
\quad\mbox{ as }x\to 0.
\]
\end{theorem}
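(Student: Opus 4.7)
The plan is to decouple the system \eqref{hls_syst} into a single nonlinear Havin-Maz'ya type inequality for $f$, extract the $f$-bound from it via Proposition \ref{cor_nonlin}, and then feed this pointwise bound on $f$ --- together with $f\in L^1(B_1(0))$ --- back into the second inequality to bound $g$. A preliminary observation is that the hypothesis $\sigma<\frac{n+2-\alpha}{\beta-2}+\frac{n}{(\beta-2)\lambda}$ combined with $\lambda>\frac{n}{\alpha-2}$ forces $\sigma(\beta-2)<n$, since $\sigma(\beta-2)<(n+2-\alpha)+n/\lambda<(n+2-\alpha)+(\alpha-2)=n$. Hence $g$ lies in a subcritical range relative to its own defining inequality.

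For the $f$-bound, I substitute the second inequality of \eqref{hls_syst} into the first to obtain, via $(a+b)^\sigma\le 2^\sigma(a^\sigma+b^\sigma)$,
\[
f(x)\le C\Big(|x|^{2-\alpha}+\int_{|y|<1}\frac{|y|^{-\sigma(\beta-2)}}{|x-y|^{\alpha-2}}\,dy+\int_{|y|<1}\frac{G(y)^\sigma}{|x-y|^{\alpha-2}}\,dy\Big)^\lambda,
\]
where $G(y)=\int_{|z|<1}f(z)|y-z|^{2-\beta}\,dz$. The middle term is $O(|x|^{2-\alpha})$ by a standard Riesz-composition bound (valid because $\sigma(\beta-2)<n$). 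The third term is a nonlinear Havin-Maz'ya potential of $f$; applying Proposition \ref{cor_nonlin} to it, with $f\in L^1$ as input data, should produce a pointwise bound of the same form $O(|x|^{2-\alpha})$ under the hypothesis on $\sigma$. Substituting back yields $f(x)=O(|x|^{-\lambda(\alpha-2)})$ as $x\to 0$.

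For the $g$-bound, I substitute the $f$-bound into the second inequality. The critical feature is that $\lambda(\alpha-2)>n$, so $f(y)\le C|y|^{-\lambda(\alpha-2)}$ is not locally integrable and a direct substitution fails. Instead I combine the pointwise bound with the $L^1$ mass $m(r):=\int_{|y|<r}f\,dy$, which satisfies $m(r)\to 0$ as $r\to 0$ by absolute continuity. For $|x|=\rho$ small, split $\int f(y)|x-y|^{2-\beta}\,dy$ into the near region $\{|x-y|<\rho/2\}$ and its complement. In the near region both constraints bind: $f\lesssim \rho^{-\lambda(\alpha-2)}$ pointwise and the total mass is at most $m(3\rho/2)$. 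Balancing at scale $r_*\sim(m(3\rho/2)\rho^{\lambda(\alpha-2)})^{1/n}$ and integrating the singular kernel over a ball of that radius gives
\[
\int_{|x-y|<\rho/2}f(y)|x-y|^{2-\beta}\,dy\lesssim m(3\rho/2)^{(n+2-\beta)/n}\,\rho^{-\lambda(\alpha-2)(\beta-2)/n}.
\]
The far region contributes only $O(\rho^{2-\beta})$ via $f\in L^1$. Raising the dominant near-region term to the power $\sigma$ and using $m(3\rho/2)\to 0$ yields the desired $g(x)=o(|x|^{-\lambda(\alpha-2)\sigma(\beta-2)/n})$.

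The main obstacle is the first step, specifically controlling the double nonlinear potential $\int G(y)^\sigma|x-y|^{-(\alpha-2)}\,dy$ at exactly the sharp subcritical threshold $\sigma<\frac{n+2-\alpha}{\beta-2}+\frac{n}{(\beta-2)\lambda}$. The shape of this threshold is dictated precisely by the scaling at which iterated Riesz potentials of an $L^1$ function can be absorbed into the explicit $|x|^{2-\alpha}$ driver in the first inequality of \eqref{hls_syst}; verifying this absorption rigorously is precisely what Proposition \ref{cor_nonlin} is designed to furnish. Once the $f$-bound is in hand, the $g$-step is essentially a rearrangement-type calculation balancing pointwise and $L^1$ constraints, with the $o(\cdot)$-improvement an automatic consequence of the absolute continuity of the Lebesgue integral.
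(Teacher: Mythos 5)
Your plan for the $g$-bound is sound and is essentially the paper's Lemma \ref{pastgeneral} in disguise: balancing the pointwise bound $f\lesssim\rho^{-\lambda(\alpha-2)}$ against the vanishing $L^1$-mass $m(3\rho/2)$ over a ball of radius $r_*\sim(m\,\rho^{\lambda(\alpha-2)})^{1/n}$ is precisely how the paper obtains its estimate \eqref{e18}. That part of your argument would go through once the $f$-bound is in hand.

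The gap is in the $f$-bound. Proposition \ref{cor_nonlin} bounds $\Vert\mathbf{U}_{a,b,\sigma}f\Vert_\infty$ by $\Vert f\Vert_1^{(a+b\sigma)/n}\Vert f\Vert_\infty^{(\sigma(n-b)-a)/n}$, so it is not applicable ``with $f\in L^1$ as input data'': the $L^\infty$ norm of $f$ appears on the right-hand side with a positive power whenever $\sigma>\frac{a}{n-b}=\frac{n+2-\alpha}{\beta-2}$, which is the interesting regime here. A priori you only know $f\in L^1(B_1(0))$, and the Havin--Maz'ya potential $\mathbf{I}_{n+2-\alpha}\bigl((\mathbf{I}_{n+2-\beta}f)^\sigma\bigr)$ of an $L^1$ function need not be bounded in this range (one gets only $L^q$ control with $1/q=(\sigma(\beta-2)-(n+2-\alpha))/n>0$). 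The paper therefore devotes Step 1 of the proof of Theorem \ref{crucial} (via Lemma \ref{aux}) to a bootstrap in Lebesgue exponents: starting from the $L^1$ bound $\Vert f_j\Vert_{L^1(B_2(0))}\to 0$ for the rescaled functions $f_j(\xi)=r_j^n f(x_j+r_j\xi)$, it iterates Riesz-potential estimates through the coupled inequality \eqref{e13}, gaining a uniform amount $C_0>0$ in $1/p$ at each pass, until after finitely many iterations $\{r_j^\gamma f_j\}$ is bounded in $L^\infty$ for some $\gamma>n$. The sign condition that makes this gain strictly positive is exactly your hypothesis $\sigma<\frac{n+2-\alpha}{\beta-2}+\frac{n}{\beta-2}\frac{1}{\lambda}$ (equivalently $\lambda[\sigma(\beta-2)-(n+2-\alpha)]<n$); it is not a ``technical verification'' of Proposition \ref{cor_nonlin} but a separate iteration that Proposition \ref{cor_nonlin} alone does not provide. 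Moreover, even once some $L^\infty$ bound $f={\cal O}(|x|^{-\gamma})$ with $\gamma>n$ is available, a single application of Proposition \ref{cor_nonlin} yields only $f={\cal O}(|x|^{-\lambda(\alpha-2)})+o(|x|^{-\gamma'})$ with $\gamma'=\frac{\lambda\gamma}{n}[\sigma(\beta-2)-(n+2-\alpha)]<\gamma$, not the sharp exponent $\lambda(\alpha-2)$ in one shot; the paper's Step 2 iterates this map $\gamma\mapsto\gamma'$ finitely many times (it decreases to 0 under your hypothesis) to eliminate the little-oh term. Both of these iterations are essential and missing from your proposal.
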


By the following result the estimates
for $f$ and $g$ in Theorems \ref{crucial1} and \ref{crucial2} are optimal.

\begin{theorem}\label{optimal-fg}
Suppose $\varepsilon>0$, $\alpha,\beta\in(2,n+2)$,
\[
\lambda\ge 0 \quad\text{and}\quad 0<\sigma<\frac{n}{\beta-2}.
\]
Let $h:(0,1)\to (0,1)$ be a continuous function such that
$\lim_{t\to 0^+} h(t)=0$. Then there exist solutions
\begin{equation}\label{op1}
f,g\in C^\infty(\R^n\setminus\{0\})\cap L^1(\R^n)
\end{equation}
of the system
\neweq{op2} \left\{
\begin{aligned}
0\leq f(x)\leq \varepsilon\left(|x|^{-(\alpha-2)}
+\intl_{|y|<\varepsilon}\frac{ g(y)\,dy}{|x-y|^{\alpha-2}} \right)^\lambda\\
0\leq g(x)\leq \varepsilon\left(|x|^{-(\beta-2)}
+\intl_{|y|<\varepsilon}\frac{ f(y)\,dy}{|x-y|^{\beta-2}}
\right)^\sigma
\end{aligned}
\right.\qquad\mbox{for }x\in \R^n\setminus\{0\},\,n\ge 3,
\endeq
such that
\begin{equation}\label{op3}
\limsup_{x\to 0}|x|^{\lambda(\alpha-2)}f(x)>0
\end{equation}
and
\begin{equation}\label{op4}
g(x)\not={\cal
  O}\left(h(x)\left[|x|^{-\sigma(\beta-2)}+|x|^{-\frac{\lambda(\alpha-2)\sigma(\beta-2)}{n}}\right]\right)
\quad\text{as }x\to 0.
\end{equation}
\end{theorem}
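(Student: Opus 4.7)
I would construct $f,g$ as countable superpositions of smooth bumps concentrated along a sparse sequence $x_k=t_ke_1\in B_\varepsilon\setminus\{0\}$, $t_k\downarrow 0$, in the spirit of the optimality arguments behind Theorems \ref{optimalthm1} and \ref{thm5}. Fix a radial cutoff $\psi\in C_c^\infty(B_1)$ with $\psi(0)=1$ and $0\le\psi\le1$. For radii $r_k,s_k<t_k/8$ (which makes the bumps pairwise disjoint and contained in $B_\varepsilon$) and heights $A_k,B_k>0$, set
\[
f(x)=\sum_{k\ge 1} A_k\,\psi\!\left(\frac{x-x_k}{r_k}\right),\qquad g(x)=\sum_{k\ge 1} B_k\,\psi\!\left(\frac{x-x_k}{s_k}\right).
\]
I would then take
\[
A_k=c_1\varepsilon\,t_k^{-\lambda(\alpha-2)},\qquad B_k=c_2\varepsilon\bigl(t_k^{-\sigma(\beta-2)}+(A_kr_k^{n+2-\beta})^\sigma\bigr),
\]
for small absolute $c_1,c_2\in(0,1)$, with $r_k\sim t_k^{\lambda(\alpha-2)/n}$ so that $A_kr_k^{n+2-\beta}\sim t_k^{-\lambda(\alpha-2)(\beta-2)/n}$, and with $s_k$ and the sparsity of $\{t_k\}$ chosen last, small enough to control $L^1$ norms and cross-contributions between distinct bumps.

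\textbf{Paragraph 2 (verification and optimality).} On $B_{r_k}(x_k)$ one has $|x|\sim t_k$, so $f(x)\le A_k\le 2^{\lambda(\alpha-2)}c_1\varepsilon\,|x|^{-\lambda(\alpha-2)}$; choosing $c_1\le 2^{-\lambda(\alpha-2)}$ yields the first inequality of \eqref{op2} (trivial outside the bumps). For the second inequality at $x\in B_{s_k}(x_k)$, a direct change of variables produces
\[
\int_{B_\varepsilon}\frac{f(y)\,dy}{|x-y|^{\beta-2}}\ \gtrsim\ A_kr_k^{n+2-\beta},
\]
while $|x|^{-(\beta-2)}\gtrsim t_k^{-(\beta-2)}$, so by the choice of $B_k$ and small $c_2$, the second inequality of \eqref{op2} holds provided $\{t_k\}$ is sparse enough that contributions from bumps indexed by $j\ne k$ are harmless perturbations (they decay like $|x_j-x_k|^{-(\beta-2)}$). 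For the optimality conditions, $|x_k|^{\lambda(\alpha-2)}f(x_k)=c_1\varepsilon$ gives \eqref{op3}, while
\[
\frac{g(x_k)}{h(t_k)\bigl(t_k^{-\sigma(\beta-2)}+t_k^{-\lambda(\alpha-2)\sigma(\beta-2)/n}\bigr)}\ \gtrsim\ \frac{c_2\varepsilon}{h(t_k)}\ \longrightarrow\ \infty
\]
gives \eqref{op4}, since $h(t_k)\to 0$.

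\textbf{Paragraph 3 (main obstacle).} The central difficulty is reconciling $f\in L^1(\R^n)$ with the lower bound $A_kr_k^{n+2-\beta}\gtrsim t_k^{-\lambda(\alpha-2)(\beta-2)/n}$ when $\lambda(\alpha-2)>n$: this last bound forces $r_k\gtrsim t_k^{\lambda(\alpha-2)/n}$, and the simple choice $r_k\sim t_k^{\lambda(\alpha-2)/n}$ makes the per-spike mass $A_kr_k^n$ bounded below by a positive constant, so $\sum A_kr_k^n$ diverges. To overcome this I would exploit the mutual boosting between the two inequalities of \eqref{op2}: the $g$-spike contribution to the first inequality allows $A_k$ to exceed $c_1\varepsilon\,t_k^{-\lambda(\alpha-2)}$ on a narrower support, lowering the per-spike $L^1$ mass while preserving the potential $A_kr_k^{n+2-\beta}$. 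This leads to a coupled cascade of conditions on $(A_k,B_k,r_k,s_k)$ that I would solve self-consistently by iterating the two bounds, with the sequence $\{t_k\}$ taken extremely sparse (e.g.\ $t_k=2^{-2^k}$ or faster) so that all remainder estimates close. The detailed bookkeeping, as well as the smoothing and truncation needed to obtain $C^\infty(\R^n\setminus\{0\})\cap L^1(\R^n)$, is the main technical step and is the part I expect to occupy most of Section \ref{sec-large-fg}.
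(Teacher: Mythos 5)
Your bump construction and sparse-sequence framework are in the right spirit, and you correctly identify the central obstacle in Paragraph~3: with heights $A_k\sim\varepsilon\,t_k^{-\lambda(\alpha-2)}$ and radii $r_k\sim t_k^{\lambda(\alpha-2)/n}$, the per-spike $L^1$ mass $A_kr_k^n$ is bounded below, so $f\notin L^1$. However, the resolution you propose — increasing $A_k$ above the base rate via the $g$-feedback in the first inequality of \eqref{op2}, and then closing a coupled cascade of conditions "self-consistently" — is not carried out and is substantially more complicated than necessary. It is unclear that the cascade closes: increasing $A_k$ requires a larger $g$-potential $\int g/|x-y|^{\alpha-2}$ near $x_k$, which requires a larger $B_k$, which by the second inequality requires an even larger $f$-potential, and so on; nothing in the proposal shows this terminates favorably.

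The paper's resolution is much simpler and exploits a source of slack your proposal overlooks: condition \eqref{op4} only asks that $g(x_k)$ beat $h(t_k)\cdot t_k^{-\lambda(\alpha-2)\sigma(\beta-2)/n}$, not the full rate $t_k^{-\lambda(\alpha-2)\sigma(\beta-2)/n}$, and $h\to0$. Accordingly, one keeps the height fixed at the base rate $M_j=\varepsilon/(2|x_j|)^{\lambda(\alpha-2)}$ (so \eqref{op3} holds exactly) and \emph{shrinks} the common radius to $r_j=(\ep_j/\ep)^{1/n}(2|x_j|)^{\lambda(\alpha-2)/n}$ with $\ep_j=h(|x_j|)^{n/(2\sigma(n+2-\beta))}\to0$. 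The per-spike mass then becomes $M_jr_j^n=\ep_j$, summable after passing to a subsequence, giving $f\in L^1$. The potential of each $f$-bump drops by the factor $\ep_j^{(n+2-\beta)/n}=\sqrt{h(|x_j|)}$, so $g(x_j)\sim \sqrt{h(|x_j|)}\,|x_j|^{-\lambda(\alpha-2)\sigma(\beta-2)/n}$, and dividing by $h(|x_j|)\cdot|x_j|^{-\lambda(\alpha-2)\sigma(\beta-2)/n}$ gives $1/\sqrt{h(|x_j|)}\to\infty$, i.e.\ \eqref{op4}. No mutual boosting is needed; the $g$-bumps use the \emph{same} radius $r_j$, so only one radius sequence has to be tuned, and there is no cascade. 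You also omit the easy case $\lambda(\alpha-2)<n$, where no bump construction is needed at all: $f=\ep|x|^{-\lambda(\alpha-2)}\chi$, $g=\ep|x|^{-\sigma(\beta-2)}\chi$ (with a cutoff $\chi$) already satisfy \eqref{op1}--\eqref{op4}, since then $|x|^{-\sigma(\beta-2)}$ dominates the bracket in \eqref{op4}.
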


The following theorem deals with the case that $(\la,\sigma)\in D$. In
this case there exist pointwise bounds for neither $f$ nor $g$.

\begin{theorem}\label{large-fg}
Suppose $\varepsilon>0$, $\alpha,\beta\in(2,n+2)$,
\begin{equation}\label{A1}
\lambda> \frac{n}{\a-2} \quad\text{and}\quad
\sigma>\frac{n+2-\a}{\beta-2}+\frac{n}{\beta-2}\frac{1}{\lambda}.
\end{equation}
Let $h:(0,1)\to (0,\infty)$ be a continuous function such that
$\lim_{t\to 0^+} h(t)=\infty$. Then there exist solutions
\begin{equation}\label{A2}
f,g\in C^\infty(\R^n\setminus\{0\})\cap L^1(\R^n)
\end{equation}
of the system \eqref{op2}
such that
\begin{equation}\label{A3}
f(x)\not={\cal O}(h(|x|))\quad\text{as }x\to 0
\end{equation}
and
\begin{equation}\label{A4}
g(x)\not={\cal O}(h(|x|))\quad\text{as }x\to 0.
\end{equation}
\end{theorem}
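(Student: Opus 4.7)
The plan is to construct $f$ and $g$ directly, as locally finite sums of smooth bumps concentrated on a geometric sequence $x_k\to 0$ along a fixed ray, with heights that blow up faster than $h(|x_k|)$. The strict hypothesis \eqref{A1} placing $(\lambda,\sigma)$ in region $D$ enters in exactly one place: it guarantees that the three competing demands---the convolution inequalities \eqref{op2}, the integrability condition \eqref{A2}, and the pointwise blow-up requirements \eqref{A3}--\eqref{A4}---can be met simultaneously by a suitable choice of bump exponents.

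Pick a nonnegative radial $\psi\in C_c^\infty(\R^n)$ with $\operatorname{supp}\psi\subset B(0,1/2)$ and $\psi(0)=1$. Fix a unit vector $e$, set $|x_k|=(\varepsilon/2)\,4^{-k}$, $x_k=|x_k|\,e$, and choose radii $\rho_k\in(0,|x_k|/4)$ later. Set $\psi_k(x)=\psi((x-x_k)/\rho_k)$ and
\[
f=\sum_{k\ge1}a_k\psi_k,\qquad g=\sum_{k\ge1}b_k\psi_k,
\]
with nonnegative $a_k,b_k$ to be determined. The supports are pairwise disjoint and contained in $B_\varepsilon(0)\setminus\{0\}$, so \eqref{A2} reduces to $\sum a_k\rho_k^n,\sum b_k\rho_k^n<\infty$, and outside $\bigcup_k\operatorname{supp}\psi_k$ the system \eqref{op2} is trivial. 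On $\operatorname{supp}\psi_k$ one has $|x|\sim|x_k|$, and a standard lower bound on the contribution of the $k$th bump gives
\[
\int_{|y|<\varepsilon}\frac{g(y)\,dy}{|x-y|^{\alpha-2}}\;\ge\;c_0\,b_k\,\rho_k^{\,n-(\alpha-2)}
\]
(the $\psi$-integral converges since $\alpha<n+2$), with an analogous bound for the second convolution. Provided $\rho_k$ decays fast enough, the tails from other bumps are of strictly smaller order, and \eqref{op2} reduces to the discrete inequalities
\[
a_k\le\varepsilon\bigl(|x_k|^{-(\alpha-2)}+c_0\,b_k\,\rho_k^{\,n-(\alpha-2)}\bigr)^{\lambda},\qquad b_k\le\varepsilon\bigl(|x_k|^{-(\beta-2)}+c_0\,a_k\,\rho_k^{\,n-(\beta-2)}\bigr)^{\sigma}.
\]

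Write $a_k=\rho_k^{-\gamma_a}$, $b_k=\rho_k^{-\gamma_b}$. In the regime where the convolution dominates each pole term, the discrete inequalities reduce, as $\rho_k\to 0$, to the linear pair $\gamma_a\le\lambda[\gamma_b-(n-\alpha+2)]$ and $\gamma_b\le\sigma[\gamma_a-(n-\beta+2)]$. Since $\lambda\sigma>1$ in region $D$, this feasibility wedge has its corner at
\[
\gamma_a^\ast=\frac{\lambda[\sigma(n-\beta+2)+(n-\alpha+2)]}{\lambda\sigma-1},\qquad \gamma_b^\ast=\frac{\sigma[\lambda(n-\alpha+2)+(n-\beta+2)]}{\lambda\sigma-1}.
\]
A direct algebraic check shows $\gamma_a^\ast<n$ is equivalent to the second inequality of \eqref{A1}, and a similar quadratic-in-$\lambda$ computation using both inequalities of \eqref{A1}, the quadrant assumption \eqref{quadrant}, and $\alpha,\beta\in(2,n+2)$ yields $\gamma_b^\ast<n$ as well. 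Choose $(\gamma_a,\gamma_b)$ strictly inside the wedge with $\gamma_a,\gamma_b<n$, which is possible precisely because both critical values are below $n$ and $\lambda\sigma>1$; this produces strict slack in both discrete inequalities for all sufficiently small $\rho_k$.

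To conclude, choose $\rho_k>0$ small enough that $\rho_k<|x_k|/5$, that $\sum\rho_k^{n-\gamma_a}$ and $\sum\rho_k^{n-\gamma_b}$ converge, and that $\rho_k^{-\gamma_a},\rho_k^{-\gamma_b}\ge k\,h(|x_k|)$. These conditions are mutually compatible since shrinking $\rho_k$ helps all of them (the two $L^1$ sums using $\gamma_a,\gamma_b<n$). Then $f(x_k)=a_k$ and $g(x_k)=b_k$ exceed $k\,h(|x_k|)$, so both grow strictly faster than $h(|x_k|)$ along $x_k\to 0$, establishing \eqref{A3} and \eqref{A4}. The main technical obstacle is not the algebra above but a clean verification that the discrete inequalities really suffice---i.e.\ that at every $x\in\operatorname{supp}\psi_k$ (not merely at $x_k$) the convolutions are controlled from below as claimed, and that the tails $\sum_{j\ne k}b_j\rho_j^n\,|x_k-x_j|^{-(\alpha-2)}$ (and analogously for $f$) are of strictly smaller order than the local contribution. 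This will be handled by splitting the tail at $x_k$ into the pieces with $|x_j|>|x_k|$ and $|x_j|<|x_k|$ and estimating each using the geometric decay of $|x_j|,\rho_j$ together with the summability of $\sum a_j\rho_j^n$ and $\sum b_j\rho_j^n$.
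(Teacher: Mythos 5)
Your proposal is correct and follows the same basic route as the paper's own proof: both construct $f$ and $g$ as sums of disjointly supported smooth bumps on a geometric sequence $x_k\to 0$, verify \eqref{op2} on each bump's support by a lower bound on the local convolution contribution, and then shrink the radii to force $L^1$-summability and the required blow-up. The difference is mainly bookkeeping. The paper works with two auxiliary sequences (a summable $\{\ep_j\}$ and the radii $\{r_j\}$) and lists conditions on $r_j$ whose exponents are positive thanks to \eqref{A1}; you instead tie both heights to a single radius via $a_k=\rho_k^{-\gamma_a}$, $b_k=\rho_k^{-\gamma_b}$ and isolate the arithmetic in a feasibility question for the exponent pair $(\gamma_a,\gamma_b)$. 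That reframing is cleaner, and your identification of the wedge corner $(\gamma_a^\ast,\gamma_b^\ast)$ together with the conditions $\gamma_a^\ast,\gamma_b^\ast<n$ is exactly right. For the record, $\gamma_b^\ast<n$ needs only \eqref{A1} and $\alpha,\beta\in(2,n+2)$; neither \eqref{quadrant} nor the reduction to $\sigma<n/(\beta-2)$ that the paper performs is required. Indeed, inserting the lower bound on $\sigma$ from \eqref{A1} into the inequality $\lambda\sigma(\alpha-2)>\sigma(n+2-\beta)+n$ reduces it to a quadratic in $\lambda$ which factors as
\[
\bigl[\lambda(\alpha-2)-n\bigr]\bigl[\lambda(n+2-\alpha)+(n+2-\beta)\bigr]\geq 0,
\]
and both factors are positive by \eqref{A1} and $\alpha,\beta<n+2$.

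Two remarks on your closing paragraph. The tail estimates you flag as the ``main technical obstacle'' are not actually needed: \eqref{op2} only requires the convolutions on the right side to be \emph{large}, and the tails $\sum_{j\neq k}b_j\rho_j^n|x_k-x_j|^{-(\alpha-2)}$ as well as the pole terms $|x|^{-(\alpha-2)}$ are nonnegative, so they can only increase the right-hand side. What you do need, and what you should state, is that for \emph{every} $x\in\operatorname{supp}\psi_k$ one has $\int b_k\psi_k(y)|x-y|^{2-\alpha}\,dy\geq c_0\,b_k\rho_k^{\,n-(\alpha-2)}$; this is immediate since $|x-y|\leq\rho_k$ whenever $x,y\in\operatorname{supp}\psi_k\subset B_{\rho_k/2}(x_k)$ and $\alpha>2$. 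Combined with $f\leq a_k$ on $\operatorname{supp}\psi_k$, this and the strict wedge inequalities already close the argument for $\rho_k$ sufficiently small, with no tail analysis whatsoever.
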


\section{Preliminary results}\label{prem}

\subsection{Nonlinear Riesz potentials}\label{riesz}
Let $B$ be a ball in $\R^n$, $n\geq 3,$ and $f\in L^\infty(B)$ be a nonnegative function. For any $a\in (0,n)$ we define  the Riesz potential ${\mathbf I}_a f$ of order $a$ by
$$
{\mathbf I}_a f(x)=\intl_{B} \frac{f(y)}{|x-y|^{n-a}}dy \quad\mbox{ for all }x\in B.
$$
We also set
$$
{\mathbf U}_{a,b,\sigma} f:={\mathbf I_a}\left(({\mathbf I}_b f)^\sigma\right),
$$
where $a,b\in (0,n)$. If $a=b$ then ${\mathbf U}_{a,a,\sigma}f$ is the Havin-Maz'ya potential \cite{HM1972}.

\begin{prop}\label{cor_nonlin}
  Let $a,b\in (0,n)$ and $\sigma>\frac{a}{n-b}$. Then there exists a
  constant $C=C(n,\sigma, a,b)>0$ such that $$ \| {\mathbf
    U}_{a,b,\sigma} f\|_\infty\leq C\|f\|_1^{\frac{a+b\sigma}{n}}
  \|f\|_\infty^{\frac{\sigma(n-b)-a}{n}}\quad\mbox{ for all }f\in
  L^\infty(B)\,,f\geq 0.
$$
\end{prop}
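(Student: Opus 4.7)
The plan is to reduce the problem to two applications of the Hardy--Littlewood--Sobolev (HLS) inequality, formulated in Lorentz spaces so that the endpoint obstructions do not force a case analysis.

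First I would observe that for each fixed $x$ the Riesz kernel $y\mapsto|x-y|^{-(n-a)}$ lies in the weak Lorentz space $L^{n/(n-a),\infty}(\R^n)$ with a norm independent of $x$. By H\"older's inequality in Lorentz spaces this yields the pointwise bound
$$
\mathbf{I}_a g(x)\le C(n,a)\,\|g\|_{L^{n/a,1}(B)},
$$
valid for every nonnegative $g\in L^\infty(B)$. I would then apply this with $g=(\mathbf{I}_b f)^\sigma$ and use the standard rescaling identity $\|h^\sigma\|_{L^{p,q}}=\|h\|_{L^{p\sigma,q\sigma}}^\sigma$ to rewrite
$$
\|\mathbf{U}_{a,b,\sigma} f\|_\infty\le C\,\|\mathbf{I}_b f\|_{L^{n\sigma/a,\sigma}(B)}^{\sigma}.
$$

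The second step is the Lorentz form of the HLS inequality, $\|\mathbf{I}_b f\|_{L^{p_1,q}}\le C\|f\|_{L^{p_2,q}}$ whenever $1/p_2-1/p_1=b/n$ and $1<p_2<p_1<\infty$. Choosing $p_1=n\sigma/a$ and $q=\sigma$ forces $p_2=n\sigma/(a+b\sigma)$, and the constraint $p_2>1$ is precisely our hypothesis $\sigma>a/(n-b)$. To convert the Lorentz norm of $f$ back to the usual $L^1$ and $L^\infty$ norms, a short calculation with the decreasing rearrangement $f^*$ (using $f^*(t)\le\|f\|_\infty$ and $\int_0^\infty f^*\,dt=\|f\|_1$) gives the interpolation bound
$$
\|f\|_{L^{r,\sigma}}\le C(r,\sigma)\,\|f\|_1^{1/r}\|f\|_\infty^{\,1-1/r},\qquad 1<r<\infty.
$$
Substituting $r=p_2$, so that $1/r=(a+b\sigma)/(n\sigma)$ and $1-1/r=(\sigma(n-b)-a)/(n\sigma)$, and raising to the $\sigma$ power produces exactly the bound of the proposition.

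The main technical obstacle is the endpoint behaviour of the HLS step: the forbidden values $p_2=1$ and $p_1=\infty$ correspond respectively to the borderline cases $\sigma=a/(n-b)$ (where the estimate degenerates) and $a=0$ (where $\mathbf{I}_a$ ceases to regularise), so the strict inequality in the hypothesis is essential. An alternative, more elementary route avoids Lorentz spaces by applying Hedberg's splitting $\mathbf{I}_a g(x)\le Cr^a\|g\|_\infty+r^{-(n-a)}\|g\|_1$ with optimally chosen $r$, combined with the pointwise estimate $\|\mathbf{I}_b f\|_\infty\le C\|f\|_1^{b/n}\|f\|_\infty^{(n-b)/n}$ and an $L^\sigma$-HLS control of $\int_B(\mathbf{I}_b f)^\sigma$; this yields the same conclusion but forces a split according to whether $\sigma$ exceeds $n/(n-b)$.
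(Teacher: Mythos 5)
Your main argument is correct but takes a genuinely different route from the paper. The paper proceeds via two applications of Hedberg's inequality $\|\mathbf{I}_\gamma f\|_\infty\le C\|f\|_p^{\gamma p/n}\|f\|_\infty^{1-\gamma p/n}$ together with a classical $L^s\to L^{p\sigma}$ Riesz potential estimate; it picks auxiliary exponents $s\in(1,n/b)$, $p\in(1,n/a)$ satisfying $p\sigma=ns/(n-bs)$ (which is where $\sigma>a/(n-b)$ enters) and then interpolates $\|f\|_s\le\|f\|_1^{1/s}\|f\|_\infty^{(s-1)/s}$. Your Lorentz-space route — kernel in $L^{n/(n-a),\infty}$, Lorentz--H\"older duality $\mathbf{I}_a g(x)\le C\|g\|_{L^{n/a,1}}$, the dilation identity $\|h^\sigma\|_{L^{p,q}}=\|h\|_{L^{p\sigma,q\sigma}}^\sigma$, the diagonal Lorentz form of HLS, and then the rearrangement interpolation $\|f\|_{L^{r,\sigma}}\le C\|f\|_1^{1/r}\|f\|_\infty^{1-1/r}$ — reaches the same exponents with no free parameters to tune, and makes the role of the hypothesis $\sigma>a/(n-b)$ transparent as exactly the condition $p_2>1$. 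The one technical point worth flagging is that when $\sigma<1$ you are working in quasi-Banach Lorentz spaces $L^{p,\sigma}$; the dilation identity and the rearrangement bound still hold verbatim, and the Lorentz HLS inequality does extend to secondary index $q<1$ (e.g.\ via O'Neil's rearrangement estimates together with Hardy's inequalities, which are valid for all $q>0$), but this deserves a citation or a word of justification since it is not the textbook statement. Your closing remark about the ``more elementary route'' via Hedberg's splitting is essentially what the paper actually does. What your Lorentz argument buys is a shorter, parameter-free derivation; what the paper's approach buys is that it stays within the standard $L^p$ framework and needs only Hedberg's inequality and the classical Sobolev embedding for Riesz potentials, both of which the paper cites directly.
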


\begin{proof} Let us first recall Hedberg's inequality \cite{H1972}
\neweq{hedberg}
\|{\mathbf I}_\gamma f\|_\infty\leq C(n,\gamma,p) \|f\|_p^{\frac{\gamma p}{n}}\|f\|_\infty^{1-\frac{\gamma p}{n}}\quad\mbox{ for all }0<\gamma<n\,,\; 1\leq p<\frac{n}{\gamma}.
\endeq

Let $g=({\mathbf I}_b f)^\sigma$. Using \eq{hedberg} we have
\neweq{hedb1}
\|{\mathbf I}_a g\|_\infty\leq C(n,a,p) \|g\|_p^{\frac{a p}{n}}\|g\|_\infty^{1-\frac{a p}{n}}\quad\mbox{ for all } \; 1\leq p<\frac{n}{a}
\endeq
and
$$
\|{\mathbf I}_b f\|_\infty\leq C(n,b) \|f\|_1^{\frac{b}{n}}\|f\|_\infty^{1-\frac{b}{n}}.
$$
This last estimate implies
\neweq{hedb3}
\|g\|_\infty= \|{\mathbf I}_b f\|_\infty^\sigma \leq C \|f\|_1^{\frac{\sigma b}{n}}\|f\|_\infty^{\sigma(1-\frac{b}{n})}.
\endeq
Since $\sigma>\frac{a}{n-b}$, we can find $s\in(1,n/b)$ and
$p\in(1,n/a)$ such that
\neweq{hedb2}
p\sigma=\frac{ns}{n-bs}.
\endeq
By standard Riesz potential estimates (see \cite[Lemma 7.12]{GT1983}) we have
\neweq{hedb4}
\|g\|_p=\|{\mathbf I}_b f\|^\sigma_{p\sigma} \leq C \|f\|_s^\sigma.
\endeq
We now use \eq{hedb3} and \eq{hedb4} in \eq{hedb1} to deduce
\neweq{hedb5}
\|{\mathbf U}_{a,b,\sigma} f\|_\infty=\|{\mathbf I}_a g\|_\infty\leq C \|f\|_s^{\frac{a p\sigma}{n}} \|f\|_1^{\frac{\sigma b}{n}(1-\frac{ap}{n})}\|f\|_\infty^{\sigma(1-\frac{b}{n})(1-\frac{a p}{n})}.
\endeq
Finally, using the estimate
$$
\|f\|_s\leq \|f\|_1^{\frac{1}{s}}\|f\|_\infty^{\frac{s-1}{s}}
$$
in \eq{hedb5} we obtain
$$
\begin{aligned}
\|{\mathbf U}_{a,b,\sigma} f\|_\infty&\leq C \|f\|_1^{\frac{a p\sigma}{ns}+ \frac{\sigma b}{n}(1-\frac{ap}{n})}\|f\|_\infty^{\frac{a p\sigma}{n}\frac{s-1}{s}+\sigma(1-\frac{b}{n})(1-\frac{a p}{n})}\\
&= C \|f\|_1^{\frac{a+b\sigma}{n}} \|f\|_\infty^{\frac{\sigma(n-b)-a}{n}}
\end{aligned}
$$
by \eq{hedb2}.
\end{proof}
\subsection{Further estimates}\label{intg}
In this part we collect some results which will be used in our proofs. A very important tool in our approach is the following result due to Brezis and Lions which presents a representation formula for nonnegative superharmonic functions in a punctured ball of $\R^n$.

\begin{lemma}\label{brezislions}$($see \cite{BL1981}$)$
Let $u$ be a $C^2$ nonnegative superharmonic function in $B_{2r}(0)\setminus\{0\}\subset \R^n$, $n\geq 3$, for some $r>0$. Then
$$
u,-\Delta u\in L^1(B_r(0))
$$
and there exist $m\geq 0$, $c=c(n)>0$ and a bounded harmonic function $h:B_r(0)\to \R$ such that
$$
u(x)=m|x|^{2-n}+c\intl_{|y|<r}\frac{-\Delta
u(y)}{|x-y|^{n-2}}dy+h(x) \quad\mbox{ for }0<|x|<r.
$$
\end{lemma}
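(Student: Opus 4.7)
The argument splits into an integrability step ($u,-\Delta u\in L^1(B_r(0))$) and a Riesz-type representation step that extracts the singular fundamental-solution contribution at the origin.

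\textbf{Integrability.} The spherical mean $M(\rho):=\frac{1}{|\partial B_\rho|}\int_{\partial B_\rho}u\,d\sigma$ of a $C^2$ nonnegative superharmonic function on $B_{2r}(0)\setminus\{0\}$ satisfies $(\rho^{n-1}M'(\rho))'\le 0$ on $(0,2r)$, obtained by applying the divergence theorem to $\Delta u\le 0$ on concentric annuli. Integrating this ODE inequality twice yields a growth bound of the form $M(\rho)\le A+B\rho^{2-n}$ for constants $A,B\ge 0$. Since $\rho^{n-1}(A+B\rho^{2-n})=A\rho^{n-1}+B\rho$ is integrable on $(0,r)$, this gives $u\in L^1(B_r(0))$. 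To bound $\int_{B_r}(-\Delta u)$, fix $\zeta\in C_c^\infty(B_{2r})$ with $\zeta\equiv 1$ on $B_r$, and a family $\eta_\varepsilon\in C^\infty(\R^n)$ supported outside $B_\varepsilon(0)$, equal to $1$ outside $B_{2\varepsilon}(0)$, with $|\Delta\eta_\varepsilon|\le C\varepsilon^{-2}$. Since $\psi_\varepsilon:=\zeta\eta_\varepsilon\in C_c^\infty(B_{2r}\setminus\{0\})$, integration by parts yields $\int(-\Delta u)\psi_\varepsilon=\int u(-\Delta\psi_\varepsilon)$. The right side splits into a bounded $\Delta\zeta$-contribution (since $u$ is $C^2$ on the annulus $\{r\le|x|\le 2r\}$) and a shell contribution controlled by $C\varepsilon^{-2}\int_{B_{2\varepsilon}\setminus B_\varepsilon}u\le C\varepsilon^{-2}\cdot\varepsilon^{n}M(\varepsilon)\le C$, using the $O(\varepsilon^{2-n})$ bound on $M$. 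Monotone convergence (valid since $-\Delta u\ge 0$ and $\psi_\varepsilon\uparrow\zeta$) delivers $-\Delta u\in L^1(B_r)$.

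\textbf{Representation.} Extend $u$ to $B_r$ by $u(0):=\liminf_{x\to 0}u(x)$; by the standard extension principle the lower semicontinuous regularization is superharmonic on $B_r$, so its distributional Laplacian is a nonpositive distribution of order zero, i.e., $-\Delta u=\nu$ for some nonnegative Radon measure $\nu$ on $B_r$. Off the origin, $\nu$ coincides with the classical $L^1$ function $-\Delta u$; at the origin it can carry at most a point mass $m_0\delta_0$ with $m_0\ge 0$. The Riesz decomposition theorem for superharmonic functions on $B_r$ then gives
\[
u(x)=\int_{B_r}G_r(x,y)\,d\nu(y)+H(x),
\]
where $G_r$ is the Green's function of $B_r$ and $H$ is harmonic. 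Writing $G_r(x,y)=c|x-y|^{2-n}-\gamma_r(x,y)$ with $\gamma_r(\cdot,y)$ harmonic and vanishing on $\partial B_r$, and separating out the point mass at the origin,
\[
u(x)=cm_0\,|x|^{2-n}+c\int_{B_r}\frac{-\Delta u(y)\,dy}{|x-y|^{n-2}}+h(x),
\]
with $h(x):=H(x)-\int_{B_r}\gamma_r(x,y)\,d\nu(y)$. The function $h$ is harmonic on $B_r$ (since $\gamma_r(\cdot,y)$ is for every $y$) and bounded on $B_r$ because $\nu(B_r)<\infty$ and $\gamma_r$ is uniformly bounded on $B_r\times B_r$. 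Setting $m:=cm_0\ge 0$ recovers the stated formula.

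\textbf{Main obstacle.} The technical crux is the shell-integral bound in the integrability step: one must align the $\varepsilon^{-2}$ growth of $\Delta\psi_\varepsilon$ with the sharp $O(\varepsilon^{2-n})$ estimate on $M(\varepsilon)$, and the sharpness of the latter is essential for closing the estimate — any weaker bound on $M$ would leave an $\varepsilon$-divergent shell term. The conversion of $u$ into a genuine measure on the full ball (to invoke Riesz decomposition) is clean but non-elementary, requiring the extension-by-$\liminf$ principle; once it is granted, the representation step reduces to bookkeeping between the fundamental solution and the Green-function correction.
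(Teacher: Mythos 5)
The paper does not prove this lemma; it cites Brezis--Lions \cite{BL1981}, whose argument stays in the distributional framework throughout: after the $L^1$ bounds (obtained by essentially your cutoff computation), one shows $-\Delta u=f+m\delta_0$ in $\mathcal{D}'(B_r)$ with $f\in L^1(B_r)$ and $m\geq 0$, and then $h:=u-cm|x|^{2-n}-c\int_{B_r}f(y)|x-y|^{2-n}\,dy$ is distributionally harmonic, hence smooth by Weyl's lemma, and bounded near $\partial B_r$ because each of the three terms is bounded there. Your integrability step is correct and essentially mirrors the Brezis--Lions computation (spherical-mean bound $M(\rho)\leq A+B\rho^{2-n}$ feeding the shell estimate, then monotone convergence). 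Your representation step instead takes the heavier potential-theoretic route (superharmonic extension across the polar set $\{0\}$, then Riesz decomposition); that route is defensible in principle, but it is not what the cited reference does.

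There is, however, a genuine gap in your boundedness argument for $h$: the regular part $\gamma_r$ of the Green's function is \emph{not} uniformly bounded on $B_r\times B_r$. Along the diagonal one computes $\gamma_r(x,x)=c_n\bigl((r^2-|x|^2)/r\bigr)^{2-n}\to\infty$ as $|x|\to r$, so the bound on $\int_{B_r}\gamma_r(x,y)\,d\nu(y)$ cannot follow from $\nu(B_r)<\infty$ alone; you would need a separate argument controlling the contribution of $\nu$ near $\partial B_r$. The simplest repair is to drop the Green-function bookkeeping and argue directly: set $w(x):=c\int_{B_r}\frac{-\Delta u(y)}{|x-y|^{n-2}}\,dy$ and $h:=u-m|x|^{2-n}-w$. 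Then $\Delta h=0$ in $\mathcal{D}'(B_r)$ (this uses exactly the identity $-\Delta u=f+m_0\delta_0$ already encoded in your Riesz measure), so $h$ is smooth; and $h$ is bounded near $\partial B_r$ because there $u$ is $C^2$, $|x|^{2-n}$ is bounded, and $w$ is bounded --- split $w$ into $\int_{B_{r/2}}$ (where $|x-y|\geq r/4$ and $-\Delta u\in L^1$) and $\int_{B_r\setminus B_{r/2}}$ (where $-\Delta u$ is continuous, hence bounded). Together with local boundedness of the harmonic function $h$ on compact subsets of $B_r$, this gives $h$ bounded on all of $B_r$.
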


\begin{lemma}\label{optimal} $($see \cite[Lemma 5.1]{GTV2014}$)$
Let $\varphi:(0,1)\to (0,1)$ be a continuous function such that $\lim_{t\to 0^+}\varphi(t)=0$.
Let $\{x_j\}\subset\R^n$, $n\geq 3$, be a sequence satisfying
\neweq{ch3_gr5}
0<4|x_{j+1}|<|x_j|<\frac{1}{2}\quad\text{and}\quad
\sum_{j=1}^\infty \varphi(|x_j|)<\infty,
\endeq
and $\{r_j\}\subset \R$ be such that $0<r_j\leq |x_j|/2$.

Then there exist a positive constant $A=A(n)$ and a positive function $u\in C^\infty(\R^n\setminus\{0\})$ such that
\neweq{ch3_gr6}
0\leq -\Delta u\leq \frac{\varphi(|x_j|)}{r_j^n}\quad\mbox{ in }B_{r_j}(x_j),
\endeq
\neweq{ch3_gr7}
 -\Delta u=0\quad\mbox{ in }\R^n\setminus\Big(\{0\}\cup \bigcup_{j=1}^\infty B_{r_j}(x_j)\Big),
\endeq
\neweq{ch3_gr8}
u\geq 1\quad\mbox{ in }\R^n\setminus\{0\},
\endeq
and
\neweq{ch3_gr9}
 u\geq \frac{A\varphi(|x_j|)}{r_j^{n-2}} \quad\mbox{ in } B_{r_j}(x_j).
\endeq
\end{lemma}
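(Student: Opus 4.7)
The plan is to build $u$ as $1 + \sum u_j$, where each $u_j$ is the Newtonian potential of a smooth bump localized in $B_{r_j}(x_j)$. Fix a nonnegative radial $\phi \in C_c^\infty(B_1(0))$ with $\|\phi\|_\infty = 1$, set $\phi_j(y) = \phi((y-x_j)/r_j)$ and $c_j = \varphi(|x_j|)/r_j^n$, and define
$$u_j(x) = \frac{1}{(n-2)\omega_{n-1}} \int_{\R^n} \frac{c_j \phi_j(y)}{|x-y|^{n-2}}\,dy, \qquad u := 1 + \sum_{j=1}^\infty u_j.$$
Then each $u_j \in C^\infty(\R^n)$, $u_j \ge 0$, and $-\Delta u_j = c_j \phi_j$. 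The hypotheses $r_j \le |x_j|/2$ and $4|x_{j+1}| < |x_j|$ force the closed balls $\overline{B_{r_j}(x_j)}$ to be pairwise disjoint and separated from $0$: for $i < j$, iterating $|x_{k+1}| < |x_k|/4$ gives $|x_j| \le |x_i|/4$, whence $|x_i - x_j| \ge 3|x_i|/4 > 5|x_i|/8 \ge r_i + r_j$.

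Next I would verify that the series converges in $C^\infty$ on compact subsets of $\R^n \setminus \{0\}$. Off the supports of the $\phi_j$'s a direct kernel estimate yields
$$|D^k u_j(x)| \le C_k \,\varphi(|x_j|)\,|x-x_j|^{-(n-2+k)},$$
and for any compact $K \subset \R^n \setminus \{0\}$ and all sufficiently large $j$, $|x-x_j|$ is bounded below uniformly on $K$; combined with $\sum \varphi(|x_j|) < \infty$ this gives uniform convergence of the series and of each of its derivatives on $K$. Hence $u \in C^\infty(\R^n \setminus \{0\})$ and the Laplacian may be computed term by term.

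The four conclusions then follow directly. Inside $B_{r_j}(x_j)$ disjointness makes every other $\phi_i$ vanish, so $-\Delta u = c_j \phi_j \in [0, \varphi(|x_j|)/r_j^n]$, giving \eqref{ch3_gr6}; outside $\{0\} \cup \bigcup \overline{B_{r_j}(x_j)}$ all $\phi_i$ vanish, giving \eqref{ch3_gr7}; and $u \ge 1$ since every $u_j \ge 0$, giving \eqref{ch3_gr8}. For \eqref{ch3_gr9}, when $x \in B_{r_j}(x_j)$ and $y \in \mathrm{supp}\,\phi_j$ one has $|x-y| \le 2 r_j$, so
$$u_j(x) \ge \frac{c_j}{(n-2)\omega_{n-1}(2r_j)^{n-2}} \int_{\R^n} \phi_j(y)\,dy = A(n)\, c_j r_j^2 = A(n)\,\frac{\varphi(|x_j|)}{r_j^{n-2}},$$
with $A(n) = 2^{2-n}\bigl(\int \phi\bigr)/[(n-2)\omega_{n-1}]$.

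The main technical point I would flag is arranging the construction so that $-\Delta u$ is bounded by exactly $\varphi(|x_j|)/r_j^n$ without a stray constant, which is what the normalization $\|\phi\|_\infty = 1$ accomplishes; disjointness of the balls, smoothness via uniform convergence of the tail, and the universal lower bound on $u_j$ in its supporting ball are then routine consequences of the geometric hypotheses on $\{x_j\}$ and $\{r_j\}$.
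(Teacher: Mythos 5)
Your construction is correct and is the natural one; the paper itself gives no proof of this lemma but cites \cite[Lemma 5.1]{GTV2014}, which proceeds by the same device of summing Newtonian potentials of normalized bumps supported in the prescribed balls and adding a constant. Your verification of disjointness of the balls, the $C^\infty$ convergence away from the origin via the tail estimate together with $\sum\varphi(|x_j|)<\infty$, and the universal lower bound $u_j\ge A(n)\varphi(|x_j|)r_j^{-(n-2)}$ in $B_{r_j}(x_j)$ are all sound.
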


\begin{lemma}\label{pastgeneral}
Let $\alpha\in (0,n)$ and $v\in L^1(B_1(0))$ be a nonnegative function such that
$$
v(x)={\mathcal O}(|x|^{-\gamma})\quad\mbox{ as }x\to 0
$$
for some $\gamma\geq 0$. Then
\neweq{pastgeneralestimate}
\intl_{|y|<1}\frac{v(y)\,dy}{|x-y|^{\alpha}} ={\mathcal
O}(|x|^{-\alpha})+o\Big(|x|^{-\frac{\gamma
\alpha}{n}}\Big)\quad\mbox{ as }x\to 0.
\endeq
\end{lemma}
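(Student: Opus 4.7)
The plan is to split the integral into regions according to the position of $y$ relative to both the origin and $x$, and to apply a Hedberg-type optimization in the regime where the singular behavior of $v$ at the origin dominates. Write $I(x):=\int_{|y|<1}v(y)|x-y|^{-\alpha}\,dy$. First I would fix $\varepsilon>0$ and choose $r_0\in(0,1)$ small enough that the pointwise bound $v(y)\le C_0|y|^{-\gamma}$ holds on $B_{r_0}(0)$; since $v\in L^1(B_1(0))$, I may also assume $\varphi(r_0):=\int_{|y|<r_0}v(y)\,dy<\varepsilon$. For $|x|<r_0/10$ the tail contribution $\int_{r_0\le|y|<1}v(y)|x-y|^{-\alpha}\,dy$ is bounded above by $(9r_0/10)^{-\alpha}\|v\|_1=O(1)$, so it remains to estimate the integral over $|y|<r_0$.

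For this main piece I would apply the Hedberg-type splitting
\[
\int_{|y|<r_0,\,|x-y|<\rho}\frac{v(y)}{|x-y|^\alpha}\,dy+\int_{|y|<r_0,\,|x-y|\ge\rho}\frac{v(y)}{|x-y|^\alpha}\,dy=:I_A(x)+I_B(x),
\]
with $\rho\le|x|/2$ to be chosen. The piece $I_B$ is dominated by $\rho^{-\alpha}\varphi(r_0)\le\varepsilon\rho^{-\alpha}$, using only the $L^1$ information on $v$. On $I_A$ the restriction $\rho\le|x|/2$ forces $|y|\ge|x|/2$, so the pointwise bound gives $v(y)\le C_0(|x|/2)^{-\gamma}$, whence $I_A\le C|x|^{-\gamma}\rho^{n-\alpha}$. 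When $\gamma\le n$, I would simply take $\rho=|x|/2$ and use $n-\alpha-\gamma\ge -\alpha$ to conclude $I_A+I_B=O(|x|^{-\alpha})$, which already yields the first term of the claimed estimate (the $o$-term then being automatic).

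The harder case is $\gamma>n$, where $|x|^{-\gamma\alpha/n}$ dominates $|x|^{-\alpha}$ and an honest little-$o$ bound must be produced. Here I would balance the two contributions by choosing
\[
\rho=\varepsilon^{1/n}|x|^{\gamma/n},
\]
which does satisfy $\rho\le|x|/2$ for $|x|$ sufficiently small because $\gamma/n>1$; a direct computation then shows $I_A+I_B\le C\varepsilon^{(n-\alpha)/n}|x|^{-\gamma\alpha/n}$. Combining with the $O(1)$ tail yields $|x|^{\gamma\alpha/n}I(x)\le o(1)+C\varepsilon^{(n-\alpha)/n}$ as $x\to 0$, so $\limsup_{x\to 0}|x|^{\gamma\alpha/n}I(x)\le C\varepsilon^{(n-\alpha)/n}$, and letting $\varepsilon\to 0$ gives $I(x)=o(|x|^{-\gamma\alpha/n})$. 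The main obstacle, and what forces the two-case structure, is obtaining the little-$o$ rather than merely big-$O$ behavior: it is precisely the vanishing $\varphi(r_0)\to 0$ as $r_0\to 0$, coming from $v\in L^1$, which allows the constant in front of $|x|^{-\gamma\alpha/n}$ to be driven to zero in the Hedberg balance.
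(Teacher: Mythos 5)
Your proof is correct, and it follows a genuinely different route from the paper's. The paper works along an arbitrary sequence $x_j\to 0$, replaces $v$ on $B_{|x_j|/2}(x_j)$ by the constant $C|x_j|^{-\gamma}$ supported on the smaller concentric ball $B_{r_j}(x_j)$ chosen so the two have \emph{equal mass}, and then uses a bathtub/rearrangement inequality to show the replacement can only increase the Riesz potential evaluated at $x_j$; since the mass $\int_{|y-x_j|<|x_j|/2}v\to 0$, the equalizing radius satisfies $r_j=o(|x_j|^{\gamma/n})$ and the little-$o$ falls out at once, uniformly in $\gamma\ge 0$, with no case distinction. Your argument is instead a Hedberg balance: split at radius $\rho$, bound the near piece $I_A$ by the pointwise bound $v(y)\le C|x|^{-\gamma}$ (legitimate because $\rho\le|x|/2$ forces $|y|>|x|/2$) and the far piece $I_B$ by the small $L^1$ mass $\varphi(r_0)<\varepsilon$, then optimize $\rho$. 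The constraint $\rho\le|x|/2$ forces your $\gamma\le n$ versus $\gamma>n$ dichotomy, which the paper avoids; on the other hand your version is more transparent about exactly why the little-$o$ appears (it is the $\varepsilon$ from $L^1$ absolute continuity, driven to zero after the $\limsup$), and it stays closer to the familiar Hedberg template. Both arguments are sound, complete, and of comparable length.
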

\begin{proof}
Choose $C>0$ and $R\in(0,1/4)$ such that
$$
v(y)\leq C|y|^{-\gamma} \quad \mbox{ for }0<|y|<2R.
$$
Let $\{x_j\}\subset B_R(0)\setminus\{0\}$ be a sequence which
converges to 0. Then
\neweq{z1estimate}
v(y)\leq C|x_j|^{-\gamma}\quad\mbox{ for }|y-x_j|<\frac{|x_j|}{2}
\endeq
and it suffices to prove the estimate \eq{pastgeneralestimate} with
$x$ replaced with $x_j$.

Define $r_j\in(0,|x_j|/2)$ by
\neweq{z2estimate}
\intl_{|y-x_j|<r_j} C|x_j|^{-\gamma}
dy=\intl_{|y-x_j|<\frac{|x_j|}{2}} v(y)dy\to 0\quad\mbox{ as }j\to
\infty.
\endeq
Then $r_j=o(|x_j|^{\gamma/n})$ as $j\to \infty$. Also, using
\eq{z1estimate} and \eq{z2estimate} we have
$$
\begin{aligned}
\intl_{|y-x_j|<r_j}\frac{C|x_j|^{-\gamma}-v(y)}{|y-x_j|^\alpha}dy
&\geq
\intl_{|y-x_j|<r_j}\frac{C|x_j|^{-\gamma}-v(y)}{r_j^\alpha}dy\\
&=\intl_{r_j<|y-x_j|<\frac{|x_j|}{2}}\frac{v(y)}{r_j^\alpha}dy\\
&\geq
\intl_{r_j<|y-x_j|<\frac{|x_j|}{2}}\frac{v(y)}{|y-x_j|^\alpha}dy
\end{aligned}
$$
which yields
$$
\intl_{|y-x_j|<\frac{|x_j|}{2}}\frac{v(y) dy}{|y-x_j|^\alpha}\leq
\intl_{|y-x_j|<r_j}\frac{C|x_j|^{-\gamma} dy}{|y-x_j|^\alpha}.
$$
Using this last estimate, for $j$ large we have
$$
\begin{aligned}
\intl_{|y|<1}\frac{v(y)\,dy}{|x_j-y|^{\alpha}}
&=\intl_{|y|<1, |y-x_j|>\frac{|x_j|}{2}}\frac{v(y)\,dy}{|y-x_j|^{\alpha}}+\intl_{|y-x_j|<\frac{|x_j|}{2}}\frac{v(y)\,dy}{|y-x_j|^{\alpha}}\\
&\leq C |x_j|^{-\alpha}+\intl_{|y-x_j|<\frac{|x_j|}{2}}\frac{v(y)\,dy}{|y-x_j|^{\alpha}}\\
&\leq C \left[ |x_j|^{-\alpha}+\intl_{|y-x_j|<r_j}\frac{|x_j|^{-\gamma}\,dy}{|y-x_j|^{\alpha}}\right]\\
&\le C \left[ |x_j|^{-\alpha}+r_j^{n-\alpha}|x_j|^{-\gamma} \right]\\
&={\cal O}(|x_j|^{-\alpha})+o\Big(|x_j|^{-\frac{\gamma \alpha}{n}}\Big)\quad\mbox{ as }j\to \infty.
\end{aligned}
$$
\end{proof}

\begin{corollary}\label{past}
Let $u$ be a $C^2$ nonnegative function in
$B_r(0)\setminus\{0\}\subset \R^n$, $n\geq 3$, $r>0$ such that for
some $\gamma\ge 0$ we have
$$
0\leq -\Delta u\leq C|x|^{-\gamma}\quad\mbox{ for } 0<|x|<r.
$$
Then
$$
u(x)={\mathcal O}(|x|^{2-n})
+o\Big(|x|^{-\frac{\gamma(n-2)}{n}}\Big)\quad\mbox{ as }x\to 0.
$$
\end{corollary}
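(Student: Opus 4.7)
The plan is to combine the Brezis--Lions representation (Lemma \ref{brezislions}) with the Riesz-potential estimate for pointwise-bounded densities (Lemma \ref{pastgeneral}), specialized to the exponent $\alpha = n-2$.

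First, I would observe that the hypotheses imply $u$ is $C^2$, nonnegative, and superharmonic on $B_r(0)\setminus\{0\}$. Applying Lemma \ref{brezislions} with its radius parameter taken to be $r/2$ (so that its ball of definition $B_{2\cdot r/2}(0) = B_r(0)$ matches ours) produces constants $m \geq 0$, $c = c(n) > 0$, and a bounded harmonic function $h$ on $B_{r/2}(0)$ such that
\[
u(x) = m\,|x|^{2-n} \;+\; c\!\int_{|y|<r/2} \frac{-\Delta u(y)}{|x-y|^{n-2}}\,dy \;+\; h(x),
\qquad 0<|x|<r/2.
\]
The first term is already $\mathcal{O}(|x|^{2-n})$, and since $h$ is bounded near $0$ it is also $\mathcal{O}(1) \subseteq \mathcal{O}(|x|^{2-n})$. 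So everything reduces to estimating the integral term.

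Next, set $v := -\Delta u$ on $B_{r/2}(0)$ and extend it by zero on $B_1(0)\setminus B_{r/2}(0)$ (shrinking $r$ if necessary so that $r/2 \leq 1$; this is harmless since the conclusion concerns only the behavior as $x \to 0$). Then $v$ is a nonnegative $L^1(B_1(0))$ function (the $L^1$ bound is part of Lemma \ref{brezislions}) satisfying the pointwise bound $v(y) = \mathcal{O}(|y|^{-\gamma})$ as $y \to 0$ by hypothesis. Applying Lemma \ref{pastgeneral} with $\alpha = n-2 \in (0,n)$ (using $n \geq 3$) yields
\[
\int_{|y|<r/2} \frac{v(y)\,dy}{|x-y|^{n-2}} \;\leq\; \int_{|y|<1} \frac{v(y)\,dy}{|x-y|^{n-2}} \;=\; \mathcal{O}(|x|^{-(n-2)}) + o\bigl(|x|^{-\gamma(n-2)/n}\bigr) \quad\text{as }x\to 0.
\]
Combining the three contributions in the representation formula produces the asserted bound $u(x) = \mathcal{O}(|x|^{2-n}) + o(|x|^{-\gamma(n-2)/n})$.

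There is no substantive obstacle here: the corollary is a direct concatenation of the two preceding results, with $\alpha = n-2$ as the only numerical choice. The only minor bookkeeping step is the rescaling/zero-extension argument that allows Lemma \ref{pastgeneral} (stated on $B_1$) to be applied when the Brezis--Lions representation delivers an integral over $B_{r/2}$.
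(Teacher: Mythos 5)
Your proof is correct and follows exactly the paper's own route: the authors likewise obtain the corollary by applying the Brezis--Lions representation (Lemma \ref{brezislions}) and then Lemma \ref{pastgeneral} with $v=-\Delta u$ and $\alpha=n-2$. The extra bookkeeping you supply (zero-extension of $-\Delta u$ and matching the radii) is a harmless elaboration of the same argument.
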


\begin{proof}
  We apply the representation formula in Lemma \ref{brezislions} and
  then Lemma \ref{pastgeneral} with $v=-\Delta u$ and $\alpha=n-2$.
\end{proof}

\begin{lemma}\label{int1}
Let $\alpha, \beta <n$. Then there exists a constant $C=C(n,\alpha,\beta)>0$ such that
$$
\intl_{|y|<2}\frac{dy}{|x-y|^{\beta}|y|^{\alpha}}\leq
\left\{
\begin{aligned}
&\frac{C}{|x|^{\alpha+\beta-n}}&\quad \mbox{ if }\alpha+\beta>n,\\
&C\ln\frac{2}{|x|}&\quad \mbox{ if }\alpha+\beta=n,\\
&C &\quad \mbox{ if }\alpha+\beta<n.
\end{aligned}
\right. \quad\mbox{ for }0<|x|<1.
$$
\end{lemma}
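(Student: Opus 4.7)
The plan is to decompose the domain $B_2(0)$ into three regions based on the relative sizes of $|y|$, $|x-y|$, and $|x|$, handle each piece with elementary bounds, and extract the trichotomy from an annular integral over $|y|^{-\alpha-\beta}$.

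Specifically, for $0<|x|<1$, I would partition $B_2(0)$ into the three pairwise disjoint regions
\[
R_1=\{|y|\le |x|/2\},\quad R_2=\{|x-y|\le |x|/2\},\quad R_3=B_2(0)\setminus(R_1\cup R_2).
\]
On $R_1$ the triangle inequality gives $|x-y|\ge |x|/2$, so the $|x-y|^{-\beta}$ factor is bounded by $C|x|^{-\beta}$ and the remaining integral $\int_{|y|\le |x|/2}|y|^{-\alpha}\,dy$ converges (using $\alpha<n$) to $C|x|^{n-\alpha}$, yielding $C|x|^{n-\alpha-\beta}$. The bound on $R_2$ is symmetric, using $|y|\ge |x|/2$ and $\beta<n$, and also produces $C|x|^{n-\alpha-\beta}$.

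The crux is $R_3$, where I would further split into $R_{3a}=R_3\cap\{|y|\le 2|x|\}$ and $R_{3b}=R_3\cap\{|y|>2|x|\}$. On $R_{3a}$ both $|y|$ and $|x-y|$ are comparable to $|x|$, and the region has volume $O(|x|^n)$, so its contribution is again $C|x|^{n-\alpha-\beta}$. On $R_{3b}$ one has $|x-y|\ge |y|/2$, so the integral reduces to
\[
C\int_{2|x|<|y|<2}\frac{dy}{|y|^{\alpha+\beta}},
\]
whose behavior depending on the sign of $\alpha+\beta-n$ produces exactly the three cases in the statement: if $\alpha+\beta>n$ the annular integral is $O(|x|^{n-\alpha-\beta})$; if $\alpha+\beta=n$ it equals $C\ln(2/|x|)$ up to a constant; and if $\alpha+\beta<n$ it is uniformly bounded.

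Adding the contributions from $R_1$, $R_2$, $R_{3a}$, $R_{3b}$ and observing that the $|x|^{n-\alpha-\beta}$ terms are dominated by the appropriate quantity in each of the three cases (they are unbounded and of the correct order when $\alpha+\beta>n$, dominated by $\ln(2/|x|)$ when $\alpha+\beta=n$, and uniformly bounded for $|x|<1$ when $\alpha+\beta<n$) yields the claimed trichotomy. The only real obstacle is bookkeeping in the $R_3$ decomposition to make sure the borderline case $\alpha+\beta=n$ is handled with the correct logarithmic weight; everything else is elementary.
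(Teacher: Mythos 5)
Your argument is correct, but it takes a genuinely different route from the paper's. You partition $B_2(0)$ directly into the four regions $R_1,R_2,R_{3a},R_{3b}$ determined by the relative sizes of $|y|$, $|x-y|$, and $|x|$, estimate each by elementary comparability, and reduce the tail to the radial integral $\int_{2|x|<|y|<2}|y|^{-\alpha-\beta}\,dy$. The paper instead exploits homogeneity: substituting $x=r\xi$, $y=r\eta$ with $r=|x|$ (so $|\xi|=1$) pulls out the factor $r^{n-\alpha-\beta}$ and leaves a single rescaled integral over $|\eta|<2/r$, which the paper splits at $|\eta|=2$; the inner part is a fixed constant $C(n,\alpha,\beta)$ and the outer part reduces to $\int_{2<|\eta|<2/r}|\eta|^{-\alpha-\beta}\,d\eta$. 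The scaling approach makes the exponent $n-\alpha-\beta$ appear automatically and confines all the case analysis to one one-dimensional integral, whereas your decomposition produces a term of order $|x|^{n-\alpha-\beta}$ from each of $R_1,R_2,R_{3a}$ that must then be checked against the claimed bound in each regime (trivial, but bookkeeping). One small caveat in your write-up: on $R_1$ you invoke only the lower bound $|x-y|\ge|x|/2$ to get $|x-y|^{-\beta}\le C|x|^{-\beta}$, which requires $\beta\ge 0$; for $\beta<0$ you would also need the matching upper bound $|x-y|\le 3|x|/2$ (which indeed holds on $R_1$). This does not affect the conclusion, since $\beta<0$ together with $\alpha<n$ forces $\alpha+\beta<n$, but since the hypothesis is only $\alpha,\beta<n$ it is worth making the two-sided comparison explicit, as you do implicitly on $R_{3a}$.
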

\begin{proof} We could use the convolution formula (see Stein \cite[pg. 118]{stein})
\neweq{stein}
\intl_{\R^n} \frac{dy}{|x-y|^{\beta}|y|^{\alpha}}=\frac{C(n,\alpha, \beta)}{|x|^{\alpha+\beta-n}}\quad\mbox{ for all }x\in \R^n,
\endeq
which holds whenever $\alpha+\beta>n$.
However, we shall give here a direct and simpler proof.

Let $x\in B_1(0)\setminus\{0\}$ and $r=|x|$. Under the change of
variable $x=r\xi$, $y=r\eta$,  we have $|\xi|=1$ and
$$
\begin{aligned}
\intl_{|y|<2}\frac{dy}{|x-y|^{\beta}|y|^{\alpha}}&=r^{n-\alpha-\beta}\intl_{|\eta|<\frac{2}{r}} \frac{d\eta}{|\xi-\eta|^{\beta}|\eta|^{\alpha}}\\
&=r^{n-\alpha-\beta}\left[ \intl_{|\eta|<2} \frac{d\eta}{|\xi-\eta|^{\beta}|\eta|^{\alpha}}+
\intl_{2<|\eta|<\frac{2}{r}} \frac{d\eta}{|\xi-\eta|^{\beta}|\eta|^{\alpha}}\right]\\
&= r^{n-\alpha-\beta}\left[ C(n,\alpha,\beta)+
\intl_{2<|\eta|<\frac{2}{r}} \frac{d\eta}{|\xi-\eta|^{\beta}|\eta|^{\alpha}}\right]\\
&\leq Cr^{n-\alpha-\beta}
\left[1+\intl_{2<|\eta|<\frac{2}{r}} \frac{d\eta}{|\eta|^{\alpha+\beta}}\right]\\
&\leq \left\{
\begin{aligned}
&\frac{C}{r^{\alpha+\beta-n}} &\quad \mbox{ if }\alpha+\beta>n,\\
&C\ln\frac{2}{r}&\quad \mbox{ if }\alpha+\beta=n,\\
&C &\quad \mbox{ if }\alpha+\beta<n.
\end{aligned}
\right.
\end{aligned}
$$
 \end{proof}

\begin{corollary}\label{corstein}
Let $\alpha,\beta <n$ and $R>0$. Then there exists a constant $C=C(n,\alpha,\beta)>0$ such that
for all $x,z\in B_R(0)$, $x\neq z$ we have
\neweq{corstein1}
\intl_{|y|<R}\frac{dy}{|x-y|^{\beta}|y-z|^{\alpha}}\leq
\left\{
\begin{aligned}
&\frac{C}{|x-z|^{\alpha+\beta-n}}&\quad \mbox{ if }\alpha+\beta>n,\\
&C\ln\frac{4R}{|x-z|}&\quad \mbox{ if }\alpha+\beta=n,\\
&\frac{C}{R^{\alpha+\beta-n}} &\quad \mbox{ if }\alpha+\beta<n.
\end{aligned}
\right.
\endeq
\end{corollary}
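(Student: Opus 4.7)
The plan is to reduce the integral in \eqref{corstein1} to the setup of Lemma \ref{int1} by translation followed by rescaling, and then handle the residual scale regime by a direct boundedness argument.

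First, I would substitute $w = y - z$. Since $z \in B_R(0)$, the shifted ball $\{w : |w+z|<R\}$ is contained in $B_{2R}(0)$, which gives
\[
\int_{|y|<R}\frac{dy}{|x-y|^{\beta}|y-z|^{\alpha}} \le \int_{|w|<2R}\frac{dw}{|\tilde x - w|^{\beta}|w|^{\alpha}}, \qquad \tilde x := x-z,
\]
with $0 < |\tilde x| < 2R$. Then I would rescale by $w = R\eta$, $\tilde x = R\xi$ to obtain
\[
R^{n-\alpha-\beta}\int_{|\eta|<2}\frac{d\eta}{|\xi-\eta|^{\beta}|\eta|^{\alpha}}, \qquad 0<|\xi|<2.
\]

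When $|\xi| < 1$, Lemma \ref{int1} applies verbatim to the $\eta$-integral and produces, in the three respective regimes, the bounds $C|\xi|^{n-\alpha-\beta}$, $C\ln(2/|\xi|)$, and $C$. Unraveling $|\xi| = |x-z|/R$ and multiplying by $R^{n-\alpha-\beta}$ yields precisely the three right-hand sides of \eqref{corstein1}, with the logarithm bounded by $C\ln(4R/|x-z|)$.

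The remaining case $1 \le |\xi| < 2$ (equivalently $R \le |x-z| < 2R$) is not directly covered by Lemma \ref{int1}, but here the two singularities at $\eta = 0$ and $\eta = \xi$ are separated inside $B_2(0)$. Splitting $B_2(0)$ into a small ball around $0$, a small ball around $\xi$, and the complement gives an $\eta$-integral bounded by a constant $C(n,\alpha,\beta)$, since each piece is a standard integrable singularity (using $\alpha,\beta<n$). This produces the crude bound $CR^{n-\alpha-\beta}$. To conclude, I would use the comparability $R \le |x-z| < 2R$ to check that $CR^{n-\alpha-\beta}$ is dominated, up to constants depending only on $n,\alpha,\beta$, by each of the three right-hand sides in \eqref{corstein1}.

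The whole argument is essentially routine reduction to Lemma \ref{int1}; the only slightly delicate point is the matching step in the range $|\xi|\ge 1$ when $\alpha+\beta>n$, where one uses $R > |x-z|/2$ to convert the $R^{-(\alpha+\beta-n)}$ bound into a $|x-z|^{-(\alpha+\beta-n)}$ bound with a constant factor of $2^{\alpha+\beta-n}$.
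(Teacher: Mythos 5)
Your proof is correct and follows essentially the same route as the paper: translate by $z$, rescale, and apply Lemma \ref{int1}. The only difference is that the paper rescales by $2R$ rather than $R$, which forces $|\xi|=|x-z|/(2R)<1$ automatically and so avoids the residual case $1\le|\xi|<2$ that you handle (correctly) by the separated-singularities argument and the comparison $R\le|x-z|<2R$.
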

\begin{proof}
Under the change of variables
$\xi=\frac{x-z}{2R}$, $\eta=\frac{y-z}{2R}$ we find
$\xi\in B_1(0)\setminus\{0\}$ and thus by Lemma \ref{int1} we have
$$
\begin{aligned}
\intl_{|y|<R}\frac{dy}{|x-y|^{\beta}|y-z|^{\alpha}}& \le (2R)^{n-\alpha-\beta}\intl_{|\eta|<2}\frac{d\eta}{|\xi-\eta|^{\beta}|\eta|^{\alpha}}\\
& \leq \left\{
\begin{aligned}
&\frac{C}{(2R|\xi|)^{\alpha+\beta-n}}&\quad \mbox{ if }\alpha+\beta>n,\\
&C\ln\frac{2}{|\xi|}&\quad \mbox{ if }\alpha+\beta=n,\\
&\frac{C}{(2R)^{\alpha+\beta-n}} &\quad \mbox{ if }\alpha+\beta<n.
\end{aligned}
\right.
\end{aligned}
$$
This clearly implies \eq{corstein1}.
\end{proof}

\begin{lemma}\label{int2}
Let $\sigma>0$ and $\gamma\in (0,n)$. There exists a constant $C=C(n,\sigma, \gamma)>0$ such that
$$
\intl_{|y|<1}\frac{\ln^\sigma \frac{4}{|y-z|}}{|x-y|^{\gamma}}dy\leq C\quad\mbox{ for all }x,z\in B_1(0).
$$
\end{lemma}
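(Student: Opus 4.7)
The plan is to decouple the two singular factors via H\"older's inequality. Since $\gamma\in (0,n)$, we have $n/\gamma>1$, so we may fix an exponent $p'\in\bigl(1,\,n/\gamma\bigr)$ and let $p=p'/(p'-1)$ be its conjugate. Applying H\"older to the integrand written as a product of $\ln^\sigma\frac{4}{|y-z|}$ and $|x-y|^{-\gamma}$ yields
$$
\intl_{|y|<1}\frac{\ln^\sigma \frac{4}{|y-z|}}{|x-y|^{\gamma}}\,dy
\;\le\; \left(\intl_{|y|<1}\ln^{\sigma p}\frac{4}{|y-z|}\,dy\right)^{1/p}
\left(\intl_{|y|<1}\frac{dy}{|x-y|^{\gamma p'}}\right)^{1/p'}.
$$

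Next I would bound each factor by a constant depending only on $n$, $\sigma$, and $\gamma$, using translation invariance and the fact that $z,x\in B_1(0)$ implies $\{y:|y|<1\}\subset B_2(z)\cap B_2(x)$. Substituting $w=y-z$ in the first factor gives
$$
\intl_{|y|<1}\ln^{\sigma p}\frac{4}{|y-z|}\,dy \;\le\; \intl_{|w|<2}\ln^{\sigma p}\frac{4}{|w|}\,dw,
$$
which is finite because $\ln^{\sigma p}(4/|w|)$ has only logarithmic growth at the origin and is therefore in $L^1(B_2(0))$ for any finite $p$. Substituting $w=y-x$ in the second factor gives
$$
\intl_{|y|<1}\frac{dy}{|x-y|^{\gamma p'}} \;\le\; \intl_{|w|<2}\frac{dw}{|w|^{\gamma p'}},
$$
which is finite precisely because our choice of $p'$ ensures $\gamma p'<n$.

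There is essentially no obstacle here beyond verifying that an admissible $p'$ exists, which is immediate from the hypothesis $\gamma<n$. Combining the two bounds yields a constant $C=C(n,\sigma,\gamma)$ as required.
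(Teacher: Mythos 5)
Your proof is correct and is essentially the same argument as the paper's: the paper simply cites the Riesz potential estimate of Gilbarg--Trudinger (Lemma 7.12), which for the $L^p\to L^\infty$ case is exactly the H\"older decoupling you carry out explicitly, using that $\ln^{\sigma p}(4/|\cdot|)\in L^1(B_2(0))$ for every finite $p$ and that $\gamma p'<n$ for a suitable conjugate exponent. Your version is just a self-contained unpacking of that citation.
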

\begin{proof}
This follows from Riesz potential estimates (see \cite[Lemma 7.12]{GT1983}).
\end{proof}

\begin{lemma}\label{uv}
Suppose $u$ and $v$ are  $C^2(\R^n\setminus\{0\})\cap L^1(\R^n)$ positive
solutions of \eq{nonloc} where $\lambda,\sigma\ge 0$ and
$\alpha,\beta\in (0,n)$. Then
\begin{equation}\label{lap-uv-L1}
-\Delta u,-\Delta v\in L^1(B_1(0))
\end{equation}
and for some positive constant $C$ we have
\begin{equation}\label{nonloc1}
\left\{
\begin{aligned}
&0\leq -\Delta u(x)\leq C\left( \intl_{|y|<1} \frac{v(y)\,dy}{|x-y|^{\alpha}} \right) ^\lambda \\
&0\leq -\Delta v(x)\leq C\left(\intl_{|y|<1} \frac{u(y)\,dy}{|x-y|^{\beta}} \right) ^\sigma
\end{aligned}
\quad \mbox{ for } 0<|x|<1,
\right.
\end{equation}
\begin{equation}\label{B-L}
\left\{
\begin{aligned}
&u(x)\leq C\left(|x|^{2-n}+\intl_{|y|<1}\frac{-\Delta
    u(y)\,dy}{|x-y|^{n-2}} \right) \\
&v(x)\leq C\left(|x|^{2-n}+\intl_{|y|<1}\frac{-\Delta
    v(y)\,dy}{|x-y|^{n-2}} \right)
\end{aligned}
\quad \mbox{ for } 0<|x|<1,
\right.
\end{equation}
and
\begin{equation}\label{combine}
\left\{
\begin{aligned}
-\Delta u(x)&\leq C\left[\int\limits_{|y|<1} \frac{dy}{|y-x|^{\alpha} |y|^{n-2}}+\int\limits_{|z|<1} -\Delta v(z)\left(\int\limits_{|y|<1} \frac{dy}{|x-y|^{\alpha}|y-z|^{n-2}}\right)dz \right]^\lambda\\
-\Delta v(x)&\leq C\left[\int\limits_{|y|<1} \frac{dy}{|y-x|^{\beta} |y|^{n-2}}+\int\limits_{|z|<1} -\Delta u(z)\left(\int\limits_{|y|<1} \frac{dy}{|x-y|^{\beta}|y-z|^{n-2}}\right)dz \right]^\sigma
\end{aligned}
\right.
\end{equation}
for $0<|x|<1$.
\end{lemma}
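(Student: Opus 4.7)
The plan is to prove the four conclusions in the order listed. The first conclusion \eqref{lap-uv-L1} is immediate from Brezis--Lions (Lemma \ref{brezislions}) applied to $u$ and $v$, which by \eqref{nonloc} are nonnegative and superharmonic in $B_2(0)\setminus\{0\}$.

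For \eqref{nonloc1}, I start from the convolution inequality in \eqref{nonloc} and split $\int_{\R^n} v(y)|x-y|^{-\alpha}\,dy$ into a local piece over $|y|<1$ and a tail over $|y|\ge 1$. The tail is uniformly bounded for $|x|<1$: on $\{|y|\ge 2\}$ we have $|x-y|\ge |y|/2$, giving a bound by a multiple of $\|v\|_{L^1(\R^n)}$, and on the annulus $\{1\le|y|\le 2\}$ the function $v$ is continuous (hence bounded) while $\int_{1\le|y|\le 2}|x-y|^{-\alpha}\,dy$ is uniformly bounded in $|x|<1$ since $\alpha<n$. To absorb this bounded tail into the local integral I use that $v$ is positive and continuous on $\R^n\setminus\{0\}$, so $v\ge c_0>0$ on the compact shell $\{1/4\le|y|\le 3/4\}$, which forces $\int_{|y|<1}v(y)|x-y|^{-\alpha}\,dy\ge c_1>0$ uniformly for $|x|<1$. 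Hence the full convolution is bounded by a constant multiple of the local integral, and raising to the power $\lambda$ gives the first line of \eqref{nonloc1}; the second line is proved identically.

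For \eqref{B-L}, Lemma \ref{brezislions} with $r=1$ yields a representation $u(x)=m|x|^{2-n}+c\int_{|y|<1}(-\Delta u(y))/|x-y|^{n-2}\,dy+h(x)$ on $0<|x|<1$, with $h$ bounded harmonic. Since $|x|^{2-n}\ge 1$ for $|x|\le 1$, the bounded term $h$ is dominated by a constant multiple of $|x|^{2-n}$, which yields \eqref{B-L}; the same argument works for $v$.

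Finally, \eqref{combine} is obtained by direct substitution: I plug the bound \eqref{B-L} for $v(y)$ into the right-hand side of the first inequality in \eqref{nonloc1}, distribute the factor $|x-y|^{-\alpha}$ over the sum $|y|^{2-n}+\int(-\Delta v(z))/|y-z|^{n-2}\,dz$, and interchange the order of integration in the resulting double integral by Fubini; the symmetric argument gives the bound on $-\Delta v$. The only mildly delicate point in the entire proof is securing the uniform positive lower bound on the local convolution that allows the bounded tail to be reabsorbed into the local integral in the proof of \eqref{nonloc1}; everything else is direct substitution and application of Brezis--Lions.
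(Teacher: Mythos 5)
Your proof is correct and follows essentially the same route as the paper's: Brezis--Lions gives \eqref{lap-uv-L1} and \eqref{B-L}, the convolution in \eqref{nonloc} is split into a local piece over $|y|<1$ and a uniformly bounded tail which is reabsorbed by a uniform positive lower bound on the local piece, and \eqref{combine} is obtained by substituting \eqref{B-L} into \eqref{nonloc1}. The only cosmetic difference is your lower bound on the local piece: the paper simply notes that $|x-y|<2$ whenever $|x|,|y|<1$, so $\int_{|y|<1}v(y)|x-y|^{-\alpha}\,dy \ge 2^{-\alpha}\int_{|y|<1}v(y)\,dy>0$, whereas you restrict further to the shell $1/4\le|y|\le 3/4$ and invoke positivity and continuity of $v$ there; both yield the needed constant.
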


\begin{proof}
Lemma \ref{brezislions} implies \eqref{lap-uv-L1} holds.
Since
\[
\intl_{|y|<1}\frac{v(y)\,dy}{|x-y|^\alpha}
>\intl_{|y|<1}\frac{v(y)\,dy}{2^\alpha}=:C_1>0\quad\text{for }|x|<1
\]
and
\[
\intl_{|y|>1}\frac{v(y)\,dy}{|x-y|^\alpha}
\le \intl_{1<|y|<2}\frac{\max_{1\le|y|\le 2}v(y)}{|x-y|^\alpha}\,dy
+\intl_{|y|>2}v(y)\,dy\le C_2
\quad\text{for }|x|<1
\]
we see that
\[
\intl_{\R^n}\frac{v(y)\,dy}{|x-y|^\alpha}
\le \left(1+\frac{C_2}{C_1}\right)\intl_{|y|<1}\frac{v(y)\,dy}{|x-y|^\alpha}
\quad\text{for }|x|<1.
\]
Thus the first line of \eqref{nonloc1} follows from
\eqref{nonloc}. The second line of \eqref{nonloc1} is proved
similarly.

Inequalities \eqref{B-L} follow from Lemma \ref{brezislions}.
Substituting \eqref{B-L} in \eqref{nonloc1} we get \eqref{combine}.
\end{proof}

\section{Proof of Theorem \ref{thm1}}\label{sec-thm1}

By Lemma \ref{uv}, $u$ and $v$ satisfy \eqref{lap-uv-L1}--\eqref{combine}.

If $\beta<2$ then \eqref{combine}, \eq{lap-uv-L1}, Lemma \ref{int1}
and Corollary \ref{corstein} yield
$$
-\Delta v(x)\leq C\left[1+\intl_{|z|<1}-\Delta
  u(z)dz\right]^\sigma\leq C\quad\mbox{ for } 0<|x|<1
$$
and from Corollary \ref{past} we find $v(x)$ satisfies \eqref{thm1estv}.

Assume next that $\beta=2$. Then using Lemma \ref{int1} and Corollary
\ref{corstein} we obtain from \eq{combine} that
\begin{equation}\label{new1}
-\Delta v(x)\leq C\left[\ln \frac{4}{|x|}+ \int\limits_{|z|<1}
  \Big(\ln\frac{4}{|x-z|}\Big)  (-\Delta u(z))\, dz
\right]^\sigma\quad\text{for }0<|x|<1.
\end{equation}
Since increasing $\sigma$ increases the right side of \eqref{new1}, it
follows from \eqref{new1} that there exists $\gamma>1$ such that $u$
and $v$ satisfy
\[
-\Delta v(x)\leq C\left[\ln \frac{4}{|x|}+ \int\limits_{|z|<1}
  \Big(\ln\frac{4}{|x-z|}\Big)  (-\Delta u(z))\, dz
\right]^\gamma\quad\text{for }0<|x|<1.
\]
Thus by Jensen's inequality, we have
$$
\begin{aligned}
-\Delta v(x)&\leq C\left[\ln^\gamma \frac{2}{|x|}+ \left(\int\limits_{|z|<1} \Big(\ln\frac{4}{|x-z|}\Big)  (-\Delta u(z)) dz \right)^\gamma\right]\\
&= C\left[\ln^\gamma \frac{2}{|x|}+ \|\Delta u\|_{L^1(B_1(0))}^\gamma \left(\int\limits_{|z|<1} \Big(\ln\frac{4}{|x-z|}\Big)  \frac{-\Delta u(z)}{\|\Delta u\|_{L^1(B_1(0))}  } dz \right)^\gamma\right]\\
&\leq C\left[\ln^\gamma \frac{2}{|x|}+  \int\limits_{|z|<1}
  \Big(\ln^\gamma \frac{4}{|x-z|}\Big)  (-\Delta u(z))  dz\right]
\quad\mbox{ for } 0<|x|<1.
\end{aligned}
$$
This last estimate combined with \eq{B-L}, \eqref{lap-uv-L1}, and Lemma
\ref{int2}
yields
$$
\begin{aligned}
v(x)& \leq C\left[ |x|^{2-n}+\intl_{|y|<1}\frac{-\Delta v(y)}{|x-y|^{n-2}}dy\right]\\
&\leq C\left[|x|^{2-n}+\intl_{|y|<1} \frac{\ln^\gamma \frac{2}{|y|}}{|x-y|^{n-2}}dy +
\intl_{|z|<1}-\Delta u(z)\left(\intl_{|y|<1} \frac{\ln^\gamma \frac{4}{|y-z|}}{|x-y|^{n-2}}dy\right) dz  \right]\\
&\leq C\Big[|x|^{2-n}+\|\Delta u\|_{L^1(B_1(0))}\Big]\\
&\leq C|x|^{2-n} \quad\mbox{ for }0<|x|<1.
\end{aligned}
$$
We have thus established \eq{thm1estv} for $\beta\leq 2$. Now, from the first equation of \eq{nonloc1} and Lemma \ref{int1} we find
$$
\begin{aligned}
-\Delta u(x)&\leq C\left(\int\limits_{|y|<1} \frac{v(y)\,dy}{|x-y|^{\alpha}}\right)^\lambda\leq C\left(  \int\limits_{|y|<1}  \frac{dy}{|x-y|^{\alpha}|y|^{n-2}}\right)^\lambda\\
& \leq \left\{
\begin{aligned}
& C |x|^{-\lambda(\alpha-2)}&\quad \mbox{ if }\alpha>2,\\
&C\ln^\lambda\frac{2}{|x|}&\quad \mbox{ if }\alpha=2,\\
&C &\quad \mbox{ if }\alpha<2.
\end{aligned}
\right. \quad\mbox{ for }0<|x|<1.
\end{aligned}
$$
If $\alpha\leq 2$ we use \eq{B-L} and Lemma \ref{int2} to deduce $u(x)={\mathcal O}(|x|^{2-n})$ as $x\to 0$. If $\alpha>2$ then we apply directly Corollary \ref{past} to derive
$$
u(x)={\mathcal O}(|x|^{2-n})+o\Big(|x|^{-\frac{\lambda(\alpha-2)(n-2)}{n}}\Big)\quad\mbox{ as }x\to 0,
$$
and complete the proof of \eq{thm1estu}.

\section{Proof of Theorem \ref{optimalthm1}}

Define $\varphi:(0,1)\to (0,1)$ by $\varphi=\sqrt{h}$. Let
$\{x_j\}\subset \R^n$ be a sequence satisfying \eq{ch3_gr5} and
\begin{equation}\label{radius}
r_j=|x_j|^{\frac{\lambda(\alpha-2)}{n}}<<|x_j|\quad\mbox{ as }j\to \infty.
\end{equation}
By Lemma \ref{optimal} there exist a positive constant $A=A(n)$ and a
positive function $u\in C^\infty(\R^n\setminus\{0\})$ that
satisfies \eq{ch3_gr6}--\eq{ch3_gr9}.
In particular, $-\Delta u\ge 0$ in $\R^n\setminus\{0\}$.
Let
\[
\hat u=u\chi+w(1-\chi)\quad\text{and}\quad v=|x|^{-(n-2)}\chi+w(1-\chi)
\]
where $w\in C^\infty(\R^n)\cap L^1(\R^n)$ is a positive function and
$\chi\in C^\infty(\R^n)$ is a nonnegative function satisfying $\chi=1$ in
$B_2(0)$ and $\chi=0$ in $\R^n\setminus B_3(0)$. Then $\hat u, v\in
C^\infty(\R^n\setminus\{0\})\cap L^1(\R^n)$ by Lemma \ref{brezislions}. Also, $\hat
u=u$ and $v=|x|^{-(n-2)}$ in $B_2(0)\setminus\{0\}$. For simplicity of
notation, we again denote $\hat u$ by $u$.  Since $v$ is harmonic in
$B_2(0)\setminus\{0\}$ we only need to check that $u$ and $v$ satisfy
\neweq{remain} 0\leq -\Delta u(x)\leq\left(\;\int\limits_{\R^n}
  \frac{v(y)\,dy}{|x-y|^{\alpha}}\right)^\lambda
\quad\mbox{ for }0<|x|<2
\endeq
and that \eq{optimalesti1} holds. In fact, owing to \eq{ch3_gr7}, we only need to check that \eq{remain} is valid in $\bigcup_{j=1}^\infty B_{r_j}(x_j)$.

For $x\in B_{r_j}(x_j)$ we have $x\in B_1(0)$ and
\neweq{remain1}
\int\limits_{\R^n} \frac{v(y)\,dy}{|x-y|^{\alpha}}\geq \!\!\!\!\!
\!\!\int\limits_{|y-x|<\frac{|x|}{2}}
\frac{dy}{|x-y|^{\alpha}|y|^{n-2}} \geq
\frac{C}{|x|^{n-2}}\int\limits_{|y-x|<\frac{|x|}{2}}
\frac{dy}{|x-y|^{\alpha}}=\frac{C}{|x|^{\alpha-2}}
>\frac{C}{|x_j|^{\a-2}}.
\endeq
We now combine \eq{ch3_gr6}, \eq{radius} and \eq{remain1} to obtain
$$
-\Delta u(x)\leq \frac{\varphi(|x_j|)}{r_j^n}\leq \frac{1}{r_j^n}= \frac{1}{|x_j|^{\lambda(\alpha-2)}}\leq
C\left(\;\int\limits_{\R^n} \frac{v(y)\,dy}{|x-y|^{\alpha}}\right)^\lambda
$$
for all $x\in B_{r_j}(x_j)$. This establishes \eq{remain}. To check \eq{optimalesti1} we use \eq{ch3_gr9}, \eq{radius}  and obtain
$$
\begin{aligned}
\frac{u(x_j)}{h(|x_j|)|x_j|^{-\frac{\lambda(\alpha-2)(n-2)}{n}}} &\geq \frac{A\varphi(|x_j|)}{h(|x_j|)r_j^{n-2}|x_j|^{-\frac{\lambda(\alpha-2)(n-2)}{n}}}\\
&= \frac{A}{\sqrt{h(|x_j|)}}\to \infty\quad\mbox{ as }j\to\infty.
\end{aligned}
$$

\section{Proof of Theorems \ref{thm3}--\ref{thm4B},
  \ref{crucial1}, and \ref{crucial2}}

The theorems in the title of this section are either immediate
consequences of the following theorem or follow very easily from
it. Its proof is the crux of this paper. Specifically, estimates
\eqref{f-est} and \eqref{g-est} immediately give Theorems
\ref{crucial1} and \ref{crucial2}, and, as we will see at the end of this
section, Theorems \ref{thm3}--\ref{thm4B} follow easily from
estimates \eqref{e17} and \eqref{e18}.

\begin{theorem}\label{crucial}
Assume $\alpha,\beta\in(2,n+2)$, $\lambda\ge 0$, and
\neweq{lasi}
0\le\sigma<\min\left\{\frac{n}{\beta-2},
\frac{n+2-\alpha}{\beta-2}+\frac{n}{\beta-2}\frac{1}{\lambda}\right\}.
\endeq
Let $f$ and $g$ be $L^1(B_1(0))$ solutions
of \eqref{hls_syst} where $M$ is a positive constant. Then
\begin{equation}\label{f-est}
f(x)={\mathcal O}\left(|x|^{-\lambda(\alpha-2)}\right)\quad\mbox{ as }x\to 0
\end{equation}
and
\begin{equation}\label{g-est}
g(x)={\mathcal O}\left(|x|^{-\sigma(\beta-2)}\right)
+o\left(|x|^{-\frac{\lambda(\alpha-2)\sigma(\beta-2)}{n}}\right)
\quad\mbox{ as }x\to 0.
\end{equation}
Also, for $2<s<n+2$ we have
\neweq{e17}
 \intl_{|y|<1}\frac{f(y)\,dy}{|x-y|^{s-2}}={\mathcal O}\left(|x|^{-(s-2)}\right)+o\Big(|x|^{-\frac{\lambda(\alpha-2)(s-2)}{n}}\Big) \quad\mbox{ as }x\to 0
\endeq
and
\neweq{e18}
\intl_{|y|<1}\frac{g(y)\,dy}{|x-y|^{s-2}} ={\mathcal O}\left(|x|^{-(s-2)}\right)+o\Big(|x|^{-\frac{\lambda(\alpha-2)[\sigma(\beta-2)-(n+2-s)]}{n}}\Big)  \quad\mbox{ as }x\to 0.
\endeq
In particular,
\begin{equation}\label{e19.5}
\intl_{|y|<1}\frac{g(y)\,dy}{|x-y|^{\a-2}} ={\mathcal O}\left(|x|^{-(\a-2)}\right)
\quad\mbox{ as }x\to 0.
\end{equation}
\end{theorem}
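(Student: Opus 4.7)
The plan is to prove \eqref{f-est} first via an iterative bootstrap built around the Hedberg-type nonlinear Riesz estimate of Proposition \ref{cor_nonlin}, and then deduce \eqref{g-est}, \eqref{e17}, \eqref{e18}, and \eqref{e19.5} mechanically by plugging pointwise bounds into Lemma \ref{pastgeneral}.

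First I would substitute the $g$-inequality in \eqref{hls_syst} into the $f$-inequality and split the outer power using $(a+b)^p\le C(a^p+b^p)$, obtaining
\[
f(x)\le C\Bigl(|x|^{-(\alpha-2)}+\int_{|y|<1}\frac{|y|^{-\sigma(\beta-2)}}{|x-y|^{\alpha-2}}\,dy\Bigr)^\lambda+C\,\bigl(U_{n-\alpha+2,\,n-\beta+2,\,\sigma}(f\chi_{B_1})(x)\bigr)^\lambda.
\]
Lemma \ref{int1} shows the first term is $O(|x|^{-\lambda(\alpha-2)})$ precisely because $\sigma(\beta-2)<n$, which is the first half of \eqref{lasi}. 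For the Havin--Maz'ya potential I would invoke Proposition \ref{cor_nonlin} with $a=n-\alpha+2$ and $b=n-\beta+2$; its nontrivial hypothesis $\sigma>(n+2-\alpha)/(\beta-2)$ is the only regime where real work is required, and in the complementary regime $\sigma(\beta-2)\le n+2-\alpha$ a direct computation via Lemma \ref{int1} yields a purely $L^1$-type bound for $U$ with no $\|f\|_\infty$ factor.

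The bootstrap is the heart of the proof. For $x_0$ with $0<|x_0|<1/2$, set $\rho=|x_0|/2$ and split each Riesz integral in $U$ into its restriction to $B_\rho(x_0)$ and its complement, absorbing the complementary contributions into $O(|x_0|^{-(\alpha-2)})$ by means of $f,g\in L^1$ and $|x-y|\ge\rho$. Applying Proposition \ref{cor_nonlin} to the localized piece on $B_\rho(x_0)$ and raising the resulting bound to the $\lambda$ power, the self-referential inequality for $N(x_0):=\sup_{B_\rho(x_0)}f$ takes the schematic form
\[
N(x_0)\le C|x_0|^{-\lambda(\alpha-2)}+C\,\epsilon(x_0)\,N(x_0)^{\theta},\qquad \theta:=\lambda\cdot\frac{\sigma(\beta-2)-(n+2-\alpha)}{n},
\]
where $\epsilon(x_0)$ is a positive power of $\|f\|_{L^1(B_\rho(x_0))}$, and hence $\epsilon(x_0)\to 0$ as $x_0\to 0$ by absolute continuity of the integral. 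The second half of \eqref{lasi}, $\sigma(\beta-2)<(n+2-\alpha)+n/\lambda$, is \emph{exactly} the condition $\theta<1$, which via Young's inequality lets one solve the self-referential bound to conclude $N(x_0)=O(|x_0|^{-\lambda(\alpha-2)})$, i.e.\ \eqref{f-est}.

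The remaining estimates then follow quickly. Applying Lemma \ref{pastgeneral} with $v=f$, $\gamma=\lambda(\alpha-2)$, and kernel exponent $\beta-2$ in the $g$-inequality of \eqref{hls_syst} yields \eqref{g-est}. For \eqref{e17} and \eqref{e18} I would apply Lemma \ref{pastgeneral} with kernel exponent $s-2$ to $f$ and $g$ respectively, using the pointwise bounds \eqref{f-est} and \eqref{g-est}. Finally, \eqref{e19.5} is the specialization $s=\alpha$ of \eqref{e18} combined with the first inequality of \eqref{lasi}, which forces $\lambda(\alpha-2)[\sigma(\beta-2)-(n+2-\alpha)]/n\le\alpha-2$, so the $o$-term is dominated by the $O$-term. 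The hardest part will be setting up the bootstrap so that the two hypotheses in \eqref{lasi} align exactly with the two thresholds dictated by Lemma \ref{int1} (for the first, purely Riesz term) and by the condition $\theta<1$ in Proposition \ref{cor_nonlin} (for the nonlinear term); this algebraic match is what makes \eqref{lasi} the sharp condition and is the one calculation that cannot be avoided.
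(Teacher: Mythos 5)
Your overall strategy is close in spirit to the paper's, and several pieces are right: the decomposition using Proposition \ref{cor_nonlin} with $a=n+2-\alpha$, $b=n+2-\beta$, the identification of $\theta=\lambda[\sigma(\beta-2)-(n+2-\alpha)]/n$ and the observation that the second half of \eqref{lasi} is exactly $\theta<1$, the deduction of \eqref{g-est} from \eqref{e17} with $s=\beta$, and the observation that \eqref{e19.5} follows from \eqref{e18} with $s=\alpha$ together with the second half of \eqref{lasi}. But there are two genuine gaps.

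The central one is that your self-referential inequality
\[
N(x_0)\le C|x_0|^{-\lambda(\alpha-2)}+C\,\epsilon(x_0)\,N(x_0)^{\theta},\qquad N(x_0):=\sup_{B_\rho(x_0)}f,
\]
is vacuous unless one already knows $N(x_0)<\infty$, and this is far from automatic: $f$ and $g$ are merely $L^1(B_1(0))$, and the pointwise inequalities in \eqref{hls_syst} involve Riesz potentials of $L^1$ functions, which are finite a.e.\ but need not be locally bounded. With $\theta<1$ the bound $N\le A+\epsilon N^{\theta}$ is satisfied by $N=\infty$, so Young's inequality cannot close the argument. The paper invests an entire separate step (its Step 1, iterating an auxiliary $L^p\to L^q$ lemma built on Riesz potential estimates, with the gain $1/p-1/q\ge C_0>0$ per step) precisely to establish an a priori bound $f(x)=O(|x|^{-\gamma})$ for some finite $\gamma$; only then does it run the Havin--Maz'ya contraction (its Step 2, iterating $\gamma_{j+1}=\tfrac{\lambda\gamma_j}{n}[\sigma(\beta-2)-(n+2-\alpha)]$ finitely many times) to reach the sharp exponent $\lambda(\alpha-2)$. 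Your proposal compresses both of these into one Young's-inequality step and thereby skips the hard part.

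The second gap is in the claimed derivation of \eqref{e18}. You propose to apply Lemma \ref{pastgeneral} directly to $g$ using the pointwise rate from \eqref{g-est}. In the regime $\lambda(\alpha-2)\ge n$ the dominant rate for $g$ is $\gamma_g=\lambda(\alpha-2)\sigma(\beta-2)/n$, and Lemma \ref{pastgeneral} then yields a little-oh exponent $\gamma_g(s-2)/n=\lambda(\alpha-2)\sigma(\beta-2)(s-2)/n^2$. A direct comparison shows this exceeds the exponent $\lambda(\alpha-2)[\sigma(\beta-2)-(n+2-s)]/n$ claimed in \eqref{e18} precisely because $\sigma(\beta-2)<n$ (first half of \eqref{lasi}); so the estimate you obtain is strictly weaker than \eqref{e18} and in particular does not specialize, at $s=\alpha$, to \eqref{e19.5}. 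The paper instead proves \eqref{e18} by going back to the coupled system: it substitutes the $g$-inequality pointwise, estimates $\int\frac{g(y)}{|x-y|^{s-2}}\,dy$ via Proposition \ref{cor_nonlin} with $a=n+2-s$, $b=n+2-\beta$, and exploits the $L^\infty$ control $f=O(|x|^{-\lambda(\alpha-2)})$ already established in \eqref{f-est}, obtaining the sharper exponent. So \eqref{e18} cannot be obtained by treating $g$ as a black-box function with known decay; you have to use the structure of the system once more.
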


\begin{proof}
The estimate \eqref{e19.5} follows from \eqref{e18} and
\eqref{lasi}. Also, using Lemma \ref{pastgeneral}, we see that
\eqref{f-est} implies \eqref{e17}. Moreover, \eqref{e17} with
$s=\b$ combined with \eqref{hls_syst} implies \eqref{g-est}. Hence it
remains only to prove \eqref{f-est} and \eqref{e18}.

We first prove \eqref{f-est}.
If $\lambda=0$ then \eqref{f-est} follows immediately from
\eqref{hls_syst}. Hence we can assume for the proof of  \eqref{f-est}
that
\begin{equation}\label{la}
\lambda>0.
\end{equation}
Moreover, since the estimate \eqref{f-est} for $f$
does not depend on $\sigma$ and since increasing $\sigma$ weakens the
conditions on $f$ and $g$ in the system \eqref{hls_syst}, we can also
assume for the proof of \eqref{f-est} that
\begin{equation}\label{sigma-bound}
\sigma>\frac{n+2-\alpha}{\beta-2}.
\end{equation}
We divide the  proof of \eqref{f-est} into two steps.

\noindent{\bf Step 1:} For some $\gamma>n$ we have
\neweq{gammaexp}
f(x)={\mathcal O}(|x|^{-\gamma})\quad\mbox{ as }x\to 0.
\endeq

Let $\{x_j\}\subset\R^n$ be a sequence such that
\neweq{subseq}
0<4|x_{j+1}|<|x_j|<\frac{1}{2}\quad\mbox{ for }j=1,2,\dots.
\endeq
To prove \eq{gammaexp}, it suffices to prove
\neweq{gammaexp-seq}
f(x_j)={\mathcal O}(|x_j|^{-\gamma})\quad\mbox{ as }j\to \infty.
\endeq

Since
\[
\intl_{|y-x_j|>|x_j|/2,|y|<1}\frac{g(y)\,dy}{|x-y|^{\alpha-2}}
\le \left(\frac{4}{|x_j|}\right)^{\alpha-2}\intl_{|y|<1}g(y)\,dy
\le C|x_j|^{2-\alpha} \quad\mbox{ for }|x-x_j|<r_j:=\frac{|x_j|}{4},
\]
it follows from \eqref{hls_syst} that

\neweq{e3}
f(x)\leq C\left[ |x_j|^{2-\alpha}
+\intl_{|y-x_j|<\frac{|x_j|}{2}}\frac{g(y)\,dy}{|x-y|^{\alpha-2} } \right]^\lambda\quad\mbox{ for }|x-x_j|<r_j;
\endeq
and similarly
\neweq{e4}
g(x)\leq C\left[ |x_j|^{2-\beta}
+\intl_{|y-x_j|<\frac{|x_j|}{2}}\frac{f(y)\,dy}{|x-y|^{\beta-2} } \right]^\sigma\quad\mbox{ for }|x-x_j|<r_j.
\endeq
Let now $f_j,g_j:B_2(0)\to [0,\infty)$ be defined by
$$
f_j(\xi)=r_j^{n}f(x_j+r_j\xi)\,,\quad  g_j(\xi)=r_j^{n}g(x_j+r_j\xi).
$$
Since $f,g\in L^1(B_1(0))$ we have
\neweq{e5}
\|f_j\|_{L^1(B_2(0))}\to 0\,,\quad \|g_j\|_{L^1(B_2(0))}\to 0\quad\mbox{ as } j\to \infty.
\endeq
Further, with the change of variable $y=x_j+r_j\zeta$ in \eq{e3} and \eq{e4} we find
\neweq{e6}
r_j^{-n}f_j(\xi)=f(x_j+r_j\xi)\leq C|x_j|^{-\lambda(\alpha-2)}
\left[ 1+
\intl_{|\zeta|<2}\frac{g_j(\zeta)\,d\zeta}{|\xi-\zeta|^{\alpha-2} } \right]^\lambda\quad\mbox{ for }|\xi|<1,
\endeq
and
\neweq{e7}
r_j^{-n}g_j(\zeta)=g(x_j+r_j\zeta)\leq C|x_j|^{-\sigma(\beta-2)}
\left[ 1+
\intl_{|\eta|<2}\frac{f_j(\eta)\,d\eta}{|\zeta-\eta|^{\beta-2} } \right]^\sigma\quad\mbox{ for }|\zeta|<1.
\endeq

For any $a\in (0,n)$, $r>0$ and any $f\in L^1(B_r(0))$, $f\geq 0$ we denote by ${\bf I}_{a,r}f$ the Riesz potential
$$
{\bf I}_{a,r}f(x)=\intl_{B_r(0)}\frac{f(y)\,dy}{|x-y|^{n-a}}
$$
and we define
$$
{\bf U}_{a,b,\sigma; r} f:={\mathbf I_{a,r}}\left(({\mathbf I}_{b,r} f)^\sigma\right).
$$
Let $R\in (0,1/2]$. By \eqref{e5} we have
$$
\intl_{|\zeta|<2}\frac{g_j(\zeta)\,d\zeta}{|\xi-\zeta|^{\alpha-2} } \leq C\left[\frac{1}{R^{\alpha-2}}+\intl_{|\zeta|<2R}\frac{g_j(\zeta)\,d\zeta}{|\xi-\zeta|^{\alpha-2} } \right]\quad\mbox{ for }|\xi|<R.
$$
In other words,
\neweq{e8}
\intl_{|\zeta|<2}\frac{g_j(\zeta)\,d\zeta}{|\xi-\zeta|^{\alpha-2} } \leq C\left[\frac{1}{R^{\alpha-2}}+{\bf I}_{n+2-\alpha,2R}(g_j)(\xi)\right] \quad\mbox{ for }|\xi|<R.
\endeq
Similarly, we find
\neweq{e9}
\intl_{|\eta|<2}\frac{f_j(\eta)\,d\eta}{|\zeta-\eta|^{\beta-2} } \leq C\left[\frac{1}{R^{\beta-2}}+{\bf I}_{n+2-\beta,4R}(f_j)(\zeta)\right] \quad\mbox{ for }|\zeta|<2R.
\endeq
Combining \eq{e6}, \eq{e7}, \eq{e8} and \eq{e9} we deduce
\neweq{e10}
f_j(\xi)\leq Cr_j^{n-\lambda(\alpha-2)}\left\{  \frac{1}{R^{\lambda(\alpha-2)}}+\Big[{\bf I}_{n+2-\alpha,2R}(g_j)\Big]^\lambda(\xi)\right\} \quad\mbox{ for }|\xi|<R,
\endeq
\neweq{e11}
g_j(\zeta)\leq Cr_j^{n-\sigma(\beta-2)}\left\{  \frac{1}{R^{\sigma(\beta-2)}}+\Big[{\bf I}_{n+2-\beta,4R}(f_j)\Big]^\sigma(\zeta)\right\} \quad\mbox{ for }|\zeta|<2R.
\endeq
Now, from \eq{e11} we find for all $\xi\in \R^n$ that
$$
\begin{aligned}
{\bf I}_{n+2-\alpha,2R}(g_j)(\xi)&\leq Cr_j^{n-\sigma(\beta-2)}\left\{ R^{n-\alpha+2-\sigma(\beta-2)}+{\bf I}_{n+2-\alpha,4R}\Big[{\bf I}_{n+2-\beta,4R}(f_j)\Big]^\sigma(\xi)  \right\}\\
&=Cr_j^{n-\sigma(\beta-2)}\left\{ R^{n-\alpha+2-\sigma(\beta-2)}+{\bf U}_{n+2-\alpha,n+2-\beta,\sigma;4R}(f_j)(\xi)  \right\}.\\
\end{aligned}
$$
It therefore follows from \eqref{e10} that there exists a positive
constant $a$ which depends only on $n$, $\alpha$, $\beta$,
$\lambda$, and $\sigma$ such that
\neweq{e13}
f_j(\xi)\leq \frac{C}{(Rr_j)^{a}} \left\{ 1+\Big[{\bf V}(f_j)(\xi)  \Big]^\lambda\right\} \quad\mbox{ for }|\xi|<R\leq \frac{1}{2},
\endeq
where
$$
{\mathbf V}(f):={\bf U}_{n+2-\alpha,n+2-\beta,\sigma; 4R}(f).
$$
At this stage, to prove for some $\gamma>n$ that \eq{gammaexp-seq}
holds, it suffices to show that for some $\gamma>0$ the sequence
$\{r^\gamma_jf_j(0)\}$ is bounded.  This will be achieved by means
of the following auxiliary result.

\begin{lemma}\label{aux}
Suppose the sequence
\begin{equation}\label{s1}
\{r^\gamma_jf_j\} \quad\text{is bounded in } L^p(B_{4R}(0))
\end{equation}
for some constants $\gamma\ge 0$, $p\in [1,\infty)$, and $R\in (0,1/2]$.
Let $\delta=\gamma\lambda\sigma+a$ where $a$ is as in
\eqref{e13}. Then either the sequence
\begin{equation}\label{s2}
\{r^\delta_jf_j\} \quad\text{is bounded in } L^\infty(B_{R}(0))
\end{equation}
or there exists a positive constant
$C_0=C_0(n,\lambda,\sigma,\alpha,\beta)$ such that the sequence
\[
\{r^\delta_jf_j\} \quad\text{is bounded in } L^q(B_{R}(0))
\]
for some $q\in(p,\infty)$ satisfying
\begin{equation}\label{s3}
\frac{1}{p}-\frac{1}{q}>C_0.
\end{equation}
\end{lemma}

\begin{proof}[Proof of Lemma \ref{aux}]
It follows from \eqref{lasi} that there exists
$
\varepsilon=\varepsilon(n,\lambda,\sigma,\alpha,\beta)>0
$
such that
\begin{equation}\label{conditionepsilon}
\a,\b<n+2-\ep
\quad\text{and}\quad
0\le\sigma<\min\left\{\frac{n}{\beta-2+\varepsilon},
\frac{n+\lambda(n+2-\alpha-\varepsilon)}{\lambda(\beta-2+\varepsilon)}\right\}.
\end{equation}
By \eqref{e13} we have
\begin{equation}\label{eq55.4}
r_j^\delta f_j(\xi)\le\frac{C}{R^a}\left(1+(({\bf V}(r_j^\gamma
  f_j))(\xi))^\lambda\right) \quad\text{for } |\xi|<R.
\end{equation}
We can assume
\begin{equation}\label{eq55.5}
p\le n/(n+2-\beta)
\end{equation}
for otherwise from Riesz potential estimates (see
\cite[Lemma 7.12]{GT1983}) and
\eqref{s1} we find that the sequence $\{I_{n+2-\beta,4R}(r_j^\gamma f_j)\}$ is bounded in
$L^\infty (B_{4R}(0))$ and hence by \eqref{eq55.4} we see that
\eqref{s2} holds.

Define $p_1$ by
\begin{equation}\label{eq4.46}
  \frac{1}{p}-\frac{1}{p_1}=\frac{n+2-\beta-\varepsilon}{n},
 \end{equation}
where $\varepsilon$ is as in
\eqref{conditionepsilon}. By \eqref{eq55.5}, $p_1\in (p,\infty)$
and by Riesz potential estimates we have
\begin{equation}\label{eq4.47}
  \Vert({\bf I}_{n+2-\beta,4R}f_j)^\sigma \Vert_{p_1/\sigma}
=\Vert {\bf I}_{n+2-\beta,4R}f_j \Vert^{\sigma}_{p_1} \leq C\Vert f_j\Vert^{\sigma}_{p}
 \end{equation}
where $\Vert \cdot \Vert_p:=\Vert \cdot \Vert_{L^p (B_{4R}(0))}$.
Since, by \eqref{conditionepsilon},
\[
\frac{1}{p_1}=\frac{1}{p}-\frac{n+2-\beta-\varepsilon}{n}
\leq 1-\frac{n+2-\beta-\varepsilon}{n}
=\frac{\beta-2+\varepsilon}{n}
<\frac{1}{\sigma},
\]
we have
\begin{equation}\label{eq55.5.5}
p_1/\sigma>1.
\end{equation}

We can assume
\begin{equation}\label{eq55.6}
p_1/\sigma\le n/(n+2-\alpha)
\end{equation}
for otherwise by Riesz potential estimates and \eqref{eq4.47} we have
 $$
 \begin{aligned}
 \| {\mathbf V}(r_j^\gamma f_j)\|_\infty&=\|{\bf U}_{n+2-\alpha,n+2-\beta,\sigma;4R}
(r_j^\gamma f_j)\|_\infty\\
&\leq C\| ({\bf I}_{n+2-\beta,4R}(r_j^\gamma f_j))^\sigma\|_{\frac{p_1}{\sigma}}\\
&\leq C\| r_j^{\gamma} f_j\|^\sigma_{p}
 \end{aligned}
 $$
which is a bounded sequence by \eqref{s1}. Hence \eqref{eq55.4} implies
\eqref{s2}.

Define $p_2$ by
 \begin{equation}\label{e16}
 \frac{\sigma}{p_1}-\frac{1}{p_2}=\frac{n+2-\alpha-\varepsilon}{n}\quad\mbox{ and let }\quad q=\frac{p_2}{\lambda}.
\end{equation}
By \eqref{eq55.5.5} and \eqref{eq55.6}, $p_2\in(1,\infty)$ and by
Riesz potential estimates
$$
 \begin{aligned}
 \| {\mathbf V}(f_j)^\lambda \|_q
&=\|{\bf U}_{n+2-\alpha,n+2-\beta,\sigma;4R}(f_j)\|_{p_2}^\lambda\\
&\leq C\| ({\bf I}_{n+2-\beta,4R}f_j)^\sigma\|^\lambda_{\frac{p_1}{\sigma}}\\
&\leq C\| f_j\|^{\lambda\sigma}_{p} ,
 \end{aligned}
 $$
by \eqref{eq4.47}.
It follows therefore from \eqref{eq55.4} that
$$
\|r^{\delta}_jf_j\|_{L^q(B_R(0))}
\leq \frac{C}{R^a}\Big[1+\|r^\gamma_j f_j\|_{L^p(B_{4R}(0))}^{\lambda\sigma}\Big],
$$
which is a bounded sequence by \eqref{s1}.

It remains to prove that $q$ satisfies \eqref{s3} for some positive constant
$C_0=C_0(n,\lambda,\sigma,\alpha,\beta)$.
By \eq{eq4.46} and \eq{e16} we have
$$
\begin{aligned}
\frac{1}{p}-\frac{1}{q}&=\frac{1}{p}-\frac{\lambda}{p_2}=\frac{1}{p}-\lambda\Big[
\frac{\sigma}{p_1}-\frac{n+2-\alpha-\varepsilon}{n} \Big]\\
&= \frac{1}{p}+\frac{\lambda(n+2-\alpha-\varepsilon)}{n}-\frac{\lambda\sigma}{p_1}\\
&= \frac{1}{p}+\frac{\lambda(n+2-\alpha-\varepsilon)}{n}-\lambda\sigma\Big[\frac{1}{p}-\frac{n+2-\beta-\varepsilon}{n}\Big]\\
&=\frac{1-\lambda\sigma}{p}+\frac{\lambda(n+2-\alpha-\varepsilon)
+\lambda\sigma(n+2-\beta-\varepsilon)}{n}.
\end{aligned}
$$
If $\lambda\sigma\leq 1$ then
$$
\frac{1}{p}-\frac{1}{q}\geq \frac{\lambda(n+2-\alpha-\varepsilon)
+\lambda\sigma(n+2-\beta-\varepsilon)}{n}= C_1(n,\lambda,\sigma,\alpha,\beta)>0.
$$
by \eqref{conditionepsilon} and \eqref{la}.
If $\lambda\sigma>1$ then
$$
\begin{aligned}
\frac{1}{p}-\frac{1}{q}
&\geq 1-\lambda\sigma+\frac{\lambda(n+2-\alpha-\varepsilon)+\lambda\sigma(n+2-\beta-\varepsilon)}{n}\\
&=\frac{\lambda(\beta-2+\varepsilon)}{n}
\,\left[ \frac{n+\lambda(n+2-\alpha-\varepsilon)}{\lambda(\beta-2+\varepsilon)}-\sigma\right]\\
&= C_2(n,\lambda,\sigma,\alpha,\beta)>0
\end{aligned}
$$
by \eqref{conditionepsilon} and \eqref{la}. Thus \eqref{s3} holds with
$C_0=\min\{C_1,C_2\}$. This completes the proof of Lemma \ref{aux}.
 \end{proof}

 We are now ready to complete the proof of \eq{gammaexp}.  By
 \eqref{e5}, the sequence $\{f_j\}$ is bounded in $L^1(B_2(0))$.
 Starting with this fact and iterating Lemma \ref{aux} a finite number
 of times ($m$ times is enough if $m>1/C_0$) we see that there exists
 $R_0 \in(0,\frac{1}{2})$ and $\gamma>n$ such that sequence
 $\{r_j^\gamma f_j\}$ is bounded in $L^\infty(B_{R_0}(0))$. In
 particular $\{r_j^\gamma f_j(0)\}$ is a bounded sequence, whence
 \eq{gammaexp-seq} and \eqref{gammaexp}.

\medskip

 \noindent{\bf Step 2:} Proof of \eqref{f-est}.

Let $\{x_j\}\subset\R^n$ be a sequence satisfying
\eqref{subseq}. Then, as is Step 1, $f$ and $g$ satisfy \eqref{e3} and
\eqref{e4} where $r_j=|x_j|/4$.

By \eqref{gammaexp}, for some $\gamma>n$, we have
\begin{equation}\label{e20}
f(x)\le C|x_j|^{-\gamma}\quad\text{for } |x-x_j|<2r_j.
\end{equation}
Let
\[
\hat {\bf I}_{a,j}f(x)=\intl_{|y-x_j|<2r_j}\frac{f(y)\,
  dy}{|x-y|^{n-a}}\quad\text{and}\quad 
\hat {\bf U}_{a,b,\sigma,j}f=\hat {\bf I}_{a,j}((\hat {\bf I}_{b,j}f)^\sigma).
\]
Since, by \eqref{e4},
\begin{equation}\label{e19}
g(x)\leq C\left[|x_j|^{-\sigma(\beta-2)}+\left({\bf
      \hat I}_{n+2-\beta,j}f\right)^\sigma(x)  \right] \quad\mbox{ for }|x-x_j|<r_j,
\end{equation}
we find that
\begin{align}
\intl_{|y-x_j|<r_j}\frac{g(y)\,dy}{|x_j-y|^{\alpha-2}}
&\leq C\left[\intl_{|y-x_j|<r_j}\frac{|x_j|^{-\sigma(\beta-2)}}{|x_j-y|^{\alpha-2} } dy
+\intl_{|y-x_j|<2r_j}\frac{(\hat {\bf I}_{n+2-\beta,j}f(y))^\sigma}
{|x_j-y|^{\alpha-2} } dy\right]\notag\\
&\leq C\left[ |x_j|^{n+2-\alpha-\sigma(\beta-2)}
+\|\hat {\bf U}_{n+2-\alpha, n+2-\beta,\sigma,j}f\|_{L^\infty(B_{2r_j}(x_j))}
\right]\notag\\
&={\cal O}(|x_j|^{-(\alpha-2)})
+o\left(|x_j|^{-\frac{\gamma}{n}[\sigma(\beta-2)-(n+2-\alpha)]}\right)
\quad\text{as }j\to\infty\label{ee19}
\end{align}
where the big ``oh'' term follows from \eqref{crucial} and the little
``oh'' term follows from \eqref{e20}, $f\in L^1(B_1(0))$,
\eqref{sigma-bound}, and Proposition \ref{cor_nonlin}.

Since $g\in L^1(B_1(0))$ we have
\begin{align*}
\intl_{|y-x_j|<2r_j}\frac{g(y)\,dy}{|x_j-y|^{\alpha-2}}
&\leq \intl_{|y-x_j|<r_j}\frac{g(y)\,dy}{|x_j-y|^{\alpha-2}}
+\intl_{r_j<|y-x_j|<2r_j}\frac{g(y)}{r_j^{\alpha-2}}dy\\
&\leq \intl_{|y-x_j|<r_j}\frac{g(y)\,dy}{|x_j-y|^{\alpha-2}}
+o(|x_j|^{2-\alpha})\quad\text{as }j\to\infty.
\end{align*}
We therefore deduce from \eqref{e3} and \eqref{ee19} that
\[
f(x_j)={\cal O}(|x_j|^{-\lambda(\alpha-2)})
+o\left(|x_j|^{-\frac{\lambda\gamma}{n}[\sigma(\beta-2)-(n+2-\alpha)]}\right)
\quad\text{as }j\to\infty.
\]
Thus, since $\{x_j\}$ was an arbitrary sequence satisfying
\eqref{subseq}, we have
\begin{equation}\label{ee}
f(x)={\cal O}(|x|^{-\lambda(\alpha-2)})
+o\left(|x|^{-\frac{\lambda\gamma}{n}[\sigma(\beta-2)-(n+2-\alpha)]}\right)
\quad\text{as }x\to 0.
\end{equation}

Let $\{\gamma_j\}$ be a sequence of real numbers defined by
$\gamma_0=\gamma$ and
$$
\gamma_{j+1}=\frac{\lambda\gamma_j}{n}[\sigma(\beta-2)-(n+2-\alpha)]
\quad\mbox{ for } \, j=0,1,\dots.
$$
Since $\sigma$ satisfies \eq{lasi} and \eqref{sigma-bound} we have
$\{\gamma_j\}\subset (0,\infty)$ and $\gamma_j\to 0$ as $j\to
\infty$. Thus, iterating finitely many times the procedure of going
from \eqref{gammaexp} to \eqref{ee} we obtain \eqref{f-est}.  \medskip

We now prove \eqref{e18}. Since increasing $\sigma$ weakens the
conditions on $f$ and $g$ in the system \eqref{hls_syst} and since
increasing $\sigma$ to a value slightly larger than $(n+2-s)/(\beta-2)$
does not change the estimate  \eqref{e18}, we can assume for the proof
of  \eqref{e18} that
\begin{equation}\label{sigma-bound2}
\sigma>\frac{n+2-s}{\beta-2}.
\end{equation}
Let $\{x_j\}\subset\R^n$ be a sequence satisfying \eqref{subseq}. Then,
as before, $g$ satisfies \eqref{e19} where $r_j=|x_j|/4$.
Repeating the calculation \eqref{ee19}, except this time with
$\alpha=s$ and $\gamma=\lambda(\alpha-2)$ and using \eqref{sigma-bound2}
instead of \eqref{sigma-bound}, we get
\[
\intl_{|y-x_j|<r_j}\frac{g(y)\,dy}{|x_j-y|^{s-2} }
={\mathcal O}(|x_j|^{2-s})
+o\Big(|x_j|^{-\frac{\lambda(\alpha-2)[\sigma(\beta-2)-(n+2-s)]}{n}}\Big) \quad\mbox{ as }j\to \infty.
\]
Thus
\begin{align*}
\intl_{|y|<1}\frac{g(y)\,dy}{|x_j-y|^{s-2} }
&=\intl_{|y-x_j|<r_j}\frac{g(y)\,dy}{|x_j-y|^{s-2} }
+\intl_{|y-x_j|>r_j,|y|<1}\frac{g(y)\,dy}{|x_j-y|^{s-2} } \\
&\le C|x_j|^{2-s}+\intl_{|y-x_j|<r_j}\frac{g(y)\,dy}{|x_j-y|^{s-2} }\\
&={\mathcal O}(|x_j|^{2-s})+o\Big(|x_j|^{-\frac{\lambda(\alpha-2)[\sigma(\beta-2)-(n+2-s)]}{n}}\Big)  \quad\mbox{ as }j\to \infty
\end{align*}
which proves \eqref{e18}. This finishes the proof of Theorem \ref{crucial}.
\end{proof}

We are now able to easily prove Theorems \ref{thm3}--\ref{thm4B}.

\begin{proof}[Proof of Theorems \ref{thm3}--\ref{thm4B}]
  By Lemma \ref{uv}, $u$ and $v$ satisfy
  \eqref{lap-uv-L1}--\eqref{combine}. Let $f=-\Delta u$ and $g=-\Delta
  v$. By \eqref{lap-uv-L1}, \eqref{combine}, and Corollary
  \ref{corstein}, $f$ and $g$ are $L^1(B_1(0))$ solutions of
  \eqref{hls_syst} for some positive constant $M$. Hence, by Theorem
  \ref{crucial}, $f$ and $g$ satisfy \eqref{e17} and \eqref{e18} with
  $s=n$. It therefore follows from \eqref{B-L} that
\begin{align*}
u(x)&={\cal O}\left(|x|^{-(n-2)}\right)
+o\left(|x|^{-\frac{\lambda(\alpha-2)(n-2)}{n}}\right)
\quad\text{as }x\to 0\\
v(x)&={\cal O}\left(|x|^{-(n-2)}\right)
+o\left(|x|^{-\frac{\lambda(\alpha-2)[\sigma(\beta-2)-2]}{n}}\right)
\quad\text{as }x\to 0
\end{align*}
which immediately gives Theorems \ref{thm3}--\ref{thm4B}.
\end{proof}

\section{Proof of Theorem \ref{optimalThm2}}

Define continuous functions $\varphi,\psi : (0,1)\to(0,1)$ by
\begin{equation}\label{B}
\varphi=\max\{h^{1/2},h^{1/(2\sigma)}\} \quad \text{and}\quad
\psi(t)=B\varphi(t)^\sigma t^{\frac{\lambda(\alpha-2)}{n}(n-\sigma(\beta-2))}
\end{equation}
where $B=B(n,\beta,\sigma)$ is a positive constant to be specified
later.

Let $\{x_j\}\subset\R^n$ be a sequence satisfying
\[
0<4|x_{j+1}|<|x_j|<1/2,\quad\sum_{j=1}^\infty\varphi(|x_j|)<\infty,\quad\text{and}
\quad \sum_{j=1}^\infty\psi(|x_j|)<\infty,
\]
and let $r_j=|x_j|^{\lambda(\alpha-2)/n}$. Then by Lemma \ref{optimal}
there exist a positive constant $A=A(n)$ and positive functions
$u,\hat v\in C^\infty(\R^n\setminus\{0\})$  such that
\eqref{ch3_gr6}--\eqref{ch3_gr9} hold as stated and also with $u$ and
$\varphi$ replaced with $\hat v$ and $\psi$ respectively. Let
\[
v=\hat v +|x|^{-(n-2)}.
\]
As in the proof of Theorem \ref{optimalthm1}, we can modify $u$ and
$v$ on $\R^n\setminus B_2(0)$ in such a way that they become
$C^\infty(\R^n\setminus\{0\})\cap L^1(\R^n)$ functions, and, by
\eqref{ch3_gr7}, $u$ and $v$ will satisfy \eqref{nonloc} in
$B_2(0)\setminus\{0\}$ provided they satisfy \eqref{nonloc} in
$\cup_{j=1}^\infty B_{r_j}(x_j)$.

Since, as the proof of Lemma \ref{int1} shows,
\[
\intl_{|y|<2}\frac{1}{|x-y|^\alpha}\frac{1}{|y|^{n-2}}\,dy \ge
\frac{C}{|x|^{\alpha-2}}\quad\text{for }0<|x|<2,
\]
we have for $|x-x_j|<r_j$ that
\begin{align*}
\left(\frac{1}{|x|^\alpha}*v\right)^\lambda
&\ge\left(\intl_{|y|<2}\frac{1}{|x-y|^\alpha}\frac{1}{|y|^{n-2}}\,dy\right)^\lambda\\
&\ge \frac{C}{|x_j|^{\lambda(\alpha-2)}}>\frac{\varphi(|x_j|)}{r_j^n}\ge-\Delta u.
\end{align*}
Moreover,
\begin{align*}
\frac{u(x_j)}{h(|x_j|)|x_j|^{-\lambda(\alpha-2)(n-2)/n}}
&\ge
\frac{A\varphi(|x_j|)r_j^{-(n-2)}}{\varphi(|x_j|)^2r_j^{-(n-2)}}\\
&=\frac{A}{\varphi(|x_j|)}\to\infty\quad\text{as }j\to\infty.
\end{align*}
Also, for $|x-x_j|<r_j$,
\begin{align*}
\left(\frac{1}{|x|^\beta}*u\right)^\sigma
&\ge\left(\intl_{|y-x_j|<r_j}\frac{u(y)}{|x-y|^\beta}\,dy\right)^\sigma\\
&\ge \left(\frac{A\varphi(|x_j|)}{r_j^{n-2}}
\frac{|B_{r_j}(x_j)|}{(2r_j)^\beta}\right)^\sigma
=B\varphi(|x_j|)^\sigma r_j^{-(\beta-2)\sigma}
\end{align*}
where $B=(2^{-\beta}|B_1(0)|A)^\sigma$. Hence \eqref{B} implies
\begin{align*}
\left(\frac{1}{|x|^\beta}*u\right)^\sigma
&\ge \psi(|x_j|)r_j^{-(n-\sigma(\beta-2))}r_j^{-(\beta-2)\sigma}\\
&=\psi(|x_j|)r_j^{-n}\ge -\Delta \hat v=-\Delta v\quad\text{for }|x-x_j|<r_j.
\end{align*}
Finally, again by \eqref{B},
\begin{align*}
\frac{v(x_j)}{h(|x_j|)|x_j|^{-\frac{\lambda(\alpha-2)[\sigma(\beta-2)-2]}{n}}}
&\ge \frac{A\psi(|x_j|)r_j^{-(n-2)}}{\varphi(|x_j|)^{2\sigma}r_j^{-(\sigma(\beta-2)-2)}}\\
&=\frac{AB}{\varphi(|x_j|)^\sigma}\to\infty\quad\text{as }j\to\infty
\end{align*}
and
\[
\frac{v(x_j)}{h(|x_j|)|x_j|^{-(n-2)}}\ge \frac{1}{h(|x_j|)}\to\infty
\quad\text{as }j\to\infty.
\]
This completes the proof of Theorem \ref{optimalThm2}.

\section{Proof of Theorem \ref{thm5}}
Choose $M>1$ such that $\int_{|y|<2}M|x-y|^{-\b}\,dy>1$ for
$|x|<2$. The positive functions $u$ and $v$ that we construct will
satisfy not just \eqref{nonloc}, \eqref{large1}, and \eqref{large2}
but also
\begin{equation}\label{M}
u\ge M \quad\text{in } B_2(0)\setminus\{0\}.
\end{equation}
If $u$ and $v$ are positive functions satisfying \eqref{nonloc} and
\eqref{M} then $u$ and $v$ also satisfy \eqref{nonloc} for any larger
value of $\sigma$ because then
\[
\int_{\R^n}\frac{u(y)\,dy}{|x-y|^\b}\ge\int_{|y|<2}\frac{M\,dy}{|x-y|^\b}
>1\quad\text{for }0<|x|<2.
\]
Hence we can assume for the proof of Theorem \ref{thm5} that
\neweq{aa} \sigma<\frac{n}{\beta-2}.
\endeq

Define
$$
a=\frac{1}{\lambda(\alpha-2)-n}\quad \mbox{ and }\quad b=\frac{1}{n-\sigma(\beta-2)}.
$$
Using \eq{optimm1} and \eq{aa} we have
\neweq{aa1}
a,b>0\quad\mbox{ and }\quad a\lambda-b<0.
\endeq
Let $\varphi:(0,1)\to (0,1)$, $\{x_j\}\subset \R^n$, $\{r_j\}\subset
(0,1)$ and $A=A(n)$ be as in Lemma \ref{optimal}. By Lemma
\ref{optimal} there exists a positive function $u\in
C^\infty(\R^n\setminus\{0\})$ that satisfies
\eq{ch3_gr6}--\eq{ch3_gr9}.

Since adding a positive constant to $u$ will not change the fact that
$u$ satisfies \eq{ch3_gr6}--\eq{ch3_gr9}, we can assume, instead of
\eqref{ch3_gr8}, that $u>M$ in $B_2(0)\setminus\{0\}$ where $M$ is as
stated above. As in the proof of Theorem \ref{optimalthm1},
we modify $u$ on $\R^n\setminus B_2(0)$ in
such a way as to obtain a $C^\infty(\R^n\setminus\{0\})\cap L^1(\R^n)$
function. For every $j\geq 1$ we define $\psi_j$ as a function of $r_j$
by \neweq{phij} r_j=\left[\frac{(B\psi_j)^\lambda}{\varphi(|x_j|)}
\right]^a\;,\quad\mbox{where}\quad B=B(n)=\frac{A|B_1(0)|}{2^n}>0.
\endeq
Note that
$$
\frac{A\psi_j}{r_j^{n-2}}=\frac{A}{B^{a\lambda(n-2)}}
\frac{\varphi(|x_j|)^{a(n-2)}}{\psi_j^{\lambda a(n-2)-1}}.
$$
By decreasing $r_j$ (and thereby decreasing $\psi_j$) we may assume
$$
\sum_{j=1}^\infty\psi_j<\infty,
$$
\neweq{c1}
\psi_j^{a\lambda-b}
\geq \frac{\varphi(|x_j|)^{a-b\sigma}}{B^{a\lambda+b\sigma}},
\endeq
and
\neweq{c2}
\frac{A\varphi(|x_j|)}{r_j^{n-2}}>>h(|x_j|)\;,\quad \frac{A\psi_j}{r_j^{n-2}}>>h(|x_j|)\quad\mbox{ as }j\to\infty.
\endeq
It follows from \eqref{c1} and \eqref{phij} that
\begin{equation}\label{help}
\left(\frac{B\varphi(|x_j|)}{r_j^{\b-2}}\right)^\sigma\ge
\frac{\psi_j}{r_j^n}.
\end{equation}
Let $\psi:(0,1)\to (0,1)$ be a continuous function such that $\psi(|x_j|)=\psi_j$. By Lemma \ref{optimal} there exists a positive function $v\in C^\infty(\R^n\setminus\{0\})$ such that
\neweq{ch3_gr10}
0\leq -\Delta v\leq \frac{\psi(|x_j|)}{r_j^n}\quad\mbox{ in }B_{r_j}(x_j),
\endeq
\neweq{ch3_gr11}
 -\Delta v=0\quad\mbox{ in }\R^n\setminus\Big(\{0\}\cup \bigcup_{j=1}^\infty B_{r_j}(x_j)\Big),
\endeq
\neweq{ch3_gr12}
v\geq 1\quad\mbox{ in }\R^n\setminus\{0\},
\endeq
\neweq{ch3_gr13}
v\geq \frac{A\psi(|x_j|)}{r_j^{n-2}} \quad\mbox{ in } B_{r_j}(x_j).
\endeq
We modify $v$ on $\R^n\setminus B_2(0)$ in such a way as to obtain a
$C^\infty(\R^n\setminus\{0\})\cap L^1(\R^n)$ function.
In order to check that
$u$ and $v$ satisfy
 \eq{nonloc} let us remark first that by \eq{ch3_gr13} we have
\neweq{check1}
\begin{aligned}
\int\limits_{\R^n} \frac{v(y)\,dy}{|x-y|^{\alpha}} &\geq \frac{A\psi(|x_j|)}{r_j^{n-2}}  \int\limits_{B_{r_j}(x_j)}   \frac{dy}{|x-y|^{\alpha}}\\
& \geq
\frac{A\psi(|x_j|)}{r_j^{n-2}}\frac{|B_{r_j}(x_j)|}{(2r_j)^\alpha}\geq
\frac{B\psi(|x_j|)}{r_j^{\alpha-2}}\quad\mbox{ for }x\in
B_{r_j}(x_j)
\end{aligned}
\endeq
 and similarly
\neweq{check2}
\int\limits_{\R^n} \frac{u(y)\,dy}{|x-y|^{\beta}}\geq
\frac{B\varphi(|x_j|)}{r_j^{\beta-2}}\quad\mbox{ for }x\in
B_{r_j}(x_j).
\endeq
Now, by \eq{ch3_gr6}, \eq{phij}, \eqref{help}, \eq{ch3_gr10},
\eq{check1} and \eq{check2} we deduce that $u$ and $v$ are solutions
of \eq{nonloc}. Finally, to check that $u$ and $v$ satisfy \eq{large1}
and \eq{large2} along the sequence $\{x_j\}$ we use \eq{ch3_gr9},
\eq{ch3_gr13} and \eq{c2}.

\section{Proof of Theorem \ref{optimal-fg}}
We consider two cases.
\medskip

\noindent{\bf Case I}.
Suppose $\lambda(\alpha-2)<n$. Let $\chi:\R^n\to [0,1]$ be a
$C^\infty$ function such that $\chi=1$ for $|x|<2\varepsilon$ and
$\chi=0$ for $|x|>4\varepsilon$. Then
$f(x):=\varepsilon|x|^{-\lambda(\alpha-2)}\chi(x)$  and
$g(x):=\varepsilon|x|^{-\sigma(\beta-2)}\chi(x)$
clearly satisfy \eqref{op1}--\eqref{op4}.
\medskip

\noindent{\bf Case II}.
Suppose $\lambda(\alpha-2)\ge n$. Define $\varphi:(0,1)\to (0,1)$ by
$
\varphi=h^{n/(2\sigma(n+2-\beta))}.
$
Let $\{x_j\}$ be a sequence in $\R^n$ such that
\[
0<4|x_{j+1}|<|x_j|<\varepsilon/2.
\]
Let
\begin{equation}\label{op5}
\ep_j=\vphi(|x_j|)\quad\text{and }\quad
r_j=\left(\frac{\ep_j}{\ep}\right)^{1/n}(2|x_j|)^{\la(\a-2)/n}.
\end{equation}
By taking a subsequence we can assume
\begin{equation}\label{op6}
0<r_j<|x_j|/2<1\quad\text{and}\quad \ep_j<2^{-j}.
\end{equation}
Thus
\begin{equation}\label{op7}
\sum_{j=1}^\infty\ep_j<\infty.
\end{equation}
Let
\begin{equation}\label{op8}
\delta_j=\ep J^\sigma\ep_j^\sigma r_j^{n-\sigma(\b-2)}
\end{equation}
where $J=J(n)>0$ is a constant to be specified later. By \eqref{op6},
$\delta_j\le \ep J^\sigma2^{-\sigma j}$ and hence
\begin{equation}\label{op9}
\sum_{j=1}^\infty\delta_j<\infty.
\end{equation}
Define sequences $\{M_j\}$ and $\{N_j\}$ by
\begin{equation}\label{op10}
M_j=\frac{\ep_j}{r_j^n}\quad\text{and}\quad N_j=\frac{\delta_j}{r_j^n}.
\end{equation}
Then by \eqref{op5} and \eqref{op6},
\begin{equation}\label{op11}
M_j=\frac{\ep}{(2|x_j|)^{\la(\a-2)}}<\frac{\ep}{|x|^{\la(\a-2)}}
\quad\text{for } |x-x_j|<r_j.
\end{equation}
Let $\psi:\R^n\to[0,1]$ be a $C^\infty$ function such that $\psi=0$ in
$\R^n\setminus B_1(0)$ and $\psi(0)=1$. Define $\psi_j:\R^n\to[0,1]$
by $\psi_j(y)=\psi(\eta)$ where $y=x_j+r_j\eta$. Then
 \begin{equation}\label{op12}
 \psi_j(x_j)=1\quad\text{and }\quad
 \intl_{\mathbb{R}^n} \psi_j (y)\,dy=\intl_{\mathbb{R}^n} \psi(\eta)r^{n}_{j} \,d\eta=r^{n}_{j}I
 \end{equation}
 where $I=\intl_{\mathbb{R}^n}\psi(\eta)\,d\eta>0$.

Define $f,g:\R^n\setminus\{0\}\to[0,\infty)$ by
\[
f=\sum_{j=1}^\infty M_j\psi_j \quad\text{and} \quad g=\sum_{j=1}^\infty N_j\psi_j.
\]
Since the functions $\psi_j$ have disjoint supports, $f,g\in C^\infty
(\R^n \backslash\{0\})$ and by \eqref{op12} and \eqref{op10}
we have
\[
\intl_{\R^n}f(y)\,dy=I\sum_{j=1}^\infty\ep_j\quad\text{and}\quad
\intl_{\R^n}g(y)\,dy=I\sum_{j=1}^\infty\delta_j.
\]
Thus, by \eqref{op7} and \eqref{op9}, we see that $f,g\in L^1(\R^n)$.

From \eqref{op12} and \eqref{op11} we have
\[
|x_j|^{\la(\a-2)}f(x_j)=M_j|x_j|^{\la(\a-2)}=\frac{\ep}{2^{\la(\a-2)}}
\]
and
\[
|x|^{\la(\a-2)}f(x)\le M_j|x|^{\la(\a-2)}<\ep \quad\text{for }|x-x_j|<r_j.
\]
Thus $f$ satisfies \eqref{op3} and the first line of
\eqref{op2}. (Note that we only need to check \eqref{op2} holds in
$\cup_{j=1}^\infty B_{r_j}(x_j)$ because elsewhere $f=g=0$.)

For $x=x_j +r_j \xi$ and $|\xi|<1$ we have
 \begin{align*}
   \Bigg(\intl_{|y|<\ep} &\frac{f(y)\,dy}{|x-y|^{\b-2}}\Bigg)^\sigma
 \geq  \left(    \intl_{|y-x_j
    |<r_j}
\frac{M_j \psi_j (y)\,dy}{|x-y|^{\b-2}}   \right)   ^\sigma
  =\left(   \intl_{|\eta|<1}
\frac{M_j \psi(\eta)r^{n}_{j}\,d\eta}{r^{\b-2}_{j} |\xi-\eta|^{\b-2}}
\right)   ^\sigma\\
  &=\left(  \frac{\varepsilon_j}{r^{\b-2}_j} \intl_{|\eta|<1}
\frac{\psi(\eta)\,d\eta}{|\xi-\eta|^{\b-2}}   \right)  ^\sigma\\
  &\geq \left(\frac{J\varepsilon_j}{r^{\b-2}_{j}}\right)^\sigma \quad\text{ where }J=\min_{|\xi|\leq 1} \intl_{|\eta|<1} \frac{\psi(\eta)\,d\eta}{|\xi-\eta|^{\b-2}}>0\\
&=\frac{1}{\ep}\frac{\delta_j}{r_j^n}=\frac{1}{\ep}N_j\ge\frac{1}{\ep}g(x)
\end{align*}
by \eqref{op8}. Thus the second line of \eqref{op2} holds.

Finally, by \eqref{op8} and \eqref{op5},
\begin{align*}
g(x_j)&=N_j=\frac{\delta_j}{r_j^n}=\frac{C\ep_j^\sigma}{r_j^{\sigma(\b-2)}}\\
&=\frac{C\ep_j^\sigma}{\ep_j^{\sigma(\b-2)/n}|x_j|^{\la(\a-2)\sigma(\b-2)/n}}\\
&=\frac{C\ep_j^{\sigma(n+2-\b)/n}}{|x_j|^{\la(\a-2)\sigma(\b-2)/n}}
=\frac{C\sqrt{h(|x_j|)}}{|x_j|^{\la(\a-2)\sigma(\b-2)/n}}
\end{align*}
which gives \eqref{op4}.

\section{Proof of Theorem \ref{large-fg}}\label{sec-large-fg}

The functions $f$ and $g$ that we construct will satisfy not just
\eqref{A2} and \eqref{op2} but also
\begin{equation}\label{A5}
f=g=0\quad\text{in }\R^n\setminus B_\ep(0).
\end{equation}
If $f$ and $g$ satisfy \eqref{A2}, \eqref{op2}, and \eqref{A5} then
they also satisfy \eqref{op2} for any larger value of $\sigma$. Hence
we can assume without loss of generality that
\begin{equation}\label{A6}
\sigma<\frac{n}{\b-2}.
\end{equation}

Let $\{x_j\}\subset\R^n$ and $\{\ep_j\}\subset (0,1)$ be sequences in such that
\[
0<4|x_{j+1}|<|x_j|<\varepsilon/2
\]
and
\begin{equation}\label{A7}
\sum_{j=1}^\infty\ep_j<\infty.
\end{equation}
Choose $r_j\in(0,|x_j|/2)$ such that
\[
N_j:=\ep\left(\frac{J\ep_j}{r_j^{\b-2}}\right)^ \sigma\ge h(|x_j|)^2, \qquad
M_j:=\frac{\ep_j}{r_j^n}\ge h(|x_j|)^2,
\]
\[
\delta_j:=\ep J^\sigma\ep_j^\sigma r_j^{n-\sigma(\b-2)}<2^{-j},
\]
and
\[
r_j^{\la(\b-2)[\sigma-(\frac{n+2-\a}{\b-2}+\frac{n}{\b-2}\frac{1}{\la})]}
\le \ep^{\la+1}J^{\la(\sigma+1)}\ep_j^{\la\sigma-1}
\]
where $J=J(n)>0$ is a constant to specified later. (This is possible
because the exponents on $r_j$ in all of these conditions are positive
by \eqref{A1} and \eqref{A6}.) Then
\begin{equation}\label{A8}
h(|x_j|)^2\le
N_j=\frac{\delta_j}{r_j^n}=\ep\left(\frac{J\ep_j}{r_j^{\b-2}}\right)^\sigma,
\end{equation}
\begin{equation}\label{A9}
h(|x_j|)^2\le
M_j=\frac{\ep_j}{r_j^n}\le \ep\left(\frac{J\delta_j}{r_j^{\a-2}}\right)^\la,
\end{equation}
and
\begin{equation}\label{A10}
\sum_{j=1}^\infty\delta_j<\infty.
\end{equation}
Define $\psi$, $\psi_j$, $I$, $J$, $f$, and $g$ as in the proof of
Theorem \ref{optimal-fg}. Then $f$ and $g$ satisfy
\eqref{A5}. Moreover, using \eqref{A7}--\eqref{A10}, we see as in the
proof of Theorem \ref{optimal-fg} that \eqref{A2} holds,
\[
\left(\intl_{|y|<\ep} \frac{f(y)\,dy}{|x-y|^{\b-2}}\right)^\sigma
 \ge \frac{1}{\ep}g(x)\quad\text{for } x\in\R^n\setminus\{0\}
\]

and
\[
\left(\intl_{|y|<\ep} \frac{g(y)\,dy}{|x-y|^{\a-2}}\right)^\la
 \ge \frac{1}{\ep}f(x)\quad\text{for } x\in\R^n\setminus\{0\}.
\]
Thus $f$ and $g$ satisfy \eqref{op2}. Also
\[
f(x_j)=M_j\ge h(|x_j|)^2 >>h(|x_j|)\quad\text{as }j\to\infty
\]
and
\[
g(x_j)=N_j\ge h(|x_j|)^2>>h(|x_j|)\quad\text{as }j\to\infty.
\]
Hence \eqref{A3} and \eqref{A4} hold.

\medskip

\noindent{\bf Acknowledgement.} The authors would like to thank
Stephen J. Gardiner for helpful discussions.


\begin{thebibliography}{10}



\bibitem{BR1996} M.-F. Bidaut-V\'eron and Th. Raoux, Asymptotics of solutions
  of some nonlinear elliptic systems, Comm. Partial Differential
  Equations 21 (1996), 1035--1086.

\bibitem{BL1981} H. Brezis and P-L. Lions, A note on isolated singularities for linear elliptic equations, Mathematical analysis and applications, Part A, pp. 263-266, Adv. in Math. Suppl. Stud., 7a, Academic Press, New York-London, 1981.

\bibitem{CDM2008} G. Caristi, L. D'Ambrosio and E. Mitidieri,
Representation formulae for solutions to some classes of higher
order systems and related Liouville theorems, Milan J. Math. 76
(2008), 27-67.

\bibitem{CL2005} W. Chen, C. Li and  B. Ou, Classification of solutions for a system of integral equations,
Comm. Partial Differential Equations 30 (2005),  59-65.

\bibitem{CL2009} W. Chen and C. Li, An integral system and the Lane-Emden conjecture,
Discrete Contin. Dyn. Syst. 24 (2009),  1167-1184.

\bibitem{DA2010} J. T. Devreese and A. S. Alexandrov, Advances in polaron physics, Springer Series in Solid-State Sciences, vol. 159, Springer, 2010.

\bibitem{GMT2011} M. Ghergu, A. Moradifam and S.D. Taliaferro, Isolated singularities of polyharmonic inequalities,
J. Functional Anal. 261 (2011), 660-680.


\bibitem{GTV2014} M. Ghergu, S.D. Taliaferro and  I.E. Verbitsky, Pointwise bounds and blow-up for systems of semilinear elliptic inequalities at an isolated singularity via nonlinear potential estimates, http://arxiv.org/abs/1402.0113

\bibitem{GT1983} D. Gilbarg and N.S. Trudinger, Elliptic Partial Differential Equations of Second Order, 2nd edition, Springer-Verlag, Berlin, 1983.


\bibitem{HM1972} V.P. Havin and V.G. Maz'ya, Nonlinear potential theory, Usp. Mat. Nauk 27 (1972), 67-138 (in Russian). English translation: Russ. Math. Surv. 27 (1972), 71-148.

\bibitem{H1972} L.I. Hedberg, On certain convolution inequalities, Proc. Amer. Math. Soc. 36 (1972), 505-510.

\bibitem{JL2006} C. Jin and C. Li,
Quantitative analysis of some system of integral equations,
Calc. Var. Partial Differential Equations 26 (2006), 447-457.


\bibitem{J1995} K.R.W. Jones, Newtonian quantum gravity, Australian Journal of Physics 48 (1995),  1055-1081.

\bibitem{Leia2013} Y. Lei, On the regularity of positive solutions of a class of Choquard type equations, Math. Z. 273 (2013), 883-905.

\bibitem{Leib2013} Y.  Lei, Qualitative analysis for the static Hartree-type equations, SIAM J. Math. Anal. 45 (2013), 388-406.


\bibitem{LLM2012} Y. Lei, C. Li and  C. Ma, Asymptotic radial symmetry and growth estimates of positive solutions to weighted Hardy-Littlewood-Sobolev system of integral equations,
Calc. Var. Partial Differential Equations 45 (2012),  43-61.

\bibitem{L1976} E.H. Lieb, Existence and uniqueness of the minimizing solution of Choquard's nonlinear equation, Studies in Appl. Math. 57 (1976/77), 93-105.

\bibitem{MPT1998}  I.M. Moroz, R. Penrose and  P. Tod, Spherically-symmetric solutions of the Schro\"odinger-Newton equations, Classical Quantum Gravity 15 (1998), 2733-2742.


\bibitem{P1954} S. Pekar, Untersuchung \"uber die Elektronentheorie der Kristalle, Akademie Verlag, Berlin, 1954.

\bibitem{stein} E.M. Stein, Singular Integrals and Differentiability Properties of Functions, Princeton University Press, 1970.

\bibitem{S2009} Ph. Souplet, The proof of the Lane-Emden conjecture in
  four space dimensions,  Adv. Math. 221 (2009), 1409--1427.

\bibitem{T2007} S.D. Taliaferro, Isolated singularities of nonlinear
  parabolic inequalities, Math. Ann. 338 (2007), 555-586.

\bibitem{T2011} S.D. Taliaferro, Initial blow-up of solutions of semilinear parabolic inequalities, J. Differential Equations 250 (2011), 892-928.

\end{thebibliography}
\end{document}